\theoremstyle{plain}
\newtheorem{theorem}[equation]{Theorem}
\newtheorem{lemma}[equation]{Lemma}
\newtheorem{conjecture}[equation]{Conjecture}
\newtheorem{proposition}[equation]{Proposition}
\newtheorem{corollary}[equation]{Corollary}
\theoremstyle{definition}
\newtheorem{definition}[equation]{Definition}
\newtheorem{example}[equation]{Example}
\newtheorem{remark}[equation]{Remark}
\numberwithin{equation}{section}
\newcommand{\Z}{\mathbb Z}
\newcommand{\C}{\mathbb C}
\newcommand{\Gr}{\mathrm{Gr}}
\newcommand{\jp}{\mathrm{jp}}
\newcommand{\word}{\mathrm{word}}
\newcommand{\nc}{\mathcal{NC}}
\newcommand{\ncop}{\mathcal{NCOP}}
\newcommand{\rstab}{\mathcal{J}}
\newcommand{\rstabgen}{\overline{\mathcal{J}}}
\newcommand{\rspoly}{\mathrm{J}}
\newcommand{\inv}{\mathrm{inv}}
\newcommand{\sgn}{\mathrm{sgn}}
\newcommand{\setpart}{\Pi}
\newcommand{\ordsetpart}{\mathcal{OP}}
\DeclareMathOperator{\codim}{\mathrm{codim}}
\DeclareMathOperator{\Proj}{\mathrm{Proj}}
\newcommand{\SL}{\mathrm{SL}}
\definecolor{darkblue}{rgb}{0.0,0,0.7}
\newcommand{\newword}[1]{\textcolor{darkblue}{\textbf{\emph{#1}}}}
\title{Web invariants for flamingo Specht modules}
\author{Chris Fraser}
\address[CF]{\parbox{\linewidth}{Dept.\ of Mathematics, Michigan State University, East Lansing, MI, 48824, USA}}
\email{\parbox[t]{\linewidth}{cmfra@umich.edu}}
\author{Rebecca Patrias}
\address[RP]{\parbox{\linewidth}{Dept.\ of Mathematics, University of St.\ Thomas, St.\ Paul, MN, 55105, USA}}
\email{\parbox[t]{\linewidth}{rebecca.patrias@stthomas.edu}}
\author{Oliver Pechenik}
\address[OP]{\parbox{\linewidth}{Dept.\ of Combinatorics \& Optimization, University of Waterloo, Waterloo, ON, N2L 3G1, Canada}}
\email{\parbox[t]{\linewidth}{oliver.pechenik@uwaterloo.ca}}
\author{Jessica Striker}
\address[JS]{\parbox{\linewidth}{Dept.\ of Mathematics, North Dakota State University, Fargo, ND, 58108, USA}}
\email{\parbox[t]{\linewidth}{jessica.striker@ndsu.edu}}
\begin{document}

\begin{abstract}
Webs yield an especially important realization of certain Specht modules, irreducible representations of symmetric groups, as they provide a pictorial basis with a convenient diagrammatic calculus. In recent work, the last three authors associated polynomials to noncrossing partitions without singleton blocks, so that the corresponding polynomials form a web basis of the pennant Specht module $S^{(d,d,1^{n-2d})}$. These polynomials were interpreted as global sections of a line bundle on a $2$-step partial flag variety. 

Here, we both simplify and extend this construction. On the one hand, we show that these polynomials can alternatively be situated in the homogeneous coordinate ring of a Grassmannian, instead of a $2$-step partial flag variety, and can be realized as tensor invariants of classical (but highly nonplanar) tensor diagrams. On the other hand, we extend these ideas from the pennant Specht module $S^{(d,d,1^{n-2d})}$ to more general flamingo Specht modules $S^{(d^r,1^{n-rd})}$. In the hook case $r=1$, we obtain a spanning set that can be restricted to a basis in various ways. In the case $r>2$, we obtain a basis of a well-behaved subspace of $S^{(d^r,1^{n-rd})}$, but not of the entire module.
\end{abstract}

\maketitle

\section{Introduction}\label{sec:intro}

The irreducible representations of the symmetric group $\mathfrak{S}_n$ are the \emph{Specht modules} $S^\lambda$, indexed by integer partitions $\lambda = (\lambda_1 \geq \lambda_2 \geq \dots \geq \lambda_k > 0)$. There are a variety of ways to construct these modules concretely, each with various pros and cons. One potential virtue of a construction of $S^\lambda$ is that it yields a natural choice of basis with useful properties. Our interest is in obtaining \emph{web bases}, extending seminal work of G.~Kuperberg \cite{Kuperberg}. Various authors differ in precisely what properties they expect a web basis to satisfy. For us, a web basis is one such that 
\begin{enumerate}[label=\textrm{(W.\arabic*)}]
    \item each basis element is indexed by a planar diagram with $n$ boundary vertices, embedded in a disk;\label{W_diagram}
    \item there is a topological criterion allowing identification of basis diagrams; \label{W_alg}
    \item the long cycle $c = (12\ldots n)$ acts on the basis by rotation of diagrams (up to signs); \label{W_c}
    \item \label{W_w0} the long element $w_0 = n(n-1) \ldots 1$ acts on the basis by reflection of diagrams (up to signs); and 
    \item there is a finite list of `skein relations' describing the action of a simple transposition $s_i$ on a basis diagram. \label{W_skein}
\end{enumerate}
Web bases satisfying all these properties are not known for general partitions $\lambda$. (Bases indexed by planar diagrams appear in \cite{Fontaine,Westbury,Elias}; however, these bases do not satisfy properties \ref{W_alg}, \ref{W_c}, or \ref{W_w0}, and indeed in some cases are not even well-defined, depending on arbitrary choices.) For further results on the combinatorics of web bases for particular Specht modules, see, e.g., \cite{Russell.Tymoczko:sl2, Rhoades:polytabloid,Russell.Tymoczko:sl3,Im.Zhu,Hwang.Jang.Oh,Purbhoo.Wu}. For connections to cluster algebras, see, e.g., \cite{Fomin.Pylyavskyy,Fraser.Pylyavskyy}. We briefly review the supply of known web bases, in the sense of properties \ref{W_diagram}--\ref{W_skein}.

\begin{itemize}
    \item 
For $\lambda = (d,d)$, a web basis is given by the set of noncrossing perfect matchings of $n$ vertices. That is, if we place vertices labeled $1, \dots, n$ around the boundary of a disk, web basis elements correspond to ways to pair the vertices by pairwise nonintersecting arcs embedded in the disk. Various aspects of this web basis were known to various authors at various times; see, in particular, \cite{Kung.Rota, Kuperberg, Patrias.Pechenik:evacuation, Petersen.Pylyavskyy.Rhoades, Rhoades:thesis, Temperley.Lieb} for discussion.

\item For $\lambda =(d,d,d)$, a web basis was first constructed by Kuperberg \cite{Kuperberg}, who also established properties \ref{W_diagram}, \ref{W_alg}, and \ref{W_skein} for it. Property~\ref{W_c} was later proven in \cite{Petersen.Pylyavskyy.Rhoades}; property~\ref{W_w0} was first explicitly proven in \cite{Patrias.Pechenik:evacuation}, although known to experts earlier.

\item For $\lambda =(d,d,d,d)$, a web basis satisfying all five properties was recently constructed by the last two authors in joint work with C.~Gaetz, S.~Pfannerer, and J.~Swanson~\cite{Gaetz.Pechenik.Pfannerer.Striker.Swanson:SL4}.

\item For $\lambda = (2^k)$, the first author \cite{Fraser:2column} gave a web basis by showing that the \emph{dual canonical basis} of \cite{Lusztig} can be rendered diagrammatic in this case. Further discussion of this `$2$-column' web basis appears in \cite{Gaetz.Pechenik.Pfannerer.Striker.Swanson:2column}.

\item In \cite{Rhoades:skein} (building on combinatorics from \cite{Pechenik:CSP}), B.~Rhoades gave a web basis for the two-parameter family $\lambda = (d,d,1^{n-2d})$ of \emph{pennant} shapes, extending the noncrossing matching basis from the case $\lambda = (d,d)$. In this case, the planar diagrams are noncrossing set partitions without singleton blocks. This pennant web basis was realized in \cite{Kim.Rhoades} as a space of \emph{fermionic diagonal coinvariants}, and was realized by the last three authors \cite{Patrias.Pechenik.Striker} as a basis of {\sl jellyfish invariants} in the homogeneous coordinate ring of a $2$-step \emph{partial flag variety}. Yet another realization of the pennant web basis was given by \cite{Kim:embedding} as a submodule of an induction product built from the web basis of the case $\lambda = (d,d)$.
\end{itemize}

The goals of this paper are twofold. As our first main goal, we reinterpret the bases from the previous paragraph in terms of the traditional web diagram formalism which underlies the  rectangular cases. In \cite{Patrias.Pechenik.Striker}, the passage from a set partition (a combinatorial object) to a {\sl jellyfish invariant} (an algebraic object) was a {\sl definition}, ``pulled from thin air'' (see e.g. Definition~\ref{def:polynomial}). In the present paper, we work with a copy of the pennant Specht module in the homogeneous coordinate ring of a Grassmannian, rather than a 2-step partial flag variety. This transferal process from a partial flag variety to a Grassmannian is likely known to experts; however, it is difficult to find an explicit description in the literature, so we explain the details of this construction. We then associate to any set partition a certain tensor diagram whose corresponding tensor diagram invariant {\sl is} the jellyfish invariant from \cite{Patrias.Pechenik.Striker}. That is, the passage from jellyfish web to jellyfish invariant is the ``classical'' passage from tensor diagram to tensor diagram invariant. See Figures~\ref{fig:schematic} and \ref{fig:running_tensor} for an illustration of this construction. 

We see several advantages of this change in perspective. First, it becomes easy to verify that jellyfish invariants lie in the pennant Specht module (cf.~Theorem~\ref{thm:inspecht}), which was demonstrated by lengthy calculation in {\sl loc. cit.}. Second, it explains the ``origin'' of the signs in the definition of jellyfish tableaux, as these signs are ``forced'' by the tensor diagram formalism. Third, it allows for the use of classical ${\rm SL}_n$ skein relations and the classical Pl\"ucker relations as tools for proving algebraic identities involving jellyfish invariants. Let us point out, however, that the tensor diagrams arising from the Grassmmanian perspective are quite large and highly nonplanar.

As our second main goal, we extend these new constructions and those of \cite{Patrias.Pechenik.Striker} to the $3$-parameter family of Specht modules $S^{(d^r,1^{n-r\cdot d})}$. We call these \emph{flamingo} Specht modules, since their defining feature is the first long column (cf.~Figure~\ref{fig:flamingo}); pennant Specht modules are the case $r=2$. Outside of the pennant case, our invariants do not directly yield bases of the flamingo Specht module. Rather in the `hook case' $r=1$, we obtain a spanning set of diagrams and invariants. In Section~\ref{sec:hook}, we explain a recipe for choosing linearly independent subsets of this hook basis; however, it is not possible to do so while maintaining property \ref{W_c}. In the remaining case $r>2$, we obtain a linearly independent set of diagrams and invariants, but we do not know a systematic way to extend this set to a basis. Nonetheless, the subspace spanned by these diagrams exhibits various nice properties. For example, it is invariant under the actions of $c$ and $w_0$, with those elements acting by rotation and reflection as in \ref{W_c} and \ref{W_w0}.

\begin{figure}[ht]
    \centering
    \raisebox{1.7in}{\ydiagram{6,6,6,1,1,1,1,1}}
    \includegraphics[scale=0.05,trim={0 20cm 10cm 15cm},clip]{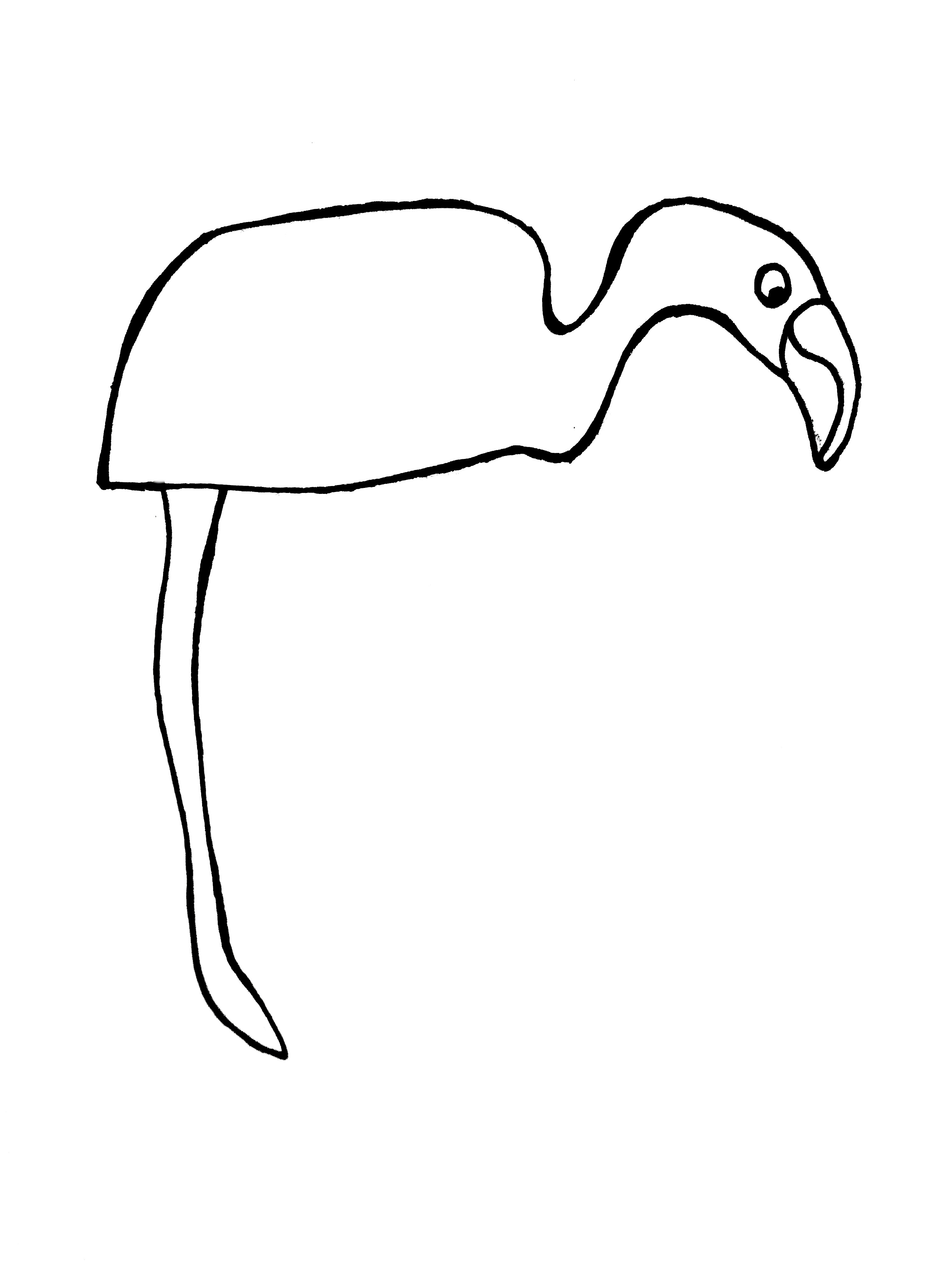}
    \caption{The Young diagram (left) of the flamingo partition $(6^3,1^5)$ and a flamingo (right) blending in.}
    \label{fig:flamingo}
\end{figure}

{\bf This paper is organized as follows.} In Section~\ref{sec:background}, we recall necessary background material. In Section~\ref{sec:invariants}, we extend the definition of \emph{jellyfish tableaux} and \emph{jellyfish invariants}, introduced in \cite{Patrias.Pechenik.Striker}, from $r=2$ to general $r$. Jellyfish invariants are polynomials that we associate to ordered set partitions. In Section~\ref{sec:GrassmannCayley}, we use ideas about Grassmann--Cayley algebras and tensor diagrams to realize jellyfish invariants as elements of the homogeneous coordinate ring of a Grassmannian. We then observe that jellyfish invariants are elements of flamingo Specht modules $S^{(d^r,1^{n-r\cdot d})}$. In Section~\ref{sec:diagrammatics}, we develop a diagrammatic calculus for ordered set partitions and the corresponding jellyfish invariants. Our key technical tool is a $(2^r+1)$-term recurrence relation established in Section~\ref{sec:5term}. Using this recurrence, we establish skein relations for jellyfish invariants, as well as diagrammatic characterizations of the action of $c$ and $w_0$. In Theorem~\ref{thm:linind}, we show that jellyfish invariants are linearly independent in the cases $r>1$.  Section~\ref{sec:hook} is devoted to the distinctive case $r=1$. Finally, Section~\ref{sec:final} contains some remarks on further relations among jellyfish invariants and a conjectural extension of Theorem~\ref{thm:linind}.

\section{Background}\label{sec:background}
\subsection{Ordered set partitions}\label{sec:set_partitions}
We write $[n] = \{1, 2, \dots, n\}$. An \newword{ordered set partition} of $n$ is a sequence $\pi = (\pi_1, \pi_2, \dots, \pi_d)$ of sets where 
\begin{itemize}
    \item each $\pi_i \neq \emptyset$,
    \item $\bigcup_{i} \pi_i = [n]$, and
    \item $\pi_i \cap \pi_j = \emptyset$ if $i \neq j$.
\end{itemize}
We write such an ordered set partition as 
$(\pi_1 \mid \pi_2 \mid \dots \mid \pi_d)$ for visual distinctiveness. The sets $\pi_i$ are called the \newword{blocks} of the ordered set partition. If we forget the ordering of the blocks in an ordered set partition, the result is an \newword{(unordered) set partition} 
$\{\pi_1, \dots, \pi_d \}$.

We draw a set partition $\pi = \{\pi_1, \dots, \pi_d \}$ of $n$ by placing dots labeled $1, 2, \dots n$ clockwise around the boundary of a disk and then, for each $\pi_i$, drawing the convex hull of the boundary dots whose labels are in $\pi_i$. If these convex hulls do no intersect, we call the set partition a \newword{noncrossing set partition}. We use $\nc(n,d,r)$ to denote the set of a noncrossing set partitions of $n$ with $d$ blocks and blocks size at least $r$.

Let $\ordsetpart(n,d,r)$ denote the set of ordered set partitions of $n$ with $d$ blocks and blocks of size at least $r$. For example, $(2~5~6\mid 3 \mid 1~4)$ and $(3\mid 2~5~6 \mid 1~4)$ are distinct ordered set partitions in $\ordsetpart(6,3,1)$, but they are not distinct when viewed as unordered set partitions. Note that if $r \leq r'$, then $\ordsetpart(n,d,r') \subseteq \ordsetpart(n,d,r)$. We write $\ncop(n,d,r)$ for the noncrossing subset of $\ordsetpart(n,d,r)$.

\begin{figure}[ht]
\includegraphics[scale=.9]{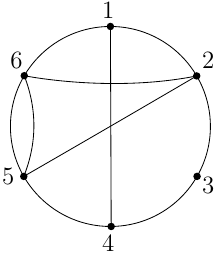}
\caption{A visual depiction of the ordered set partition $(2~5~6\mid 3 \mid 1~4)$ considered in Section~\ref{sec:set_partitions}. In this picture, the ordering of the blocks is not recorded. In later examples, we will occasionally use color-coding to illustrate the intended ordering, as needed.}
\label{fig:setpartition}
\end{figure}

\subsection{Grassmannians and exterior algebras}\label{sec:Grassmann}
The \newword{Grassmannian} $\Gr(k,n)$ is the parameter space of $k$-dimensional complex vector subspaces of $\C^n$. (In this paper, we will mostly be interested in the case of $\Gr(n,2n)$.) We realize $\Gr(k,n)$ as a smooth projective variety as follows. Consider a $k \times n$ matrix of distinct indeterminates 
\[
M = \begin{bmatrix}
    x_{11} & x_{12} & \dots & x_{1n}  \\
    x_{21} & x_{22} & \dots & x_{2n} \\
    \vdots & \vdots & \ddots & \vdots \\
    x_{k1} & x_{k2} & \dots & x_{kn}
\end{bmatrix}
\]
and let $\C[M]$ be the polynomial ring in these $kn$ variables. Let $R$ be the subring generated by all of the $k \times k$ minors of $M$. Then $R$ is the homogeneous coordinate ring of $\Gr(k,n)$, meaning that we can take $\Gr(k,n) \coloneqq \Proj R$. 

The action of $\SL_k(\C)$ on $M$ by left multiplication induces an action of $\C[M]$. By the Fundamental Theorem of Invariant Theory, the subring $R$ consists exactly of those polynomials in $\C[M]$ that are invariant under this $\SL_k(\C)$ action.

It is often convenient to replace the ring~$R$ by an isomorphic ring~$R'$. Here, $R' = \C[x_{\Delta} : \Delta \in \binom{[n]}{k}] / {\sim}$, where $\binom{[n]}{k}$ denotes the collection of $k$-element subsets of $\{1, 2, \dots, n\}$, each $x_\Delta$ stands in for the corresponding degree $k$ generator of $R$, and ${\sim}$ is the ideal corresponding to the relations among the generating minors of $R$. In this language, the indeterminates $x_\Delta$ are called \newword{Pl\"ucker variables} and elements of ${\sim}$ are called \newword{Pl\"ucker relations}. We will treat these two perspectives interchangeably. 

\subsection{Specht modules}
\label{sec:Spechtbackground}

Consider a $\Z^n$-grading on $\C[M]$, where each variable $x_{ij}$ has degree $\mathbf{e}_j$, where $\mathbf{e}_j$ is the $j$th standard basis vector of $\Z^n$. We are especially interested in the vector subspace $S$ of $R$ spanned by polynomials of multihomogeneous degree $(1,1, \dots, 1) = \sum_{j = 1}^n \mathbf{e}_j$. Note that $S$ consists exactly of multilinear $\SL_k$-invariant functions of the columns of $M$. We will assume that $n = dk$ for some integer $d$, so that $S$ is nontrivial.

It is well-known that $S$ is a finite-dimensional complex vector space whose dimension is given by the \emph{hook-length formula} for \emph{standard Young tableaux} of shape $(d^k)$. Precisely, 
\[
\dim S = \prod_{i=1}^{k} \frac{(d+k-i)!}{(k-i)!}.
\]

The symmetric group $\mathfrak{S}_n$ acts on $M$ by permuting columns, and hence on $\C[M]$. Note that $S$ is an invariant subspace for this action. In fact, it is irreducible as an $\mathfrak{S}_n$-module and is the \newword{Specht module} $S^{(d^k)}$.

In general, there is a Specht module $S^\lambda$ for every integer partition $\lambda = (\lambda_1 \geq \lambda_2 \geq \dots 0)$ with $\sum_i \lambda_i = n$; moreover, every irreducible complex representation of $\mathfrak{S}_n$ is isomorphic to exactly one of these Specht modules. Here, we give an explicit but terse construction of general Specht modules along the lines of the construction of $S^{(d^k)}$ above; for more details of this approach, see \cite[$\mathsection 2.3$]{Patrias.Pechenik.Striker}, while for more standard (and more thorough) textbook treatments, see, e.g., \cite{Fulton.Harris,Fulton:YoungTableaux,Sagan}. 

Let $\lambda$ be any partition of $n$ and let $\mu$ be the partition whose Young diagram is the transpose of that of $\lambda$. For example, if $\lambda = (d^r,1^{n - r \cdot d})$, then $\mu  = (n - (r-1) \cdot d, r^{d-1})$. Suppose that 
\[
\mu = (\mu_1 \geq \mu_2 \geq \dots \geq \mu_\ell > 0).
\]
Let $M$ be the matrix of indeterminates from Section~\ref{sec:Grassmann} with $k = \mu_1$. Consider the polynomial ring $\mathbb{C}[M]$ in those $n \cdot \mu_1$ variables. For each $\mu_i$, let $\mathsf{S}_i$ denote the set of sequences $\mathbf{j} = (j_1, j_2, \dots, j_{\mu_i})$ with $1 \leq j_1 < j_2 < \dots < j_{\mu_i}$. (Note that if $\mu_i = \mu_{i'}$, then $\mathsf{S}_i = \mathsf{S}_{i'}$.) For $\mathbf{j} \in \mathsf{S}_i$, let $p_{\mathbf{j}} \in \mathbb{C}[M]$ denote the polynomial that is the $\mu_i \times \mu_i$ minor of $M$ involving the top $\mu_i$ rows and the columns indexed by the sequence $\mathbf{j}$. Now, consider the set $\mathsf{S}$ of tuples of sequences $(\mathbf{j}_1, \dots, \mathbf{j}_\ell)$ such that each $\mathbf{j}_i \in \mathsf{S}_i$ and the sets $\mathbf{j}_i$ partition $[n]$ (that is, they are pairwise disjoint with union $[n]$).
For each $(\mathbf{j}_1, \dots, \mathbf{j}_\ell) \in \mathsf{S}$, let \[
p_{(\mathbf{j}_1, \dots, \mathbf{j}_\ell)} \coloneqq \prod_{i=1}^\ell p_{\mathbf{j}_i}.
\]
The vector span of the polynomials $p_{(\mathbf{j}_1, \dots, \mathbf{j}_\ell)}$ for $(\mathbf{j}_1, \dots, \mathbf{j}_\ell) \in \mathsf{S}$ is the Specht module $S^\lambda$. Here, the symmetric group $\mathfrak{S}_n$ is acting by permuting the columns of $M$.

The construction above naturally identifies $S^\lambda$ with a slice of the multihomogeneous coordinate ring of a partial flag variety, extending the construction above of $S^{(d^k)}$ inside the homogeneous coordinate ring of a Grassmannian; for more details, see \cite[$\mathsection 2.3$]{Patrias.Pechenik.Striker}.
One of our main results in Section~\ref{sec:GrassmannCayley} will be a new, and somewhat mysterious, construction of the Specht module $S^{(d^r,1^{n-r\cdot d})}$ inside the homogeneous coordinate ring of the Grassmannian $\Gr(n,2n)$.

\section{Jellyfish tableaux and  invariants}
\label{sec:invariants}
In this section, we define a polynomial associated to each ordered set partition in $\ordsetpart(n,d,r)$; these polynomials will be our web invariants. Although an ordered set partition $\pi \in \ordsetpart(n,d,r)$ is also an element of $\ordsetpart(n,d,r')$ for any $r' \leq r$, the associated web invariant depends on the specified $r$. When $r$ is even, the invariants we define will be independent of the ordering of the blocks; that is to say, they depend only on the underlying unordered set partition. When $r$ is odd, the ordering of the blocks changes the invariant by a predictable global sign, which we discuss in Lemma~\ref{lem:row_col_swap}. For the case $r=2$, our invariants recover those introduced in \cite{Patrias.Pechenik.Striker}.

We begin by defining a set of tableaux that we will use to construct polynomials associated to ordered set partitions. Because the shape consists of $r$ full rows of boxes followed by one box in each additional row and thus resembling jellyfish, we call these $r$-\emph{jellyfish tableaux} (cf.\ Example~\ref{ex:RStableaux}).

\begin{figure}[ht]
\raisebox{1.4in}{\begin{ytableau}
1 & 3 & 5 & 2\\
4 & 8 & 6 & 7\\
12 & 10 & 9 & 11\\
\none & 16 & \none \\
\none & \none & 13 & \none  \\
15 & \none & \none\\
\none & \none & \none & 14
\end{ytableau}}\hspace{.65in}
\includegraphics[scale=0.25]{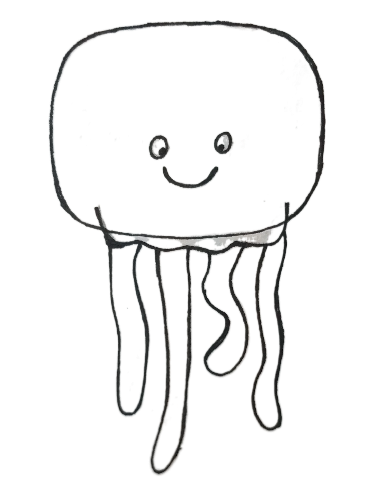}
\caption{A jellyfish tableau and a cute happy jellyfish.}\label{fig:cute_jellyfish}
\end{figure}

\begin{definition}
\label{def:rstab}
Given an ordered set partition $\pi=(\pi_1\mid \pi_2\mid \ldots\mid \pi_d)\in\ordsetpart(n,d,r)$, let $\rstab_r(\pi)$ be the set of generalized tableaux $T_{ij}$ (in English notation with matrix indexing) with $d$ columns (so $1\leq j\leq d$) and $n-(d-1)r$ rows ($1\leq i\leq n-(d-1)r$) obeying the following constraints:
\begin{enumerate}
\item $T_{ij}\in[n]$ or $T_{ij}$ is empty.
\item If $i\in[r]$, $T_{ij}$ is nonempty.
\item If $i>r$, there exists exactly one $j$ such that $T_{ij}$ is nonempty.
\item The nonempty entries in column $j$ are exactly the elements of $\pi_j$, in increasing order.
\end{enumerate}
Call $\rstab_r(\pi)$ the set of \newword{$r$-jellyfish tableaux} for $\pi$.
\end{definition}

\begin{definition}
\label{def:inv}
For $w \in \mathfrak{S}_n$, the \newword{inversion number} $\inv(w)$ is the number of \newword{inversions} in $w$, i.e., pairs $i<j$ such that $w(i) > w(j)$. The \newword{sign} of $w$ is $\sgn(w)=(-1)^{\inv(w)}$.

For $T\in \rstab_r(\pi)$, we define its \newword{inversion number} $\inv(T)$ to be the number of inversions in its row reading word (left to right, top to bottom) and its \newword{sign} to be the sign of its row reading word.
\end{definition}

Note that $i < j$ form an inversion of $T$ if and only if either $j$ appears in a higher row than $i$ or else $j$ appears left of $i$ in the same row.

The following lemma describes how the signs of $r$-jellyfish tableaux change under a permutation of the blocks of the corresponding ordered set partition. Note the sign is preserved when $r$ is even.

\begin{lemma}\label{lem:colswap}
Given an ordered set partition $\pi=(\pi_1\mid \pi_2\mid \ldots\mid \pi_d)\in\ordsetpart(n,d,r)$, a permutation $\sigma\in \mathfrak{S}_d$ induces a bijection between $\rstab_r(\pi)$ and $\rstab_r(\sigma(\pi))$, where $\sigma(\pi)_i = \pi_{\sigma^{-1}(i)}$. In this bijection,
$T\in\rstab_r(\pi)$ maps to $T'\in\rstab_r(\sigma(\pi))$ by permuting columns according to $\sigma$. Then \[\sgn(T)=\sgn(\sigma)^r\sgn(T').\]
\end{lemma}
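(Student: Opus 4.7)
The plan is to exhibit the bijection explicitly and then track the inversion count of the row reading word under it. I would define the map $T \mapsto T'$ so that column $i$ of $T'$ equals column $\sigma^{-1}(i)$ of $T$; equivalently, column $j$ of $T$ is moved to position $\sigma(j)$ in $T'$. Since column $\sigma^{-1}(i)$ of $T$ lists the elements of $\pi_{\sigma^{-1}(i)} = \sigma(\pi)_i$ in increasing order, the resulting tableau satisfies conditions~(1)--(4) of Definition~\ref{def:rstab}, so $T' \in \rstab_r(\sigma(\pi))$. The inverse map uses $\sigma^{-1}$, so this is indeed a bijection.

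For the sign, let $a_{i,j}$ denote the $(i,j)$ entry of $T$ for $1 \leq i \leq r$. Then row $i$ of $T'$ reads $(a_{i,\sigma^{-1}(1)}, a_{i,\sigma^{-1}(2)}, \ldots, a_{i,\sigma^{-1}(d)})$, namely the $T$-row reordered by $\sigma^{-1}$. Each row $i > r$ of $T$ contains a single entry, which by construction appears in the same row $i$ of $T'$ (only the column changes). Consequently the ``tail'' of the reading word, meaning rows $r+1$ onward, is the same sequence for $T$ and $T'$, so the entire discrepancy is confined to the top $r$ rows.

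Now classify pairs of positions in the reading word into three types: (A) both in the top $r$ rows, (B) one in top and one in tail, and (C) both in tail. Type (C) contributions are clearly equal. For type (B), fix a top row $i$ and a tail position with value $v'$: the number of columns $j$ with $a_{i,j} > v'$ is unchanged after the reindexing $j \mapsto \sigma(j)$, so type (B) counts also match. For type (A) pairs in two distinct top rows $i < i'$, an analogous double reindexing in both column indices shows the pairwise count is unchanged. Hence the only genuine sign change comes from type (A) pairs within a single top row. Applying the standard fact that permuting a sequence of distinct values by $\tau$ multiplies its sign by $\sgn(\tau)$, each of the $r$ top rows flips its contribution to the overall sign by a factor $\sgn(\sigma^{-1}) = \sgn(\sigma)$, giving a total sign change of $\sgn(\sigma)^r$. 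This yields $\sgn(T') = \sgn(\sigma)^r \sgn(T)$, which, since $\sgn(\sigma)^r \in \{\pm 1\}$, is equivalent to the claimed $\sgn(T) = \sgn(\sigma)^r \sgn(T')$. The main obstacle is simply the careful bookkeeping across these three inversion types, but each case reduces to either a trivial reindexing or a standard permutation identity.
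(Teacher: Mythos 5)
Your proof is correct and rests on the same core observation as the paper's: after permuting columns, the only inversions whose status can change are those between two entries in the same one of the top $r$ rows, since cross-row inversions and the tail of the reading word are unaffected. The difference is in execution. The paper first reduces to the case where $\sigma$ is a simple transposition $s_i$, so that in each of the top $r$ rows exactly one pair (the entries in columns $i$ and $i+1$) flips its inversion status, giving a parity change of $r$ per adjacent transposition; the general statement then follows by writing $\sigma$ as a product of simple transpositions. You instead handle a general $\sigma$ in one pass, classifying pairs into your types (A)/(B)/(C) and invoking the standard fact that reordering a sequence of distinct values by $\tau$ changes its inversion parity by $\sgn(\tau)$, applied once to each of the $r$ full rows with $\tau=\sigma^{-1}$. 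Your route avoids the reduction step at the cost of a slightly heavier bookkeeping argument (in particular the type-(A)-across-rows and type-(B) invariance checks, which you carry out correctly); the paper's reduction makes each individual step nearly trivial. Both are complete and valid.
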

\begin{proof}
It suffices to prove the lemma for $\sigma = s_i$ a simple transposition, swapping $i$ and $i+1$.
Suppose $\pi=(\pi_1\mid \pi_2\mid \ldots\mid \pi_d)\in\ordsetpart(n,d,r)$ and suppose $\pi'$ is obtained from $\pi$ by swapping $\pi_i$ and $\pi_{i+1}$. 

For each $T\in\rstab_r(\pi)$, there is a corresponding tableau $T'\in\rstab_r(\pi')$ given by swapping columns $i$ and $i+1$ of $T$. It remains to compare the inversion counts between $T$ and $T'$. Inversions in $T$ between entries in different rows are the same as inversions in $T'$ between entries in different rows. 
Moreover, inversions involving a column other than $i$ or $i+1$ are the same in $T$ and $T'$.
Thus, we need only consider inversions between entries in columns $i$ and $i+1$ that are in a single row. The only rows that have more than one entry are rows $1,2,\ldots,r$. If there were an inversion between the pair of entries in cells $(s,i)$ and $(s,i+1)$ of $T$,  where $1\leq s \leq r$, there would not be an inversion between the pair of entries in cells $(s,i)$ and $(s,i+1)$ of $T'$. If there were not an inversion between the pair of entries in cells $(s,i)$ and $(s,i+1)$ of $T$, there would be an inversion between the pair of entries in cells $(s,i)$ and $(s,i+1)$ of $T'$. This does not change the parity of the number of inversions when $r$ is even and does change the parity when $r$ is odd. Thus each simple transposition of columns of the tableau changes the sign of the tableau: ${\sgn(T)}=(-1)^r{\sgn(T')}$. The lemma follows.
\end{proof}

\begin{example}
Consider the tableaux $U$, $T$, $U'$, and $T'$ shown below. 
Let $\pi = (2\ 3\ 6\ 10 \mid 5\ 7\ 8\ 9 \mid 1\ 4)$ and let $\pi' = (5\ 7\ 8\ 9\ \mid 1\ 4\ \mid 2\ 3\ 6\ 10) = \sigma(\pi)$, where $\sigma$ is the permutation $312$.
We see that $U\in\rstab_2(\pi)$ and $T\in\rstab_2(\pi')$. Let $\rho = (2\ 3\ 6\ 7\ 12 \mid 1\ 8\ 10 \mid 4\ 5\ 9\ 11)$ 
and let $\rho' = (4\ 5\ 9\ 11 \mid 1\ 8\ 10 \mid 2\ 3\ 6\ 7\ 12)  = \tau(\rho)$, where $\tau$ is the permutation $321$. 
Then $U' \in \rstab_3(\rho)$ and $T' \in \rstab_3(\rho')$. 
Note that $\sgn(\sigma) = (-1)^2 = 1$, while $\sgn(\tau) = (-1)^3 = -1$.

The pairs of entries $(4,7)$, $(3,5)$, and $(9,10)$, for example, form inversions in $T$. The reader may check that $\inv(T)=9$ and $\inv(U)=7$. We may also find that $\inv(U')=9$ and $\inv(T')=12$. While $\inv(U)\neq\inv(T)$, we see that $\sgn(U)=\sgn(T)$, as predicted by Lemma~\ref{lem:colswap}.
On the other hand, $\sgn(U')\neq\sgn(T')$, which is consistent with Lemma~\ref{lem:colswap} since $r=3$ in this case and $\sgn(\tau) = -1$.

\[U=\begin{ytableau}
2 & 5 & 1 \\
3 & 7 & 4 \\
6 \\
\none & 8\\
10 \\
\none & 9
\end{ytableau}\hspace{.5in}
T=\begin{ytableau}
5 & 1 & 2 \\
7 & 4 & 3\\
\none & \none & 6\\
 8\\
\none & \none & 10\\
9
\end{ytableau}\hspace{.5in}
U'= \begin{ytableau}
2 & 1 & 4\\
3 & 8 & 5 \\
6 & 10 & 9\\
7 & \none \\
12 & \none & \none \\
\none & \none & 11
\end{ytableau}\hspace{.5in}
T'=\begin{ytableau}
4 & 1 & 2\\
5 & 8 & 3 \\
9 & 10 & 6\\
\none & \none & 7 \\
\none & \none & 12 \\
11 & \none & \none
\end{ytableau}\]
\end{example}

Recall the matrix $M$ from Section~\ref{sec:Grassmann}. Let $I$ and $J$ be finite subsets of $\mathbb{N}$ and let $M_I^J$ denote the determinant of the submatrix of $M$ with rows indexed by $I$ and columns indexed by $J$, in increasing order. (For convenience, we sometimes write elements separated by commas in the subscript and superscript rather than formal sets.) Note since these minors may not be top-justified, they are not naturally expressed in Pl\"ucker variables.

\begin{definition}
\label{def:rsdet}
Given $\pi=(\pi_1\mid \pi_2\mid\ldots\mid \pi_d)\in\ordsetpart(n,d,r)$ and $T\in \rstab_r(\pi)$, define the product of determinants \[\rspoly(T)= 
\displaystyle\prod_{i=1}^d M_{R_i(T)}^{\pi_i},\] where $R_i(T)$ is the set of rows of $T$ containing an entry in $\pi_i$.
\end{definition}

\begin{example}\label{ex:3-jellyfish}
For example, below is a $3$-jellyfish tableau and its corresponding product of determinants. 
\begin{center}
\raisebox{4ex}{$T= \begin{ytableau}
2 & 1 & 4\\
3 & 8 & 5 \\
6 & 10 & 9\\
7 & \none \\
\none & \none & 11 \\
12 & \none & \none
\end{ytableau}$}
\hspace{.5in}
$\rspoly(T)=\begin{vmatrix}
x_{12} & x_{13} & x_{16} & x_{17} & x_{1,12}\\
x_{22} & x_{23} & x_{26} & x_{27} & x_{2,12}\\
x_{32} & x_{33} & x_{36} & x_{37} & x_{3,12}\\
x_{42} & x_{43} & x_{46} & x_{47} & x_{4,12}\\
x_{62} & x_{63} & x_{66} & x_{67} & x_{6,12}
\end{vmatrix}\cdot
\begin{vmatrix}
x_{11} & x_{18} & x_{1,10}\\
x_{21} & x_{28} & x_{2,10}\\
x_{31} & x_{38} & x_{3,10} 
\end{vmatrix}\cdot
\begin{vmatrix}
x_{14} & x_{15} & x_{19} & x_{1,11} \\
x_{24} & x_{25} & x_{29} & x_{2,11} \\
x_{34} & x_{35} & x_{39} & x_{3,11} \\
x_{54} & x_{55} & x_{59} & x_{5,11} 

\end{vmatrix}
$

\end{center}
\end{example}

Note that Definition~\ref{def:rsdet} reduces to the Pl\"ucker case when $\pi$ is an ordered set partition with equally sized blocks.

We now define a polynomial invariant for each ordered set partition $\pi\in\ordsetpart(n,d,r)$. 

\begin{definition}
\label{def:polynomial}
Given an ordered set partition $\pi\in\ordsetpart(n,d,r)$, let $[\pi]_r$ denote the \newword{$r$-jellyfish invariant} \[
[\pi]_r = \sum_{T\in \rstab_r(\pi)}\sgn(T) \; \rspoly(T).
\]
If $\theta$ is an ordered set partition with a block of size less than $r$, we set $[\theta]_r=0$.
\end{definition}

\begin{remark}
In \cite{Patrias.Pechenik.Striker}, $2$-jellyfish invariants for noncrossing set partitions were called \emph{web invariants}. In Section~\ref{sec:GrassmannCayley}, we explain how to realize arbitrary $r$-jellyfish invariants as classical tensor invariants; however, the associated tensor diagrams are highly nonplanar, even when the set partition is noncrossing.
\end{remark}

\begin{example}\label{ex:RStableaux} 
Suppose $\pi=(2~3~6~10 \mid 5~7~8~9 \mid 1~4)$. Then $[\pi]_r=0$ for $r>2$. To compute $[\pi]_2$, we first see that $\rstab_2(\pi)$ consists of the $2$-jellyfish tableaux below.
\[\begin{ytableau}
2 & 5 & 1 \\
3 & 7 & 4 \\
6 \\
10\\
\none & 8 \\
\none & 9
\end{ytableau}\hspace{.3in}
\begin{ytableau}
2 & 5 & 1 \\
3 & 7 & 4 \\
6 \\
\none & 8\\
10 \\
\none & 9
\end{ytableau}\hspace{.3in}
\begin{ytableau}
2 & 5 & 1 \\
3 & 7 & 4 \\
6 \\
\none & 8\\
\none & 9\\
10
\end{ytableau}\hspace{.3in}
\begin{ytableau}
2 & 5 & 1 \\
3 & 7 & 4 \\
\none & 8\\
6 \\
10 \\
\none & 9
\end{ytableau}\hspace{.3in}
\begin{ytableau}
2 & 5 & 1 \\
3 & 7 & 4 \\
\none & 8\\
6 \\
\none & 9\\
10
\end{ytableau}\hspace{.3in}
\begin{ytableau}
2 & 5 & 1 \\
3 & 7 & 4 \\
\none & 8\\
\none & 9\\
6 \\
10
\end{ytableau}\]
The leftmost tableau has row reading word $2~5~1~3~7~4~6~10~8~9$ and thus has 8 inversions. Reading the list of tableaux from left to right, the tableaux have 8, 7, 6, 8, 7, and 8 inversions, respectively. Finally, we have that 
\begin{align*}[\pi]_2 = &M_{1,2,3,4}^{2,3,6,10}\cdot M_{1,2,5,6}^{5,7,8,9}\cdot M_{1,2}^{1,4}
- M_{1,2,3,5}^{2,3,6,10}\cdot M_{1,2,4,6}^{5,7,8,9}\cdot M_{1,2}^{1,4}
+ M_{1,2,3,6}^{2,3,6,10}\cdot M_{1,2,4,5}^{5,7,8,9}\cdot M_{1,2}^{1,4}\\
+ &M_{1,2,4,5}^{2,3,6,10}\cdot M_{1,2,3,6}^{5,7,8,9}\cdot M_{1,2}^{1,4}
- M_{1,2,4,6}^{2,3,6,10}\cdot M_{1,2,3,5}^{5,7,8,9}\cdot M_{1,2}^{1,4}
+ M_{1,2,5,6}^{2,3,6,10}\cdot M_{1,2,3,4}^{5,7,8,9}\cdot M_{1,2}^{1,4}.
\end{align*}
To compute $[\pi]_1$, we would first find all $\binom{7}{3,3,1}=140$ jellyfish tableaux in $\rstab_1(\pi)$. We have listed four such tableaux below.

\[\begin{ytableau}
2 & 5 & 1 \\
3 & \none  \\
\none &7 & \none\\
\none & 8\\
\none & 9\\
6 \\
10\\
\none & \none & 4
\end{ytableau}\hspace{.5in}
\begin{ytableau}
2 & 5 & 1 \\
\none & 7  \\
3 & \none\\
\none & 8\\
\none & 9\\
6 \\
10\\
\none & \none & 4
\end{ytableau}\hspace{.5in}
\begin{ytableau}
2 & 5 & 1 \\
\none &7 & \none\\
\none & 8\\
\none & \none & 4\\
3 & \none  \\
\none & 9\\
6 \\
10
\end{ytableau}\hspace{.5in}
\begin{ytableau}
2 & 5 & 1 \\
\none & \none & 4\\
\none &7 & \none\\
3 & \none  \\
\none & 8\\
6 \\
10\\
\none & 9
\end{ytableau}\]
The associated polynomials are, respectively,
\begin{align*}
(-1)^{12} &M_{1,2,6,7}^{2,3,6,10}\cdot M_{1,3,4,5}^{5,7,8,9}\cdot M_{1,8}^{1,4},\\
(-1)^{13} &M_{1,3,6,7}^{2,3,6,10}\cdot M_{1,2,4,5}^{5,7,8,9}\cdot M_{1,8}^{1,4},\\
(-1)^{12} &M_{1,5,7,8}^{2,3,6,10}\cdot M_{1,2,3,6}^{5,7,8,9}\cdot M_{1,4}^{1,4}, \text{ and}\\
(-1)^{9} &M_{1,4,6,7}^{2,3,6,10}\cdot M_{1,3,5,8}^{5,7,8,9}\cdot M_{1,2}^{1,4}.
\end{align*}
\end{example}

\begin{example}\label{ex:3row}
Consider $\pi=(2~3~6~7~12\mid 1~8~10\mid 4~5~9~11)$. The set $\rstab_3(\pi)$ is shown below.
\begin{center}
 \begin{ytableau}
2 & 1 & 4\\
3 & 8 & 5 \\
6 & 10 & 9\\
7 & \none \\
12 & \none & \none \\
\none & \none & 11
\end{ytableau}\hspace{1in}
 \begin{ytableau}
2 & 1 & 4\\
3 & 8 & 5 \\
6 & 10 & 9\\
7 & \none \\
\none & \none & 11 \\
12 & \none & \none
\end{ytableau}\hspace{1in}
 \begin{ytableau}
2 & 1 & 4\\
3 & 8 & 5 \\
6 & 10 & 9\\
\none & \none & 11 \\
7 & \none & \none \\
12 & \none & \none
\end{ytableau}
\end{center}
From this, we compute that 
\begin{align*}[\pi]_3=(-1)^{9}M_{1,2,3,4,5}^{2,3,6,7,12}\cdot M_{1,2,3}^{1,8,10}\cdot M_{1,2,3,6}^{4,5,9,11}+(-1)^{8}M_{1,2,3,4,6}^{2,3,6,7,12}\cdot M_{1,2,3}^{1,8,10}\cdot M_{1,2,3,5}^{4,5,9,11}\\
+(-1)^{9}M_{1,2,3,5,6}^{2,3,6,7,12}\cdot M_{1,2,3}^{1,8,10}\cdot M_{1,2,3,4}^{4,5,9,11}.
\end{align*}

\end{example}

\section{Jellyfish invariants are tensor diagram invariants}\label{sec:GrassmannCayley}
In this section, we first recall background on Grassmann--Cayley algebras and tensor diagrams. We then use these ideas to realize jellyfish invariants as elements of the homogeneous coordinate ring of a Grassmannian in two steps: first we show that that jellyfish invariants can be computed via a certain expression in the Grassmann--Cayley algebra, and second we interpret this Grassmann--Cayley expression in terms of tensor diagrams. Finally, we establish that jellyfish invariants are elements of flamingo Specht modules $S^{(d^r,1^{n-r\cdot d})}$ in Theorem~\ref{thm:inspecht}.

\subsection{Jellyfish tableaux and Grassmannians}\label{sec:jellyfish_on_Gr}
With a fixed value of~$n$ in mind, let 
$M_0$ be the $n \times n$ diagonal matrix whose diagonal entries read $+1,-1,+1,$ starting from the upper left corner. Thus, for $n=2$, we have $M_0 = \begin{pmatrix}
1 & 0 \\ 0 & -1
\end{pmatrix}$
and, for $n=3$, we have
$M_0 = \begin{pmatrix}
1 &  0 & 0 \\ 0 & -1 & 0 \\ 0 & 0 & 1
\end{pmatrix}$.

Given an $n \times n$ matrix $M$, we have an associated $n \times 2n $ matrix 
\begin{equation}\label{eq:Phi}
\Phi(M) := \begin{pmatrix}
M_0 & {M} 
\end{pmatrix}
\end{equation}
obtained by concatenating $M_0$ with $M$.
Clearly, $\Phi(M)$ has rank $n$, so we may think of $\Phi(M)$ as representing a point in the Grassmannian $\Gr(n,2n)$ by taking the span of the rows. The map $M \mapsto \Phi(M)$ is an isomorphism between ${\rm Mat}_{n \times n}$ and the open Schubert cell defined by $\Delta_{[1,n]} \neq 0$ inside 
$\Gr(n,2n)$. We also denote this isomorphism of algebraic varieties by $\Phi$. We write $\Phi^*$ for the induced map on coordinate rings, that is, on polynomials in the matrix entries. This ring map is the key bridge which allows us to translate the constructions from the previous section to the setting of Grassmannians.

One computes that
\begin{equation}\label{eq:PlutoMinor}
(-1)^{|I|}\Delta_{([n]\setminus I) \cup (J+n)}(\Phi(M)) = M_I^J,
\end{equation}
which allows us to translate statements about matrix minors of arbitrary size to statements about Pl\"ucker coordinates on ${\rm Gr}(n,2n)$. The advantage of this translation is that we already have a notion of tensor diagrams and webs in place for functions on Grassmannians. This translation is known to experts; the map $\Phi$ (or rather, a slight variant of this map with different choices of signs) appears frequently in the total positivity literature, as it allows one to translate the classical notion of total positivity for ${\rm GL}_n$ as a special case of total positivity for the Grassmannian ${\rm Gr}(n,2n)$. However, we are unaware of any explicit description in the literature for this approach to constructing Specht modules.

\begin{example}
Recall the jellyfish tableau
\[
T= \begin{ytableau}
2 & 1 & 4\\
3 & 8 & 5 \\
6 & 10 & 9\\
7 & \none \\
\none & \none & 11 \\
12 & \none & \none
\end{ytableau}
\] from Example~\ref{ex:3-jellyfish}. We computed in that example that $J(T)= M_{1,2,3,4,6}^{2,3,6,7,12} \cdot M_{1,2,3}^{1,8,10}\cdot M_{1,2,3,5}^{4,5,9,11}$.
We have by Equation~\eqref{eq:PlutoMinor} that
\begin{align*}M_{1,2,3,4,6}^{2,3,6,7,12} &=(-1)^5\Delta_{5,7,8,9,10,11,12,2+12,3+12,6+12,7+12,12+12}(\Phi(M)), \\
M_{1,2,3}^{1,8,10} &= (-1)^3\Delta_{4,\dots,12,1+12,8+12,10+12}(\Phi(M)), \text{ and} \\ M_{1,2,3,5}^{4,5,9,11} &= (-1)^4\Delta_{4,6,\ldots,12,4+12,5+12,9+12,11+12}(\Phi(M)).\end{align*}
Thus, $J(T)$ can be realized as an element of the homogeneous coordinate ring of ${\rm Gr}(12,24)$, by multiplying these three signed Pl\"ucker coordinates.
\end{example}

\subsection{Background on Grassmann--Cayley algebras}\label{sec:GrassmanCayleyalgebras}
Let $V = \mathbb{C}^n$ with exterior powers $\bigwedge^a(V) $ and exterior algebra $\bigwedge(V) = \bigoplus_{a} \bigwedge^a(V)$. 

The wedge product is a map $\bigwedge^a(V) \otimes \bigwedge^b(V) \to \bigwedge^{a+b}(V)$. There is also a ``dual'' map of sorts $\cap \colon \bigwedge^{n-a}(V) \otimes \bigwedge^{n-b}(V) \to \bigwedge^{n-a-b}(V)$ defined by
\begin{align}\label{eq:meetoperation}
v_1 \wedge \cdots \wedge v_{n-a} \otimes w_1 \wedge \cdots \wedge w_{n-b} &\mapsto \\ \sum_{i_1 < \cdots < i_{b}} {\rm sign}(i_1,\dots,i_b,j_1,\dots,j_{n-a-b}) &\det\left(v_{i_1},\dots,v_{i_b},w_1,\dots,w_{n-b}\right)v_{j_1} \wedge \cdots \wedge v_{j_{n-a-b}}\nonumber
\end{align}
where for each choice of $i_1,\dots,i_b$ we define $j_1,\dots,j_{n-a-b}$ to be the elements of $[n-a] \setminus \{i_1,\dots,i_b\}$ written in ascending order. We refer to this operation as \newword{cap}.

The cap operation $\cap$ is associative. It is commutative up to a predictable sign. See \cite[Chapter~3]{Sturmfels} for these and other properties. The exterior algebra together with the $\cap$ operation is known as the \newword{Grassmann--Cayley algebra}, and the operations of $\wedge$ and $\cap$ are referred to as the join and meet operations in this context. For us, this is a succinct algebraic formalism for writing down complicated polynomials in Pl\"ucker coordinates. Note that \cite{Sturmfels} uses the symbols $\vee$ and $\wedge$ where we use $\wedge$ and $\cap$, respectively; we motivate our notation in the following remark.

\begin{remark} 
A nonzero tensor $x \in \bigwedge^a(V)$ is \newword{decomposable} if it can be written as a wedge of several vectors: $x = v_1 \wedge \cdots \wedge v_a$ for some $v_1,\dots,v_a$. (Recall that any element of $\bigwedge^a(V)$ is a linear combination of decomposable tensors.) A decomposable tensor $x$ determines a vector subspace $A_x \subset V$, namely the subspace $A_x := \{v \in V \colon v \wedge x = 0\}$. Moreover, $v_1,\dots,v_a$ are a basis for $A_x$ whenever $x = v_1 \wedge \cdots \wedge v_a$. If $x \in \bigwedge^a(V)$ and $y \in \bigwedge^b(V)$ are decomposable tensors with $x \wedge y \neq 0$, one has 
$$A_{x \wedge y} = A_x \oplus A_y.$$
Thus, $\wedge$ is an exterior-algebraic interpretation of the direct sum of vector subspaces. 

In the same way, $\cap$ is an exterior-algebraic interpretation of the intersection of vector subspaces. That is, if $x$ and $y$ are decomposable with $\codim(A_x \cap A_y) = \codim A_x + \codim A_y$, then $x \cap y$ is decomposable and 
\[A_{x \cap y} = A_x \cap A_y.\]
The decomposability is not obvious from the formula \eqref{eq:meetoperation}, but it is nonetheless true. 
\end{remark}

\begin{remark}\label{rmk:cookingwithGC}
Let $v_1,\dots,v_{2n} \in V$ be the columns of an $n \times 2n$ matrix representing a point in the Grassmannian ${\rm Gr}(n,2n)$. A standard recipe is to construct polynomial functions on ${\rm Gr}(n,2n)$ by repeatedly composing the $\wedge$ and $\cap$ operations until we eventually arrive at $\bigwedge^0(V) = \mathbb{C}$ or $\bigwedge^n(V) = \mathbb{C}$. Any function obtained this way can be expanded as an explicit monomial in Pl\"ucker coordinates by expanding the terms in \eqref{eq:meetoperation}.
\end{remark}

\subsection{Background on tensor diagrams}\label{sec:TDbackground}
Tensor diagrams are a pictorial formalism for encoding elements of the Grassmannian coordinate ring. Thus, one can also think of them as a formalism for encoding ${\rm SL}_n$-invariant polynomials or ${\rm SL}_n$ tensor invariants.

The tensor diagram formalism is very closely related to the Grassmann--Cayley approach. An accessible introduction to these ideas is given in the introduction of \cite{Fomin.Pylyavskyy} in the case of ${\rm SL}_3$, which discusses both the local and global approaches we describe below. The main reference for the general ${\rm SL}_n$ case is   \cite{Cautis.Kamnitzer.Morrison}. Our viewpoint here closely follows \cite{Fraser.Lam.Le}, which also handles this case.

\begin{definition}\label{defn:tensordiagram}
Consider a disk with $m$ points marked $1,\dots,m$ in clockwise order on its boundary. A \newword{tensor diagram} $W$ of type $(n,m)$ is a bipartite graph drawn in this disk with the property that every marked boundary point is a vertex of~$W$ and is colored black, all other vertices of $W$ reside in the interior of the disk and are colored either white or black. Moreover, the edges of $W$ are weighted by elements of $[n]$ such that the sum of edge weights around every interior vertex equals~$n$. (It will be convenient in some formulas to write edges of weight $0$, by which we mean that those edges do not exist.) We consider tensor diagrams up to boundary-preserving isomorphism.
\end{definition}

We do not require that tensor diagrams be planar graphs; we call a tensor diagram a \newword{web} if it is planar. By the \newword{unclasping} of a tensor diagram, we mean the graph obtained by replacing each boundary vertex of degree $\delta \geq 2$ by $\delta$ vertices of degree $1$ (leaving all other edges and vertices in the graph intact). We call a tensor diagram a \newword{tree} if its unclasping is a tree in the usual graph-theoretic sense.

A tensor diagram $W$ of type $(n,m)$ determines a \newword{tensor diagram invariant} 
\begin{equation}\label{eq:globaldefinition}
[W] \in \mathbb{C}[{\rm Gr}(n,m)].
\end{equation}

For a self-contained definition of $[W]$, we refer to \cite[Definition 4.1]{Fraser:2column}, which is based on \cite[Lemma 5.4]{Fraser.Lam.Le}. While the details of the definition will not concern us here, we can summarize the idea of the definition as follows:
\begin{itemize}
    \item By multilinearity, an ${\rm SL}_n$ invariant polynomial is determined by how it evaluates on tensor products of standard basis vectors $e_i \in V$.
    \item To any such tensor product ${\bf e} = e_{i_1} \otimes \cdots \otimes e_{i_m}$, one can associate a sign, namely the inversion number of the sequence of indices $i_1,\dots,i_m$. 
    \item The evaluation of the invariant $[W]$ on the tensor product ${\bf e}$ is this sign multiplied by a certain graph-theoretic count, namely the number of \emph{consistent labelings} (see \cite[Definition~3.1]{Fraser:2column}) of $W$ with boundary $(i_1,\dots,i_m)$.   
\end{itemize} 

We will refer to this as the \newword{global definition} of the tensor invariant $[W]$, to be contrasted with the local definition introduced shortly.

\begin{remark}
The references \cite{Fraser:2column, Fraser.Lam.Le} restricted attention to the class of planar diagrams, but we would like to use these definitions for nonplanar diagrams. This is a mild extension, as one can always convert a nonplanar diagram to a linear combination of planar diagrams using the crossing removal skein relation \cite[Corollary 6.2.3]{Cautis.Kamnitzer.Morrison} (see also \cite[Equation (34)]{Fraser.Pylyavskyy}), invoke \cite[Lemma 5.4]{Fraser.Lam.Le} on each of the resulting planar diagrams, and then undo the crossing removal skein relation to conclude that \cite[Lemma 5.4]{Fraser.Lam.Le} also holds for the nonplanar diagram.
\end{remark}

There is a second approach to defining the invariant $[W]$, which is the perspective favored in \cite{Cautis.Kamnitzer.Morrison} (see, e.g., Equation (1.1) and Section 3.1 therein). Roughly speaking, in this \newword{local} approach, every white vertex encodes an invocation of the $\wedge$ operation, whereas every black vertex encodes an invocation of the \newword{dual exterior product} operation $\psi^{s,t} \colon \bigwedge^{s+t}V \to \bigwedge^s V \otimes \bigwedge^tV$. The vertices on the boundary of the tensor diagram encode the input vectors $(v_1,\dots,v_m)$, and the tensor invariant $[W]$ evaluates on this input by composing the exterior and dual exterior product maps in the manner indicated by the diagram $W$, in a similar spirit to Remark~\ref{rmk:cookingwithGC}.

To develop the second approach rigorously, one modifies Definition~\ref{defn:tensordiagram} as follows: 
\begin{enumerate}
    \item edges are now oriented, with each boundary vertex a source;
    \item for every interior leaf vertex, the incident edge has weight $n$; 
    \item the sum of weights of incoming edges matches the sum of weights of outgoing edges at every interior vertex that is not a leaf; and
    \item the bipartiteness condition is droppped (but the vertices must still be bicolored).
\end{enumerate} 
The leaf vertices are called \newword{tags}. They reflect the identifications $\bigwedge^n \mathbb{C}^n \cong \mathbb{C}$. The resulting combinatorial object is called a \newword {tagged} tensor diagram in \cite{Fraser.Lam.Le}. 

The key technical result of \cite{Fraser.Lam.Le} is that the tagged tensor diagram formalism is equivalent to the global definition we gave above, in the sense that any tagged tensor diagram $\widehat{W}$ can be converted in a purely combinatorial manner to a tensor diagram $W$ in such a way that the invariants match up to sign (i.e., the function obtained by composing the exterior and dual exterior product maps as described by $\widehat{W}$ matches the global definition of the invariant $[W]$). 

In our proof of Proposition~\ref{prop:webs_are_tensors}, we will want to make this conversion precise. 
In an effort to be self-contained, we appeal to the following construction \cite[Remark 3.3]{Fraser.Lam.Le} for turning a tensor diagram $W$ into a tagged one $\widehat{W}$ whose corresponding invariant agrees up to sign. 

A \newword{perfect orientation} $\mathcal{O}$ of a bipartite graph is an orientation of its edges such that each white vertex has outdegree one and each black vertex has indegree one. Given a tensor diagram $W$ with a perfect orientation $\mathcal{O}$, we obtain a tagged tensor diagram 
by complementing the weights $a \mapsto n-a$ of all edges oriented from a white vertex to a black vertex. The resulting graph may have some boundary sinks, and one turns these into boundary sources by adding a tag to each such edge (see \cite[Eq.~(3.3)]{Fraser.Lam.Le}). Finally, one eliminates any oriented cycles by adding additional tags.

\begin{remark}\label{rmk:WLOG}
The $\cap$ operation can be expressed as a composition of $\wedge$ and dual exterior product maps. Namely one has 
\[(v_1 \wedge\cdots\wedge v_{n-a}) \cap (w_1 \wedge \cdots \wedge w_{n-b}) = \psi^{n-a-b,b}(v_1\wedge\dots\wedge v_{n-a}) \wedge (w_1 \wedge \cdots \wedge w_{n-b}).\]
Thus, any function that can be built out of iterating $\wedge$ and $\cap$ operations as in Remark~\ref{rmk:cookingwithGC} is a tagged tensor diagram invariant, hence coincides with an ordinary tensor diagram invariant up to sign. Such a diagram will always be a tree, typically a nonplanar one.
\end{remark}

\subsection{Jellyfish invariants as Grassmann--Cayley expressions}
We now interpret the jellyfish invariant construction $\pi \mapsto [\pi]_r$ in the language of Grassmann--Cayley algebras. 

With vectors $v_1,\dots,v_{2n}$ fixed, for $J \subseteq \{1,\ldots,2n\}$ we write $v_J \coloneqq v_{j_1} \wedge \cdots \wedge v_{j_b} \in \bigwedge^b V$, where $j_1,\dots,j_b$ are the elements of $J$ written in ascending order. 

We introduce some bookkeeping notation we will use in this section. Given $\pi \in \ordsetpart(n,d,r)$ we set $\nu_i = |\pi_i| - r$ and set $\nu = r+\nu_1+\cdots +\nu_d = n-(r-1)d$. Thus $\nu_i$ is the number of ``extra'' rows used in column $i$ when we calculate jellyfish tableaux for $\mathcal{J}_r(\pi)$. And $\nu$ is the total number of rows used, i.e.\ the length of the first column in the corresponding Specht module. 
We put 
$S = [r+1,\nu]$ and $E = [\nu+1,n]$. One can think of $S$ as the rows containing the tentacles of the jellyfish tableaux for $\mathcal{OP}(n,d,r)$ and $E$ as the rows strictly below all the tentacles.

The following definition is the main link between jellyfish invariants and the Grassmann--Cayley algebra. If $X$ is a set of numbers, we define $X+ n = \{ x + n : x \in X\}$.
\begin{definition}
\label{def:web_inv}
Let $\pi \in \ordsetpart(n,d,r)$. We obtain a function that evaluates on column vectors $v_1,\dots,v_{2n} \in V$ as follows: 
\begin{align}\label{eq:meetjellyfish}
v_1,\dots,v_{2n} \mapsto \left(\bigwedge_{i=1}^{d-1} v_{S} \cap v_{E \cup (\pi_i+n)} \right) \wedge v_{E \cup (\pi_d+n)}.
\end{align}
We denote this function by $[\pi]'_r$. 
\end{definition}

\begin{lemma}
   Let $\pi \in \ordsetpart(n,d,r)$. Then $[\pi]'_r$ is in $\mathbb{C}[{\rm Gr}(n,2n)]$.
\end{lemma}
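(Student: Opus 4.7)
The plan is to verify that $[\pi]'_r$ fits into the Grassmann--Cayley ``cooking'' framework outlined in Remark~\ref{rmk:cookingwithGC}: any function built by iterated applications of $\wedge$ and $\cap$ that terminates in $\bigwedge^n V \cong \mathbb{C}$ is automatically a polynomial function on $\mathrm{Gr}(n,2n)$. Hence the task reduces to a degree bookkeeping confirming that the output of \eqref{eq:meetjellyfish} lands in $\bigwedge^n V$, together with a brief appeal to $\mathrm{SL}_n$-invariance.

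First I would record the sizes of the index sets involved. Since $\nu = n - (d-1)r$, we have $|S| = \nu - r = n - dr$ and $|E| = n - \nu = (d-1)r$, so $v_S \in \bigwedge^{n-dr} V$ and $v_{E \cup (\pi_i + n)} \in \bigwedge^{(d-1)r + |\pi_i|} V$ for each $i$. The block-size constraints $r \le |\pi_i| \le n - (d-1)r$ forced by $\pi \in \ordsetpart(n,d,r)$ guarantee that these degrees all lie in $[0,n]$, so every invocation of the cap formula \eqref{eq:meetoperation} is legitimate.

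Next I would track the degree through the expression. Applying $\cap \colon \bigwedge^{n-a}V \otimes \bigwedge^{n-b}V \to \bigwedge^{n-a-b}V$ with $n-a = |S|$ and $n-b = |E| + |\pi_i|$, a one-line calculation gives $n-a-b = |S|+|E|+|\pi_i|-n = |\pi_i|-r = \nu_i$, so that $v_S \cap v_{E \cup (\pi_i+n)} \in \bigwedge^{\nu_i} V$. Wedging over $i=1,\dots,d-1$ produces an element of degree $\sum_{i=1}^{d-1}\nu_i = (n-dr)-\nu_d = n - (d-1)r - |\pi_d|$, and the final wedge with $v_{E \cup (\pi_d+n)}$, of degree $(d-1)r + |\pi_d|$, brings the total to exactly $n$. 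Thus the output lies in $\bigwedge^n V \cong \mathbb{C}$, as required.

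Finally, the explicit formula \eqref{eq:meetoperation} for $\cap$ together with the definition of $\wedge$ shows that $[\pi]'_r$ is polynomial in the entries of $v_1,\dots,v_{2n}$. Since $\wedge$ and $\cap$ are $\mathrm{SL}_n$-equivariant and $\bigwedge^n V$ is the trivial $\mathrm{SL}_n$-representation, $[\pi]'_r$ is $\mathrm{SL}_n$-invariant under simultaneous left multiplication of the column vectors, so it descends to an element of $\mathbb{C}[\mathrm{Gr}(n,2n)]$. There is no substantive obstacle; the whole argument is a bookkeeping exercise, and the only detail worth flagging is verifying that each intermediate $\cap$ operation is well-defined, which is exactly the role played by the block-size inequalities noted in the second paragraph.
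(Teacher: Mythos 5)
Your proposal is correct and follows essentially the same route as the paper: the identical degree bookkeeping through the $\cap$ and $\wedge$ operations, showing each factor $v_S \cap v_{E\cup(\pi_i+n)}$ lies in $\bigwedge^{\nu_i}(V)$ and that the full expression lands in $\bigwedge^n(V)\cong\mathbb{C}$. The only minor difference is the final step, where the paper concludes by expanding \eqref{eq:meetoperation} explicitly into a signed sum of degree-$d$ monomials in Pl\"ucker coordinates, whereas you appeal to $\mathrm{SL}_n$-equivariance; both are valid, though the paper's expansion also records the useful extra fact that $[\pi]'_r$ has degree $d$ in the Pl\"ucker coordinates.
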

\begin{proof}
The $i$th term in the wedge product of \eqref{eq:meetjellyfish} is $ v_{S} \cap v_{E \cup (\pi_i+n)} \in \bigwedge^{\nu_i}(V)$. Wedging these together for $i=1,\dots,d-1$ gives an element of $\bigwedge^{\nu_1+\cdots +\nu_{d-1}}(V)$. Further wedging with $v_{E \cup (\pi_i+n)}$ gives an element of 
$\bigwedge^{n}(V)$, which is $1$-dimensional and is identified with $\mathbb{C}$. Thus, we get a function on $(v_1,\dots,v_{2n})$. Moreover, by expanding each parenthesized factor as a signed sum using \eqref{eq:meetoperation}, we can express this function as a signed sum of monomials in Pl\"ucker coordinates. We get one Pl\"ucker coordinate from each of the parenthesized factors, and one more Pl\"ucker coordinate from the wedge with $v_{E \cup (\pi_i+n)}$, so the formula \eqref{eq:meetjellyfish} is a signed sum of degree~$d$ monomials in Pl\"ucker coordinates. 
\end{proof}

Our main result in this subsection is that that $r$-jellyfish invariants match the Grassmann--Cayley expression \eqref{eq:meetjellyfish} up to sign. Recall the map $\Phi$ from \eqref{eq:Phi}.
\begin{proposition}\label{prop:jellyfish_and_GC}
For an ordered set partition $\pi \in \ordsetpart(n,d,r)$, we have 
$$\Phi^*([\pi]'_r) = \pm [\pi]_r,$$
i.e., the $r$-jellyfish invariant coincides with the Grassmann--Cayley expression up to sign.
\end{proposition}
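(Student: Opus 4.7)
The plan is to expand the Grassmann--Cayley expression $[\pi]'_r$ using the formula \eqref{eq:meetoperation}, identify the resulting terms with $r$-jellyfish tableaux, and then verify that the signs coincide with Definition~\ref{def:polynomial} up to a global sign depending only on $\pi$.

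First, I would expand each cap $v_S \cap v_{E \cup (\pi_i + n)}$ as a signed sum indexed by the ``remainder subset'' $K_i \subseteq S$ of size $\nu_i$, in which $v_{K_i}$ is the surviving tensor factor and the complementary subset $I_i := S \setminus K_i$ (of size $|S|-\nu_i$) appears inside the determinant $\det(v_{I_i}, v_{E \cup (\pi_i + n)})$. Wedging the outputs of the $d-1$ caps, only tuples $(K_1,\dots,K_{d-1})$ of pairwise disjoint subsets survive (since $v_{K_i} \wedge v_{K_j} = 0$ whenever $K_i \cap K_j \neq \emptyset$), and setting $K_d := S \setminus \bigcup_{i<d} K_i$ yields an ordered set partition $(K_1,\dots,K_d)$ of $S$ with $|K_i| = \nu_i$ (using $\sum_i \nu_i = |S|$). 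Such partitions are in natural bijection with $\rstab_r(\pi)$ by $T \mapsto (R_1(T) \setminus [r],\dots,R_d(T) \setminus [r])$, so the expansion is naturally indexed by $r$-jellyfish tableaux.

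Second, I would convert each determinant factor to a minor of $M$. For $i < d$, since $I_i \subseteq S = [r+1,\nu]$, $E = [\nu+1,n]$, and $\pi_i + n \subseteq [n+1,2n]$ lie in three strictly increasing disjoint ranges of column indices, $\det(v_{I_i}, v_{E \cup (\pi_i + n)}) = \Delta_{I_i \cup E \cup (\pi_i + n)}(\Phi(M))$ with no reordering sign. Applying $\Phi^*$ via \eqref{eq:PlutoMinor} gives $(-1)^{r+\nu_i}\, M_{[r]\cup K_i}^{\pi_i}$, because $[n]\setminus(I_i \cup E) = [r] \cup K_i$. The final wedge $v_{K_1} \wedge \cdots \wedge v_{K_{d-1}} \wedge v_{E \cup (\pi_d + n)}$ evaluates to $\epsilon \cdot v_{(S \setminus K_d) \cup E \cup (\pi_d+n)}$, where $\epsilon = \epsilon(K_1,\dots,K_{d-1})$ is the shuffle sign sorting the concatenation $K_1 \cdot K_2 \cdots K_{d-1}$ into $S \setminus K_d$; by \eqref{eq:PlutoMinor} this scalar equals $\epsilon \cdot (-1)^{r+\nu_d}\, M_{[r] \cup K_d}^{\pi_d}$. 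Since $[r] \cup K_i = R_i(T)$, combining these factors gives
\[
\Phi^*([\pi]'_r) \;=\; \sum_{T \in \rstab_r(\pi)} \Sigma(T) \, \rspoly(T),
\]
where $\Sigma(T)$ is the product of the $d-1$ shuffle signs from the caps, the reordering sign $\epsilon$, and a global factor $(-1)^{(d-1)r + \nu}$ from the $(-1)^{r+\nu_i}$ terms.

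The main obstacle is verifying that $\Sigma(T) = \pm \sgn(T)$ with a $\pm$ sign independent of $T$, which would complete the proof by matching Definition~\ref{def:polynomial}. My strategy is to fix a reference tableau $T_0$ (say, the one with $K_i^{T_0}$ the $i$-th contiguous segment of $S$), compute $\Sigma(T_0)$ and $\sgn(T_0)$ directly to pin down the global sign, and then propagate to all of $\rstab_r(\pi)$ by verifying that the elementary ``toggle'' which exchanges two adjacent elements of $S$ between two of the $K_i$'s multiplies $\Sigma(T)$ and $\sgn(T)$ by the same factor $-1$. The $\sgn(T)$ side follows since such a toggle transposes a single adjacent pair in the row-reading word; the $\Sigma(T)$ side is a short computation using the standard sign-change formulas for shuffle permutations applied to the relevant cap and to $\epsilon$. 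Since adjacent toggles connect all valid partitions $(K_1,\dots,K_d)$, this reduces the global sign matching to the reference case. The bookkeeping is intricate but elementary.
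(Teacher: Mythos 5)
Your expansion of the Grassmann--Cayley expression is exactly the paper's: the caps are expanded via \eqref{eq:meetoperation}, the surviving tuples $(K_1,\dots,K_{d-1})$ of pairwise disjoint subsets of $S$ are identified with jellyfish tableaux (your $K_i$ are the paper's $S_{\tau,i}$), and the determinants are converted to the minors $M^{\pi_i}_{[r]\cup K_i}$ via \eqref{eq:PlutoMinor}; your accounting of the cap shuffle signs and the reordering sign $\epsilon$ also matches the paper's $(-1)^{\inv(\overline{S_i},S_i)}$ and $(-1)^{\inv(S_1,\dots,S_{d-1})}$. Where you diverge is the final step of showing $\Sigma(T)=\pm\sgn(T)$ with a $T$-independent sign. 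The paper does this in closed form: it factors $\sgn(T)$ as $\sgn(\word(\widehat{T}))\cdot(-1)^{\inv(S_1,\dots,S_d)}$ via the ``jellyfish pattern'' permutation, and then shows the ratio of the two signs is $(-1)^{\sum_i\inv(\overline{S_i},S_i)}$, which is global because each pair $u\in S_i$, $v\in S_j$ is counted exactly once (giving the explicit value $\tfrac12\sum_i\nu_i(\nu-|\pi_i|)$ and hence an explicit formula for the overall $\pm$). You instead propose a base-case-plus-propagation argument along adjacent toggles of the partition $(K_1,\dots,K_d)$ of $S$. This is sound: the toggle graph is connected (words with fixed content under adjacent transpositions), a toggle does transpose two adjacent letters of the reading word so $\sgn(T)$ flips, and the corresponding flip of $\Sigma(T)$ is a genuine (two-case, depending on whether $K_d$ is involved) but finite computation that does check out. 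The trade-off is that the paper's double-counting argument is shorter and yields the global sign explicitly, whereas your route defers that information to the reference-tableau computation and requires the $\Sigma$-side toggle verification to actually be written out --- that verification is the one step you have only asserted, and it is where the remaining work lies.
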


The proof of Proposition~\ref{prop:jellyfish_and_GC} yields a formula for this global sign, although it is somewhat complicated and perhaps not very useful. It is possible that this sign simplifies nicely, but we did not find such a simplification. An alternative way to handle the signs in practice is to expand both the left and right hand sides as polynomials in matrix minors and compare the sign of a matching pair of terms. Before proving Proposition~\ref{prop:jellyfish_and_GC}, we illustrate it with an example. 

\begin{example}\label{ex:cap}
In Example~\ref{ex:RStableaux}, recall $r=2$ and $\pi=(2~3~6~10 \mid 5~7~8~9 \mid 1~4)$. We have $\nu_1 = \nu_2 = 2$ and $\nu_3 = 0$, have $\nu = 6$, have $n=10$. Moreover, $S = \{3,4,5,6\}$ and $E = \{7,8,9,10\}$.
Hence, we have the function 
\[v_1,\dots,v_{20} \mapsto \left(v_{3,\dots,6} \cap v_{7,\dots,10, 12, 13, 16, 20} \right) \wedge \left(v_{3,\dots,6} \cap v_{7,\dots,10, 15, 17, 18, 19} \right) \wedge v_{7,\dots,10, 11, 14}.\]

We compute the first cap $v_{3,\dots,6} \cap v_{7,\dots,10, 12, 13, 16, 20}$ by choosing two out of the four vectors in $v_{3},\dots,v_{6}$ and moving them over to $v_7,\dots,v_{10},v_{12},v_{13},v_{16},v_{20}$ to form a determinant (picking up a sign in accordance with Equation~\eqref{eq:meetoperation}). Note, here $a=6$ and $b=2$, so $n -a-b = 10-6-2 = 2$.

The first cap is below, where in the summation, for each choice of $3 \leq i_1 < i_2 \leq 6$, we define $j_1,j_{2}$ to be the elements of $\{3,4,5,6\} \setminus \{i_1,i_2\}$ written in ascending order. 
\begin{align*}
v_{3,4,5,6} \cap v_{7,8,9,10,12,13,16,20} &= \\ \sum_{3\leq i_1 < i_2\leq 6} {\rm sign}(i_1,i_2,j_1,j_{2}) &\det\left(v_{i_1},v_{i_2}, v_{7}, v_{8}, v_{9}, v_{10},v_{12}, v_{13}, v_{16}, v_{20}\right)v_{j_1} \wedge v_{j_{2}}\nonumber \\
= {\rm sign}(3,4,5,6) &\det\left(v_{3},v_{4}, v_{7}, v_{8}, v_{9}, v_{10},v_{12}, v_{13}, v_{16}, v_{20}\right)v_{5} \wedge v_{6} \nonumber \\
+ {\rm sign}(3,5,4,6) &\det\left(v_{3},v_{5}, v_{7}, v_{8}, v_{9}, v_{10},v_{12}, v_{13}, v_{16}, v_{20}\right)v_{4} \wedge v_{6} \nonumber \\
+ {\rm sign}(3,6,4,5) &\det\left(v_{3},v_{6}, v_{7}, v_{8}, v_{9}, v_{10},v_{12}, v_{13}, v_{16}, v_{20}\right)v_{4} \wedge v_{5} \nonumber \\
+ {\rm sign}(4,5,3,6) &\det\left(v_{4},v_{5}, v_{7}, v_{8}, v_{9}, v_{10},v_{12}, v_{13}, v_{16}, v_{20}\right)v_{3} \wedge v_{6} \nonumber \\
+ {\rm sign}(4,6,3,5) &\det\left(v_{4},v_{6}, v_{7}, v_{8}, v_{9}, v_{10},v_{12}, v_{13}, v_{16}, v_{20}\right)v_{3} \wedge v_{5} \nonumber \\
+ {\rm sign}(5,6,3,4) &\det\left(v_{5},v_{6}, v_{7}, v_{8}, v_{9}, v_{10},v_{12}, v_{13}, v_{16}, v_{20}\right)v_{3} \wedge v_{4} \nonumber ,
\end{align*}

The second cap is, where again in the summation, for each choice of $i_1 < i_2$ we define $j_1,j_{2}$ to be the elements of $\{3,4,5,6\} \setminus \{i_1,i_2\}$ written in ascending order. 
\begin{align*}
v_{3,4,5,6} \cap v_{7,8,9,10,15,17,18,19} &= \\ \sum_{3\leq i_1 < i_2\leq 6} {\rm sign}(i_1,i_2,j_1,j_2) &\det\left(v_{i_1},v_{i_2}, v_{7}, v_{8}, v_{9}, v_{10}, v_{15}, v_{17}, v_{18}, v_{19}\right)v_{j_1} \wedge v_{j_{2}}\nonumber \\
= {\rm sign}(3,4,5,6) &\det\left(v_3,v_4, v_{7}, v_{8}, v_{9}, v_{10}, v_{15}, v_{17}, v_{18}, v_{19}\right)v_5 \wedge  v_6 \nonumber \\
+ {\rm sign}(3,5,4,6) &\det\left(v_{3},v_5, v_7, v_8, v_9, v_{10}, v_{15}, v_{17}, v_{18}, v_{19}\right)v_4 \wedge v_6 \nonumber \\
+ {\rm sign}(3,6,4,5) &\det\left(v_3,v_6, v_7, v_8, v_9, v_{10}, v_{15}, v_{17}, v_{18}, v_{19}\right)v_4 \wedge v_5 \nonumber \\
+ {\rm sign}(4,5,3,6) &\det\left(v_4,v_5,v_7, v_8, v_9, v_{10}, v_{15}, v_{17}, v_{18}, v_{19}\right)v_3 \wedge v_6 \nonumber \\
+ {\rm sign}(4,6,3,5) &\det\left(v_4,v_6,v_7, v_8, v_9, v_{10}, v_{15}, v_{17}, v_{18}, v_{19}\right)v_3 \wedge v_5 \nonumber \\
+ {\rm sign}(5,6,3,4) &\det\left(v_5,v_6,v_7, v_8, v_9, v_{10}, v_{15}, v_{17}, v_{18}, v_{19}\right) v_3 \wedge v_4 \nonumber
\end{align*}

Wedging these two caps together, dropping the terms that are zero, and noting that, for example, we have ${\rm sign}(4,6,3,5)={\rm sign}(3,5,4,6)$, we then obtain
{\small 
\begin{align*}
    (&v_{{3},{4},{5},{6}} \cap v_{7, 8, 9, 10, 12,13,16,{20}}) \wedge (v_{3,4,5,6} \cap v_{7,8,9,10, 15, 17, 18, 19})   \\
     &=\det\left(v_{3},v_{4}, v_{7}, v_{8}, v_{9}, v_{10}, v_{12}, v_{13}, v_{16}, v_{20}\right)  \det\left(v_{5},v_{6},v_7,v_8,v_9,v_{10},v_{15}, v_{17}, v_{18}, v_{19}\right) v_{5} \wedge v_{6} \wedge v_{3} \wedge v_{4}\nonumber \\
     &+ \det\left(v_{3},v_{5},v_{7}, v_{8}, v_{9}, v_{10}, v_{12}, v_{13}, v_{16}, v_{20}\right) \det\left(v_{4},v_{6},v_7,v_8,v_9,v_{10},v_{15}, v_{17}, v_{18}, v_{19}\right) v_{4} \wedge v_{6} \wedge v_{3} \wedge v_{5} \nonumber \\
     &+\det\left(v_{3},v_{6},v_{7}, v_{8}, v_{9}, v_{10}, v_{12}, v_{13}, v_{16}, v_{20}\right) \det\left(v_{4},v_{5},v_7,v_8,v_9,v_{10},v_{15}, v_{17}, v_{18}, v_{19}\right) v_{4} \wedge v_{5} \wedge v_{3} \wedge v_{6} \nonumber \\
     &+ \det\left(v_{4},v_{5},v_{7}, v_{8}, v_{9}, v_{10}, v_{12}, v_{13}, v_{16}, v_{20}\right) \det\left(v_{3},v_{6},v_7,v_8,v_9,v_{10},v_{15}, v_{17}, v_{18}, v_{19}\right) v_{3} \wedge v_{6} \wedge v_{4} \wedge v_{5} \nonumber \\
     &+ \det\left(v_{4},v_{6},v_{7}, v_{8}, v_{9}, v_{10}, v_{12}, v_{13}, v_{16}, v_{20}\right) \det\left(v_{3},v_{5},v_7,v_8,v_9,v_{10},v_{15}, v_{17}, v_{18}, v_{19}\right) v_{3} \wedge v_{5}\wedge v_{4} \wedge v_{6} \nonumber \\
     &+\det\left(v_{5},v_{6},v_{7}, v_{8}, v_{9}, v_{10}, v_{12}, v_{13}, v_{16}, v_{20}\right) \det\left(v_{3},v_{4},v_7,v_8,v_9,v_{10},v_{15}, v_{17}, v_{18}, v_{19}\right) v_{3} \wedge v_{4} \wedge v_{5} \wedge v_{6} \nonumber \\
     &= \Bigg( \det\left(v_{3},v_{4}, v_{7}, v_{8}, v_{9}, v_{10}, v_{12}, v_{13}, v_{16}, v_{20}\right)  \det\left(v_{5},v_{6},v_7,v_8,v_9,v_{10},v_{15}, v_{17}, v_{18}, v_{19}\right) \nonumber \\
     &- \det\left(v_{3},v_{5},v_{7}, v_{8}, v_{9}, v_{10}, v_{12}, v_{13}, v_{16}, v_{20}\right) \det\left(v_{4},v_{6},v_7,v_8,v_9,v_{10},v_{15}, v_{17}, v_{18}, v_{19}\right) \nonumber \\
     &+ \det\left(v_{3},v_{6},v_{7}, v_{8}, v_{9}, v_{10}, v_{12}, v_{13}, v_{16}, v_{20}\right) \det\left(v_{4},v_{5},v_7,v_8,v_9,v_{10},v_{15}, v_{17}, v_{18}, v_{19}\right) \nonumber \\
     &+ \det\left(v_{4},v_{5},v_{7}, v_{8}, v_{9}, v_{10}, v_{12}, v_{13}, v_{16}, v_{20}\right) \det\left(v_{3},v_{6},v_7,v_8,v_9,v_{10},v_{15}, v_{17}, v_{18}, v_{19}\right) \nonumber \\
     &- \det\left(v_{4},v_{6},v_{7}, v_{8}, v_{9}, v_{10}, v_{12}, v_{13}, v_{16}, v_{20}\right) \det\left(v_{3},v_{5},v_7,v_8,v_9,v_{10},v_{15}, v_{17}, v_{18}, v_{19}\right) \nonumber \\
     &+ \det\left(v_{5},v_{6},v_{7}, v_{8}, v_{9}, v_{10}, v_{12}, v_{13}, v_{16}, v_{20}\right) \det\left(v_{3},v_{4},v_7,v_8,v_9,v_{10},v_{15}, v_{17}, v_{18}, v_{19}\right) \Bigg)\cdot(v_{3} \wedge v_{4} \wedge v_{5} \wedge v_{6}). \nonumber
\end{align*}
}

Finally, we wedge with $v_{7,8,9,10,11,14}$ to get  a 10-fold wedge $v_{3,4,5,6,7,8,9,10,11,14}$, which is the determinant $\det\left(v_3,v_4,v_5, v_6, v_7, v_8, v_9, v_{10}, v_{11}, v_{14}\right).$

Thus we have 
\begin{align}
(v_{3,\dots,6} & \cap v_{7,\dots,10,12,13,16,20} ) \wedge 
\left(v_{3,\dots,6} \cap v_{7,\dots,10,15,17,18,19} \right) \wedge v_{7,\dots,10,11,14} \nonumber \\
 &= \det\left(v_3,v_4,v_5, v_6, v_7, v_8, v_9, v_{10},v_{11}, v_{14}\right) \cdot \nonumber \\
&\Bigg( \det\left(v_{3},v_{4}, v_{7}, v_{8}, v_{9}, v_{10}, v_{12}, v_{13}, v_{16}, v_{20}\right)  \det\left(v_{5},v_{6},v_7,v_8,v_9,v_{10},v_{15}, v_{17}, v_{18}, v_{19}\right) \nonumber \\
     &- \det\left(v_{3},v_{5},v_{7}, v_{8}, v_{9}, v_{10}, v_{12}, v_{13}, v_{16}, v_{20}\right) \det\left(v_{4},v_{6},v_7,v_8,v_9,v_{10},v_{15}, v_{17}, v_{18}, v_{19}\right) \nonumber \\
     &+ \det\left(v_{3},v_{6},v_{7}, v_{8}, v_{9}, v_{10}, v_{12}, v_{13}, v_{16}, v_{20}\right) \det\left(v_{4},v_{5},v_7,v_8,v_9,v_{10},v_{15}, v_{17}, v_{18}, v_{19}\right) \nonumber \\
     &+ \det\left(v_{4},v_{5},v_{7}, v_{8}, v_{9}, v_{10}, v_{12}, v_{13}, v_{16}, v_{20}\right) \det\left(v_{3},v_{6},v_7,v_8,v_9,v_{10},v_{15}, v_{17}, v_{18}, v_{19}\right) \nonumber \\
     &- \det\left(v_{4},v_{6},v_{7}, v_{8}, v_{9}, v_{10}, v_{12}, v_{13}, v_{16}, v_{20}\right) \det\left(v_{3},v_{5},v_7,v_8,v_9,v_{10},v_{15}, v_{17}, v_{18}, v_{19}\right) \nonumber \\
     &+ \det\left(v_{5},v_{6},v_{7}, v_{8}, v_{9}, v_{10}, v_{12}, v_{13}, v_{16}, v_{20}\right) \det\left(v_{3},v_{4},v_7,v_8,v_9,v_{10},v_{15}, v_{17}, v_{18}, v_{19}\right) \Bigg). \nonumber
\end{align}

Now replace the determinants with the appropriate $M^J_I$ terms using Equation (\ref{eq:PlutoMinor}) (noting that $|I| = 2$, so we can drop the global signs $(-1)^{|I|}$ in this example):

\begin{align*}M_{1,2}^{1,4}\Bigg( M_{1,2,5,6}^{2,3,6,10}\cdot M_{1,2,3,4}^{5,7,8,9}
&-M_{1,2,4,6}^{2,3,6,10}\cdot M_{1,2,3,5}^{5,7,8,9}
+M_{1,2,4,5}^{2,3,6,10}\cdot M_{1,2,3,6}^{5,7,8,9}
+M_{1,2,3,6}^{2,3,6,10}\cdot M_{1,2,4,5}^{5,7,8,9}\\ &-M_{1,2,3,5}^{2,3,6,10}\cdot M_{1,2,4,6}^{5,7,8,9}+M_{1,2,3,4}^{2,3,6,10}\cdot M_{1,2,5,6}^{5,7,8,9}\Bigg).\end{align*}
Note that these are exactly the terms appearing in Example~\ref{ex:RStableaux} (written in reverse order) and with the same signs as appear there.
\end{example}

\begin{proof}[Proof of Proposition~\ref{prop:jellyfish_and_GC}]
To compute the right-hand side of Equation~\eqref{eq:meetjellyfish}, we first move $\nu - |\pi_i| = \nu - r - \nu_i$ vectors from the left side of the $i$th $\cap$ (that is, $v_S$) to its right side (that is, $v_{E \cup (\pi_i + n)}$). For each term $\tau$ of the right side and for each $i=1,\dots,d-1$, let $S_{\tau,i} \subseteq S$ be the set of vectors that are \emph{not} moved into the determinant and let $\overline{S_{\tau,i}} = S \setminus S_{\tau,i}$ be the set of vectors that are moved. This turns $\tau$ into an $n$ by $n$ determinant and picks up the sign $(-1)^{\inv(\overline{S_{\tau,i}}, S_{\tau,i})}$ from the definition of $\cap$ in Equation~\eqref{eq:meetoperation}.
Next, we wedge together all of the vectors that are {\sl not} moved together with the $v_{E \cup (\pi_d+n)}$ term. The result is a degree $d$ monomial in Pl\"ucker coordinates together with a sign. The function \eqref{eq:meetjellyfish} is the signed sum of these monomials. 

Note that $|S_{\tau,i}| = \nu_i$ does not depend on $\tau$. We define $S_{\tau,d}$ to be $S \setminus \bigcup_{i=1}^{d-1}S_{\tau,i}$. In order to get a nonzero evaluation, we need the disjoint union $S_{\tau,1} \sqcup \cdots \sqcup S_{\tau, d}$ to  equal  $S$. In other words, we need the $S_{\tau,i}$ sets to be pairwise disjoint.

The term $\tau$ is then the product 
\begin{equation}
\pm \prod_{i=1}^d\Delta_{\overline{S_{\tau,i}} \cup E \cup (\pi_i+n)}
\end{equation}
of Pl\"ucker coordinates.
Applying $\Phi^*$ to this, by Equation~\eqref{eq:PlutoMinor}, we get 
\begin{equation}
\pm \prod_{i=1}^d M^{\pi_i}_{[r] \cup S_{\tau,i}}.
\end{equation}

Consider the jellyfish tableau $T(\tau) \in \mathcal{J}_r(\pi)$ that uses rows $S_{\tau,i} \cup [r]$ in column $i$. (Clearly, the correspondence between terms $\tau$ and jellyfish tableaux $T(\tau)$ is bijective.) The above monomial in matrix minors matches a the term in the definition of $[\pi_r]$ corresponding to $T(\tau)$. What remains to be checked is that the signs implicit in \eqref{eq:meetjellyfish} match up with the signs from the definition of $[\pi]_r$ (i.e., the sign of the reading word of~$T(\tau)$). For the purposes of this analysis, we say a quantity is \emph{global} if it depends only on~$\pi$, i.e. not on the specific jellyfish tableau~$T(\tau)$ encoded by the specific sets $S_{\tau,1},\dots,S_{\tau,d}$. For example, the $\nu_i$'s are global quantities. 

Now, fix $\tau$ and write $T = T(\tau)$. We first unpack the sign of the reading word of $T$. Let $\word(T)$ be the reading word of $T$. We let $\widehat{T}$ be the ``greedily top-justified left to right'' jellyfish tableau in $\mathcal{J}_r(\pi)$. In Example~\ref{ex:RStableaux}, $\widehat{T}$ is the first tableau, and thus $\word(\widehat{T}) = 2\ 5\ 1\ 3\ 7\ 4\ 6\ 10\ 8\ 9$. We define a permutation $\jp_T$ to be $\word(\widehat{T})^{-1} \circ \word(T)$; thus $\jp_T$ encodes the ``jellyfish pattern'' of $T$. 
In Example~\ref{ex:RStableaux}, the first $6$ symbols of $\jp_T$ are always $1,\dots,6$. (In general, the first $rd$ symbols will be $1, \dots, rd$.)
The last four symbols in our example are 
\[7,8,9,10 \text{ or } 7,9,8,10 \text{ or } 7,9,10,8 \text{ or } 9,7,8,10 \text{ or } 9,7,10,8 \text{ or } 9,10,7,8,\]
respectively, in the same order as the jellyfish tableaux of Example~\ref{ex:RStableaux}.

Letting $S_i = S_{\tau, i}$ be the row indices $>r$ that are used column $i$ of $T$ as above, we see that
\begin{equation}
\inv(\jp_T) = \# \{u<v \in [n] \colon u \in S_i, \, v \in S_j \text{ and } i>j \},
\end{equation}
which we can write as $\inv(S_1,\dots,S_d)$ (with the number of inversions of a set-tuple defined to be the number of inversions of the permutation obtained by listing the elements in each set in increasing order). Since $\word(\widehat{T})$ is a global quantity, it suffices to show that $\inv(\jp_T)$ differs from the exterior algebra sign we discuss below by a global sign. 

Secondly, we unpack the exterior algebra signs associated with the calculation of the term $\tau$ in $[\pi]'_r$. The first type of sign contribution to $\tau$ comes due to bringing the set $S_i$ to the back of the ordered set $S$; hence, this  first sign is \[
\prod_{i=1}^{d-1} (-1)^{\inv(\overline{S_i},S_i)}.
\]
The second sign is the sign  $(-1)^{\inv(S_1,\dots,S_{d-1})}$ coming from wedging together the unmoved vectors for $i=1,\dots,d-1$ in the wedge product. 

We have $\inv(S_1,\dots,S_{d}) = \inv(S_1,\dots,S_{d-1})+\inv(\overline{S_d}, S_{d})$. 
Therefore, the sign $(-1)^{\inv(S_1,\dots,S_{d})}$ computed from the jellyfish tableau formula differs from the sign computed from Equation~\eqref{eq:meetoperation} by
\[
(-1)^{\sum_{i=1}^d \inv(\overline{S_i}, S_{i})}.
\]
Hence, it suffices to show that this number is a global quantity. A pair $u,v$ with $u \in S_i$ and $v \in S_j$ appears exactly once in this sum (it appears in the term indexed by $i$ if $v<u$ and otherwise appears in the term indexed by $j$). The number of such pairs only depends on the block sizes and thus is is a global quantity. (Specifically, it is $\frac{1}{2} \sum_{i=1}^d \nu_i(\nu-|\pi_i|)$.) This completes the proof.

Note that we have in fact shown that the sign appearing in the proposition is \[(-1)^{\inv(\word(\widehat{T})) + \frac{1}{2} \sum_{i=1}^d \nu_i(\nu-|\pi_i|) }. \qedhere\]
\end{proof}

\subsection{Jellyfish invariants as tensor diagram invariants}
As explained in Remark~\ref{rmk:WLOG}, any Grassmann--Cayley expression obtained by iterated composition of $\wedge$ and $\cap$ operations can be expressed as a tensor diagram invariant. In this section, we give a combinatorial construction of the tensor diagram that underlies the expression \eqref{eq:meetjellyfish}.

\begin{definition}\label{def:tensor_diagram_from_pi}
Given an ordered set partition $\pi \in \ordsetpart(n,d,r)$, 
we associate a tensor diagram $W_{\pi,r}$ of type $(n,2n)$ using the following steps. Recall we have set $S = [r+1,\nu]$ and $E = [\nu+1,n]$.  
\begin{enumerate}
\item 
For $i \in [d]$, introduce a interior white vertex $w_i$ and draw edges from $w_i$ to every boundary vertex in $E$, as well as to every boundary vertex in $\pi_i + n$. All of these edges receive weight $1$.
\item For $i\in [d-1]$, introduce an interior white vertex $u_i$ and draw edges from $u_i$ to every boundary vertex in $S$. These edges also all receive weight $1$.
\item For $i \in [d-1]$, introduce an interior black vertex $b_i$. For $i \in [d-1]$, draw an edge between each $b_i$ and the corresponding $w_i$ and give this edge weight $\nu-|\pi_i|$. Also, draw an edge between each $b_i$ and the corresponding $u_i$, giving this edge weight $n-\nu+r =rd$. Finally, draw an edge between each $b_i$ and the vertex $w_d$, giving this edge weight $\nu_i$. 
\end{enumerate}
\end{definition}

\begin{figure}[htbp]
\begin{center}
\includegraphics[width=5in]{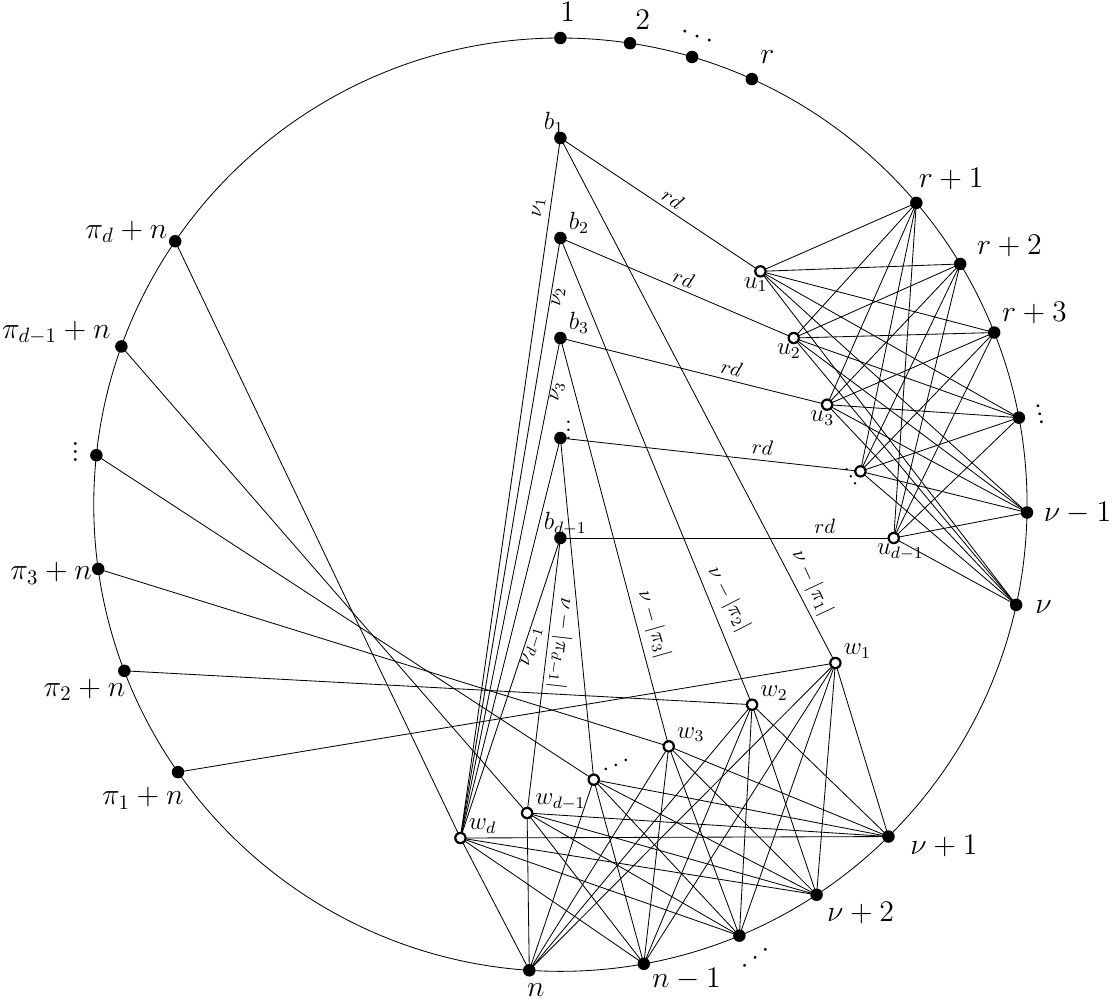}
\end{center}
\caption{A schematic tensor diagram $W_{\pi,r}$ illustrating the algorithm in Definition~\ref{def:tensor_diagram_from_pi}. Vertices $1,\dots,r$ are not used. Vertices in $S=\{r+1,\ldots,\nu\}$ and $E=\{\nu+1,\ldots,n\}$ are bookkeeping vertices such that each $u_i$ connects to all vertices in $S$ and each $w_i$ connects to all vertices in $E$. The vertices $\pi_i +n$ are schematic, and they encode the set partition $\pi$ in the sense that $w_i$ is connected to all of the $|\pi_i|$ boundary vertices in $\pi_i+n$. Note these vertices will not usually be cyclically ordered as we have drawn them here. See Figure~\ref{fig:running_tensor} for an explicit example.}
\label{fig:schematic}
\end{figure}

\begin{figure}[htbp]
    \centering
\includegraphics[width=5in]{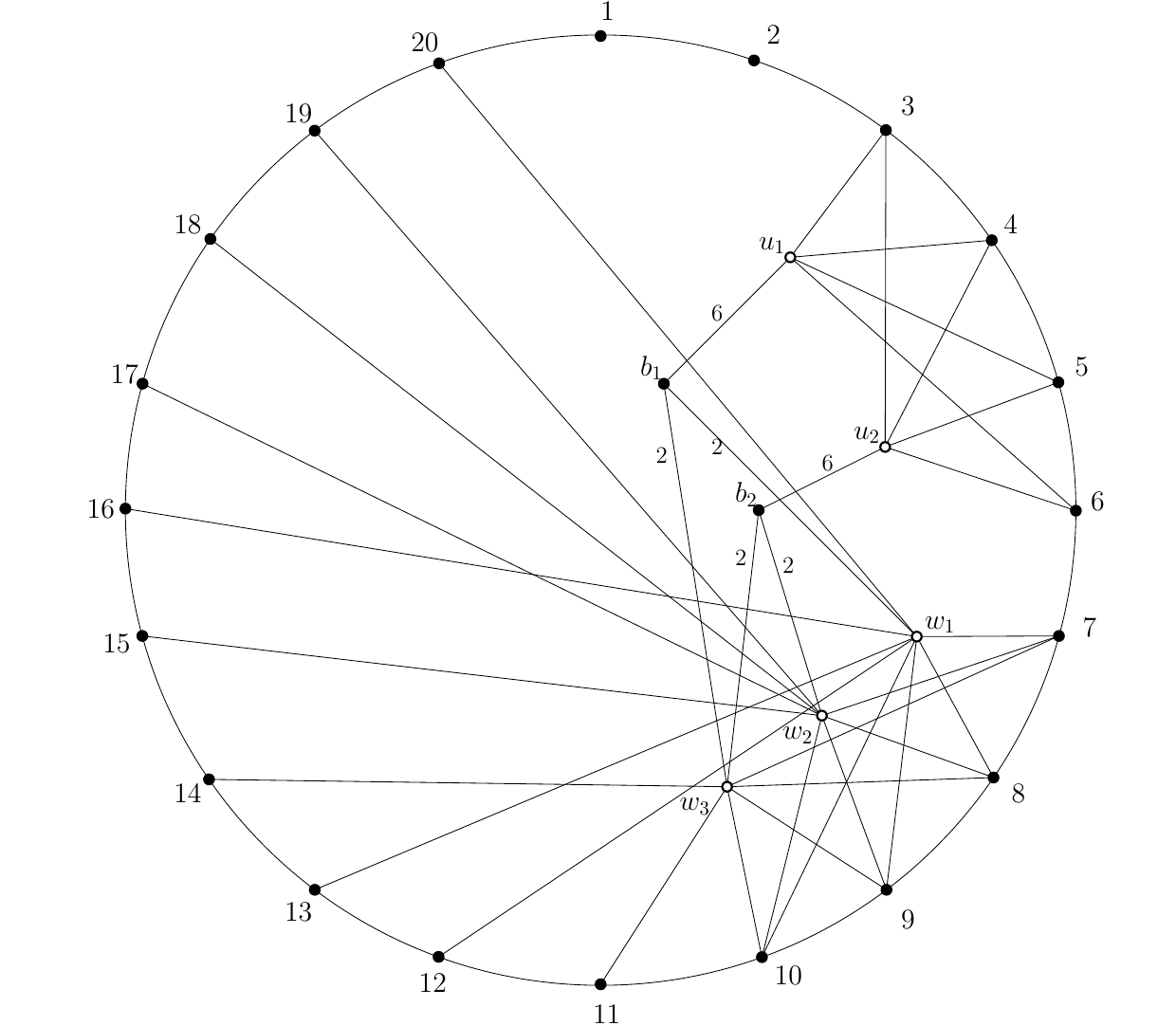}
    \caption{The tensor diagram obtained by Definition~\ref{def:tensor_diagram_from_pi} from ordered set partition $\pi=(2\ 3\ 6\ 10 \mid 5 \ 7\ 8\ 9 \mid 1\ 4)$ from Examples~\ref{ex:RStableaux} and \ref{ex:cap}. Note for example that $w_1$ connects to boundary vertices $12$, $13$, 16, and 20, which is $\pi_1+n$ in this example.}
    \label{fig:running_tensor}
\end{figure}

See Figure~\ref{fig:schematic} for a schematic diagram of this definition and Figure~\ref{fig:running_tensor} for an example. Clearly the tensor diagram $W_{\pi,r}$ is typically highly nonplanar. 

\begin{lemma}
    The diagram $W_{\pi,r}$ constructed in Definition~\ref{def:tensor_diagram_from_pi} is a tensor diagram of type $(n,2n)$.
\end{lemma}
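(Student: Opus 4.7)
The plan is to verify, in turn, each requirement of Definition~\ref{defn:tensordiagram}: namely (a) every marked boundary point $1,\ldots,2n$ is a (black) vertex of $W_{\pi,r}$; (b) the graph is bipartite, with every interior white adjacent only to blacks; (c) each edge carries a weight in $\{0,1,\ldots,n\}$, with weight $0$ interpreted as an absent edge; and (d) at every interior vertex the sum of incident edge weights equals~$n$.

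Properties (a)--(c) follow essentially by inspection of Definition~\ref{def:tensor_diagram_from_pi}. The interior whites are $w_1,\ldots,w_d$ and $u_1,\ldots,u_{d-1}$, the interior blacks are $b_1,\ldots,b_{d-1}$, and boundary vertices are by convention all black. In steps (1) and (2) of the construction the whites $w_i, u_i$ are joined only to boundary blacks, while in step (3) each $b_i$ is joined only to whites, so the bipartiteness condition holds. (The boundary vertices $1,\ldots,r$ happen to be isolated, which is permitted since the summation condition constrains only interior vertices.) The edge weights that appear are $1$, $\nu-|\pi_i|=\sum_{j\neq i}\nu_j$, $n-\nu+r$, and $\nu_i=|\pi_i|-r$; using $\nu=r+\sum_j\nu_j$ and $\sum_j|\pi_j|=n$, all four are nonnegative integers bounded above by $n$, and any vanishing weight is to be interpreted as an absent edge per the convention stated in Definition~\ref{defn:tensordiagram}.

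The substantive content is property (d). The key numerical identity is $\nu = n-(d-1)r$, so that $|E| = n-\nu = (d-1)r$ and $n-\nu+r = rd$; I would also use $|S| = \nu-r$, $|\pi_i| = r+\nu_i$, and $\sum_{i=1}^d\nu_i = \nu-r$. The required local checks are then short. At $w_i$ with $i<d$ the weighted degree is $|E| + |\pi_i| + (\nu-|\pi_i|) = (n-\nu)+\nu = n$. At $u_i$ it is $|S| + (n-\nu+r) = (\nu-r)+(n-\nu+r) = n$. At $b_i$ it is $(\nu-|\pi_i|) + (n-\nu+r) + \nu_i = (\nu-r-\nu_i)+rd+\nu_i = n$. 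Finally, at the distinguished vertex $w_d$, which receives one edge of weight $\nu_i$ from each $b_i$, the weighted degree is $|E| + |\pi_d| + \sum_{i=1}^{d-1}\nu_i = (n-\nu)+(r+\nu_d)+(\nu-r-\nu_d) = n$.

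The only real obstacle is the bookkeeping at the asymmetric vertex $w_d$, which aggregates contributions from every $b_i$ and so relies on the telescoping identity $\sum_{i=1}^{d-1}\nu_i = \nu-r-\nu_d$; each of the other interior vertices is purely local, so the verification reduces to the short identities displayed above.
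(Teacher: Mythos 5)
Your proposal is correct and follows essentially the same route as the paper's proof: the core of both arguments is the local check that the weighted degree at each of the four types of interior vertices ($w_i$ for $i<d$, $w_d$, $u_i$, $b_i$) equals $n$, using the identities $\nu=n-(d-1)r$, $|S|=\nu-r$, $|E|=n-\nu$, and $\sum_i\nu_i=\nu-r$. The only difference is that you also spell out the routine structural checks (bipartiteness, boundary vertices, weight ranges) that the paper dismisses with ``it is enough to verify the weight sums,'' which is fine but not a substantive divergence.
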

\begin{proof}
    It is enough to verify that sum of weights around every internal vertex is $n$.

   For $i\in [d-1]$, each $w_i$  has 
\begin{itemize}
    \item $|\pi_i|$ edges of weight $1$ connecting $w_i$ to the boundary vertices in $\pi_i$,
    \item $n-\nu$ edges of weight $1$ connecting $w_i$ to the boundary vertices in $E$, and
    \item $1$ edge of weight $\nu-|\pi_i|$ connecting $w_i$ to $b_i$.
\end{itemize} 
Thus, the edges incident at $w_i$ have weights summing to \[n - \nu + \nu - |\pi_i| + |\pi_i| =n,\] as desired.
 
 The vertex $w_d$ has 
 \begin{itemize}
     \item $1$ edge of weight $\nu_i$ for each $i\in[d-1]$ connecting $w_d$ to $b_i$,
     \item $|\pi_d|=\nu_d+r$ edges of weight $1$ connecting $w_d$ to the boundary vertices in $\pi_d$, and
     \item $n-\nu = rd-r$ edges of weight $1$ connecting $w_i$ to the boundary vertices in $E$.
 \end{itemize}
 Thus, the edges incident at $w_d$ have weights summing to \[\Bigg( \sum_{i=1}^{d-1}\nu_i \Bigg) +\nu_d +r +rd-r = 
 rd + \sum_{i=1}^{d}\nu_i =n,\]
 as desired.

Each vertex $u_i$ has 
\begin{itemize}
    \item $1$ edge of weight $n-\nu+r$ connecting $u_i$ to the corresponding $b_i$ and
    \item $\nu-r$ edges of weight $1$ connecting $u_i$ to the boundary vertices in $S$,
\end{itemize}
giving a sum of $n$.

Finally, each vertex $b_i$ has 
\begin{itemize}
    \item $1$ edge of weight $n-\nu+r$ connecting $b_i$ and $u_i$,
    \item $1$ edge of weight $\nu-|\pi_i|=\nu-(r+\nu_i)$ connecting $b_i$ and $w_i$, and 
    \item $1$ edge of weight $\nu_i$ connecting $b_i$ and $w_d$.
\end{itemize}
Thus, the edges incident to $b_i$ have weight summing to 
\[
n-\nu + r + \nu-(r+\nu_i) +\nu_i= n,
\]
completing the proof.
\end{proof}

\begin{proposition}\label{prop:webs_are_tensors}
The tensor diagram $W_{\pi,r}$ encodes the Grassmann--Cayley expression \eqref{eq:meetjellyfish} up to sign.
\end{proposition}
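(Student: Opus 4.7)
The plan is to combine Remark~\ref{rmk:WLOG} with the tagged-to-untagged conversion described in Section~\ref{sec:TDbackground} (following \cite[Remark~3.3]{Fraser.Lam.Le}): I will show that the natural tagged tensor diagram realizing the Grassmann--Cayley expression \eqref{eq:meetjellyfish} converts to the diagram $W_{\pi,r}$ of Definition~\ref{def:tensor_diagram_from_pi}, and then invoke the equivalence of tagged and untagged invariants to conclude agreement up to sign.

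I would build the tagged tensor diagram $\widehat{W}_{\pi,r}$ by unfolding \eqref{eq:meetjellyfish} one $\cap$ at a time. For each $i \in [d-1]$, realize $v_S \cap v_{E \cup (\pi_i+n)}$ as follows. A white vertex $u_i$ receives weight-$1$ incoming edges from the boundary vertices of $S$ and emits an outgoing edge of weight $\nu-r$ carrying $v_S \in \bigwedge^{\nu-r}V$. That edge enters a black vertex $b_i$ where the dual exterior product $\psi^{\nu_i,\,\nu-|\pi_i|}$ is applied, yielding two outgoing edges of weights $\nu_i$ and $\nu-|\pi_i|$. The weight-$(\nu-|\pi_i|)$ edge terminates at a white vertex $w_i$ that also receives weight-$1$ edges from every boundary vertex of $E \cup (\pi_i+n)$, so that $w_i$ realizes the wedge in the identity of Remark~\ref{rmk:WLOG} and outputs a scalar. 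The outer $\wedge$ together with the final factor $v_{E\cup(\pi_d+n)}$ is handled by a single white vertex $w_d$ that receives the weight-$\nu_i$ outputs from each $b_i$ along with weight-$1$ edges from every boundary vertex of $E \cup (\pi_d + n)$, again outputting a scalar. The weight checks at each interior vertex reduce to those already carried out in the lemma preceding the proposition, so $\widehat{W}_{\pi,r}$ is a valid tagged diagram, and its invariant equals $[\pi]'_r$ by construction.

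To convert to an untagged diagram, observe that the orientation inherited from the computation is a perfect orientation: every interior white vertex has outdegree exactly one (either the edge to its $b_i$, or a tag for its scalar output) and every black vertex $b_i$ has indegree exactly one (the edge from $u_i$). Complementing the weight on every edge oriented from white to black---these are exactly the $u_i \to b_i$ edges, whose weights change from $\nu-r$ to $n-(\nu-r) = rd$---and then forgetting orientations and output tags recovers the bipartite graph of Definition~\ref{def:tensor_diagram_from_pi} with the prescribed weights, as a direct side-by-side comparison shows. There are no boundary sinks, and the underlying directed graph is acyclic (boundary vertices feed into the $u_i$'s, which feed into the $b_i$'s, which feed into the $w_j$'s), so the conversion introduces no additional tags; hence the equivalence result \cite[Lemma~5.4]{Fraser.Lam.Le} (in its nonplanar form discussed after Definition~\ref{defn:tensordiagram}) yields $[W_{\pi,r}] = \pm [\pi]'_r$.

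The main obstacle is the weight and orientation bookkeeping: confirming that $\psi^{s,t}$ produces exactly the pair of outgoing weights $(\nu_i,\,\nu-|\pi_i|)$ specified in Definition~\ref{def:tensor_diagram_from_pi}, and that the orientation is perfect so that no extra tags are spawned by the conversion. These are all local weight checks. The method does not pin down the global sign; extracting it would require carefully tracking the signs in the dual exterior product against the signs in the global definition \eqref{eq:globaldefinition} of $[W_{\pi,r}]$, and is not needed for the statement.
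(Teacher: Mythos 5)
Your overall strategy matches the paper's: build a tagged tensor diagram computing \eqref{eq:meetjellyfish}, relate it to $W_{\pi,r}$ via the perfect-orientation machinery of Section~\ref{sec:TDbackground}, and invoke the tagged/untagged equivalence. However, there is a genuine gap in the middle step. The orientation you describe is \emph{not} a perfect orientation of $W_{\pi,r}$: in the bipartite graph of Definition~\ref{def:tensor_diagram_from_pi}, every edge incident to $w_i$ (the weight-$1$ edges to the boundary vertices of $E\cup(\pi_i+n)$ and the edge to $b_i$) is incoming under your orientation, so $w_i$ has outdegree $0$, whereas a perfect orientation requires each white vertex to have outdegree exactly one \emph{within the graph itself}. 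The ``tag for its scalar output'' you invoke is not an edge of $W_{\pi,r}$ and cannot supply the missing outdegree. To obtain a genuine perfect orientation one must orient some edge out of each $w_i$ --- the paper orients $w_i \to n$ --- and this creates a boundary sink at vertex $n$, so the conversion \emph{does} spawn tags, contrary to your claim, and it places them on the edges between the $w_i$'s and the boundary vertex $n$ rather than at the $b_i$--$w_i$ junctions where your Grassmann--Cayley diagram carries its determinant tags.

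Consequently the two tagged diagrams --- the one you build from \eqref{eq:meetjellyfish} and the one produced from $W_{\pi,r}$ by a perfect orientation --- are not equal, and \cite[Lemma~5.4]{Fraser.Lam.Le} alone does not close the argument: that lemma compares $[W_{\pi,r}]$ with the tagging coming from a perfect orientation, not with an arbitrary tagging of the same underlying graph. The missing ingredient is the tag migration move \cite[Equation~2.7]{Cautis.Kamnitzer.Morrison}, which lets one slide the tags from the edges at boundary vertex $n$ to the $b_i$--$w_i$ edges at the cost only of signs; this is exactly how the paper reconciles the two taggings and completes the proof. Your local weight bookkeeping (the complementation $\nu-r \mapsto rd$ on the $u_i\to b_i$ edges, and the output weights $(\nu_i,\nu-|\pi_i|)$ of the dual exterior product at $b_i$) is correct and agrees with the paper, so the repair is localized to the orientation and tag-placement step.
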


\begin{proof}
The structure of the proof is as follows. First, we will interpret the Grassmann--Cayley expression \eqref{eq:meetoperation} as a sequence of $\wedge$ and dual exterior product operations, giving rise to a tagged tensor diagram $W_1$. By construction, this tagged tensor diagram $W_1$ computes the Grassmann--Cayley expression. Second, we will choose a tagging $W_2$ of $W_{\pi,r}$ by choosing a perfect orientation of the latter graph. Then we can apply \cite[Lemma 5.4]{Fraser.Lam.Le}, which asserts that the tagged tensor invariant $[W_2]$ agrees with $[W_{\pi,r}]$ up to sign.  Finally we will argue that $[W_1]$ and $[W_2]$ agree up to sign, completing the proof. 

For the first step, we express the Grassmann--Cayley expression in terms of tagged tensor diagram constructions. The tensor $v_{E\cup (\pi_i+n)}$ is diagrammatically encoded by an interior white vertex $w_i$ with incoming edges of weight 1 from the boundary vertices in $E \cup (\pi_i+n)$, for $i=1,\dots,d$ and with one outgoing edge of weight $|E|+|\pi_i|$. The tensor $v_S$ is encoded by a white vertex $u_i$ with incoming edges of weight 1 from the boundary vertices in $S$ and with one outgoing edge of weight $|S|$, for $i=1,\dots,d-1$. The cap $v_S \cap v_{E\cup (\pi_i+n)}$ of these tensors is encoded by a black vertex $b_i$ whose unique incoming edge emanates from vertex $u_i$ and which has two outgoing edges. The first of these outgoing edges has weight $n-|E| - |\pi_i| = \nu-|\pi_i|$ and meets the outgoing edge emanating from $w_i$ at a tag; this tag represents the explicit determinant in the definition of the meet operation \eqref{eq:meetoperation}. The second outgoing edge emanating from $b_i$ has weight $\nu_i$, representing the wedge after the determinant in  \eqref{eq:meetoperation}. It meets the edge emanating from $w_d$ at a tag, computing the wedge of the tensors just mentioned with the last factor $v_{E \cup (\pi_d+n)}$ in \eqref{eq:meetjellyfish}. Call the resulting tagged tensor diagram $W_1$. 

Second, we obtain a perfect orientation of $W_{\pi,r}$ as follows. Identify the sets $S$ and $E$ with their corresponding boundary vertices. We orient edges $S \to u_i$ for $i=1,\dots,d-1$; $u_i \to b_i$ for $i=1,\dots,d-1$; $b_i \to w_i$ and $b_i \to w_d$ for $i=1,\dots,d-1$. We orient edges $E \setminus \{n\} \to w_i$ and $w_i \to n$ for $i=1,\dots,d$. Applying the construction from Section~\ref{sec:TDbackground}, we obtain a tagged diagram $W_2$. 

To complete the proof, note that $W_1$ and $W_2$ differ only by applying tag migration moves \cite[Equation 2.7]{Cautis.Kamnitzer.Morrison}, migrating the tagged edge between boundary vertex $n$ and $w_i$ to the edges between the $b_i$'s and $w_i$'s. Tag migration moves only change sign, completing the proof.
\end{proof}

We now conclude the main result of this section.
That is, up to sign and application of the translation map $\Phi^*$, $r$-jellyfish invariants associated to ordered set partitions are tensor diagram invariants in the classical sense. 
\begin{theorem}
For any ordered set partition $\pi \in \ordsetpart(n,d,r)$, the corresponding $r$-jellyfish invariant is a tensor invariant; precisely, we have \[[\pi]_r = \pm \, \Phi^*([W_{\pi,r}]).\]
\end{theorem}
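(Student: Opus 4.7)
The plan is straightforward: this theorem is essentially a corollary obtained by chaining the two main propositions proved earlier in the section. Specifically, Proposition~\ref{prop:jellyfish_and_GC} gives the equality $\Phi^*([\pi]'_r) = \pm [\pi]_r$ between the jellyfish invariant and the Grassmann--Cayley expression \eqref{eq:meetjellyfish} (pulled back via $\Phi^*$), while Proposition~\ref{prop:webs_are_tensors} identifies the Grassmann--Cayley expression with the tensor diagram invariant $[W_{\pi,r}]$ up to sign. So the approach is simply to compose these two identifications.

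Concretely, I would carry out the following steps in order. First, invoke Proposition~\ref{prop:webs_are_tensors} to write $[W_{\pi,r}] = \pm [\pi]'_r$ as elements of $\mathbb{C}[\Gr(n,2n)]$. Second, apply the ring homomorphism $\Phi^*$ (from Section~\ref{sec:jellyfish_on_Gr}) to both sides, obtaining
\[
\Phi^*([W_{\pi,r}]) = \pm \, \Phi^*([\pi]'_r).
\]
Third, apply Proposition~\ref{prop:jellyfish_and_GC} to rewrite the right-hand side as $\pm [\pi]_r$. Rearranging yields $[\pi]_r = \pm \, \Phi^*([W_{\pi,r}])$, which is exactly the statement to be proved.

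There is no real obstacle here, since all of the technical work has already been done in the two propositions: Proposition~\ref{prop:jellyfish_and_GC} handles the delicate comparison between the signs coming from reading words of jellyfish tableaux and the signs produced by iterated $\cap$ operations, while Proposition~\ref{prop:webs_are_tensors} handles the translation between Grassmann--Cayley expressions and tagged/untagged tensor diagrams via perfect orientations and tag migration. The only bookkeeping item worth flagging is the composite global sign; in principle one could combine the explicit sign formula $(-1)^{\inv(\word(\widehat{T})) + \frac{1}{2}\sum_i \nu_i(\nu-|\pi_i|)}$ from the proof of Proposition~\ref{prop:jellyfish_and_GC} with the tag migration signs from the proof of Proposition~\ref{prop:webs_are_tensors} to get an explicit expression, but as the authors have already noted, this does not appear to simplify usefully, and for the purposes of this theorem the ``$\pm$'' formulation suffices. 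Hence the proof is essentially a one-line chaining of the two preceding propositions.
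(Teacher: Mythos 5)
Your proposal is correct and is exactly the paper's own proof: the authors likewise obtain the theorem by combining Propositions~\ref{prop:jellyfish_and_GC} and~\ref{prop:webs_are_tensors}, with the same observation that the composite sign is absorbed into the ``$\pm$''. Your more explicit step-by-step chaining (applying $\Phi^*$ to both sides of the identity from Proposition~\ref{prop:webs_are_tensors}) is just a spelled-out version of the same one-line argument.
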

\begin{proof}
    Combining Propositions~\ref{prop:jellyfish_and_GC} and~\ref{prop:webs_are_tensors}, we obtain the desired result.
\end{proof}

\subsection{Jellyfish invariants lie in flamingo Specht modules}

We now use the commutativity of the $\cap$ operation to establish an important property of $r$-jellyfish invariants. The $r=2$ case is \cite[Lemma~3.14]{Patrias.Pechenik.Striker}.

\begin{theorem}\label{thm:inspecht}
For each ordered set partition $\pi \in \ordsetpart(n,d,r)$, the invariant $[\pi]_r$ lies in the flamingo Specht module $S^{(d^r, 1^{n-rd})}$.
\end{theorem}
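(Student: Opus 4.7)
The plan is to mimic the $r=2$ argument of \cite[Lemma 3.14]{Patrias.Pechenik.Striker}, leveraging the Grassmann--Cayley realization of jellyfish invariants just established in Proposition~\ref{prop:jellyfish_and_GC}.

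First, I would translate the Specht module generators $p_{(\mathbf{j}_1, \ldots, \mathbf{j}_d)}$ of $S^{(d^r, 1^{n - rd})}$ from Section~\ref{sec:Spechtbackground} into the Plücker coordinate ring of $\Gr(n, 2n)$ via $\Phi^*$ and Equation~\eqref{eq:PlutoMinor}. Each generator corresponds, up to a global sign, to a product of Plücker coordinates of specific index structure:
\[
\Delta_{E \cup (\mathbf{j}_1 + n)} \cdot \prod_{i=2}^d \Delta_{S \cup E \cup (\mathbf{j}_i + n)},
\]
where $|\mathbf{j}_1| = \mu_1 = n - (d-1)r$, each $|\mathbf{j}_i| = r$ for $i \geq 2$, and the $\mathbf{j}_i$'s partition $[n]$. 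Notably, exactly one factor omits the block $S$ (corresponding to the big $\mu_1 \times \mu_1$ minor from the first column of $\lambda$), while the other $d-1$ factors all contain $S \cup E$ (corresponding to top-$r$ minors). It therefore suffices to show $[\pi]'_r$, as an element of $\mathbb{C}[\Gr(n, 2n)]$, is a linear combination of such distinguished monomials.

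Second, expanding $[\pi]'_r$ via Equation~\eqref{eq:meetoperation} exactly as in the proof of Proposition~\ref{prop:jellyfish_and_GC}, each term is up to sign a product of the form $\prod_{i=1}^d \Delta_{(S \setminus S_i) \cup E \cup (\pi_i + n)}$, for some partition $S = S_1 \sqcup \cdots \sqcup S_d$ with $|S_i| = \nu_i$. These are generally \emph{not} of Specht-generator form: the tentacle rows $S$ are distributed across multiple factors, rather than being concentrated in $d-1$ of them.

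Third, and this is the key step, I would use the commutativity of $\cap$ (which gives $v_S \cap v_X = \pm v_X \cap v_S$, and allows the ``distinguished'' block $\pi_d$ of the wedge to be shifted to any other position at the cost of a controlled global sign) in tandem with the Plücker relations of $\Gr(n, 2n)$ to rearrange each expansion term. The goal is to consolidate the tentacle rows: push all of $S$ into $d-1$ of the factors (producing $\Delta_{S \cup E \cup (\mathbf{j}_i + n)}$), leaving the remaining factor as $\Delta_{E \cup (\mathbf{j}_1 + n)}$. This rearrangement yields $[\pi]'_r$ as a linear combination of Specht-generator monomials, and applying $\Phi^*$ expresses $[\pi]_r$ as a linear combination of Specht module generators.

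The main obstacle is the third step. In the $r=2$ case of \cite[Lemma 3.14]{Patrias.Pechenik.Striker}, a single application of a Sylvester/exchange-type identity performs the consolidation. For general $r$, the combinatorics of simultaneously redistributing tentacles across $d-1$ cap expansions is considerably more intricate, and tracking signs is delicate. The most likely route is induction on $d$, peeling off a single block at a time via commutativity of $\cap$, so that the induction hypothesis applies to a smaller cap expression; alternatively, one could seek a uniform identity in Grassmann--Cayley that handles all tentacles simultaneously.
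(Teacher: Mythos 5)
Your first two steps match the paper's setup, and you have correctly identified the two relevant ingredients (commutativity of $\cap$ and the shape of Specht generators under $\Phi^*$), but your third step---the entire content of the proof---is left as an acknowledged obstacle, and the plan you sketch for it would not work as stated. You propose to expand \eqref{eq:meetjellyfish} first and then ``rearrange each expansion term'' into Specht-generator form. Term-by-term this is impossible: an individual term $\prod_{i=1}^d \Delta_{(S\setminus S_i)\cup E\cup(\pi_i+n)}$ pulls back under $\Phi^*$ to $\prod_i M^{\pi_i}_{[r]\cup S_i}$, a product of non-top-justified minors, which in general does not lie in $S^{(d^r,1^{n-rd})}$; only the full signed sum does. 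So any post-expansion consolidation must operate on the whole sum at once, and you give no mechanism for that.

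The paper's proof avoids all of this by applying the commutativity of $\cap$ \emph{before} expanding: each factor $v_S \cap v_{E\cup(\pi_i+n)}$ is replaced (up to a harmless global sign) by $v_{E\cup(\pi_i+n)} \cap v_S$. In this flipped orientation, the expansion \eqref{eq:meetoperation} moves $|E|+r$ vectors from $E\cup(\pi_i+n)$ into the determinant with $v_S$, and the final wedge with $v_{E\cup(\pi_d+n)}$ vanishes unless \emph{all} of $E$ is moved. Hence every surviving term has determinant factors $\Delta_{S\cup E\cup\overline{Y_i}}$ with $\overline{Y_i}\subseteq \pi_i+n$ of size $r$, plus one factor $\Delta_{E\cup Y_d}$; under $\Phi^*$ these become the top-justified minors $M^{\overline{Y_i}}_{[r]}$ and $M^{Y_d}_{[\nu]}$, so each term is already, manifestly, a Specht-module generator. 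No Pl\"ucker relations and no induction on $d$ are needed. (The paper does also give a second proof by induction via the $(2^r+1)$-term recurrence of Theorem~\ref{thm:5term}, which is closer in spirit to your fallback suggestion, but that route requires first establishing the recurrence.)
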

\begin{proof}
Recall that $\cap$ is a commutative operation up to sign. Multiplicative factors of $-1$ do not affect our assertion. Therefore, up to global sign, we can compute the function $[\pi]'_r$ on $v_1,\dots,v_n$ by switching the two terms on each $\cap$ in the formula \eqref{eq:meetjellyfish} to obtain: 
\begin{equation}\label{eq:switch}
v_1,\dots,v_{2n} \mapsto \left(\bigwedge_{i=1}^{d-1} v_{E \cup (\pi_i+n)} \cap v_{S}\right) \wedge v_{ E\cup (\pi_d+n)}.
\end{equation}
Performing the computation in this way, we \emph{must} move all of the vectors indexed by elements of $E$ into the determinant with $v_{r+1},\ldots,v_{\nu}$ in order to get a nonzero evaluation when we later compute the wedge product with the final term $v_{E \cup (\pi_d+n)}$. 

For $i \in [d-1]$, let $\overline{Y_i}$  be the other vectors that are moved into the determinant and are indexed by elements in $\pi_i+n$. 
Let \[
Y_i = \{ v_k : k \in \pi_i+n, v_k \notin \overline{Y_i}\}
\]
and let 
\[Y_d = \{v_k : k \in \pi_d + n \} \cup \bigcup_{i=1}^{d-1}  Y_i.
\]

Then the right side of the above formula \eqref{eq:switch} expands as a linear combination 
\begin{equation}\label{eq:dets}
\sum_{\overline{Y_1},\dots,\overline{Y_{d-1}}}\pm \Delta_{E \cup Y_d}\prod_{i=1}^{d-1}\Delta_{  S \cup E\cup \overline{Y_i}},
\end{equation}
where the sum is over all possible sets $\overline{Y_i}$ of the appropriate size.

By \eqref{eq:PlutoMinor}, applying the translation map $\Phi^*$ to \eqref{eq:dets} yields a polynomial in matrix minors
\begin{equation}
\sum_{\overline{Y_1},\dots,\overline{Y_{d-1}}}\pm M^{Y_d}_{[\nu]}\prod_{i=1}^{d-1}M^{\overline{Y_i}}_{[r]}.
\end{equation}
Each monomial in this linear combination is manifestly an element of the flamingo Specht module $S^{(d^r, 1^{n-rd})}$, since it is a product of top-justified minors of the appropriate sizes (cf.\ Section~\ref{sec:Spechtbackground}). 
\end{proof}

We give an alternate proof of Theorem~\ref{thm:inspecht} in Section~\ref{sec:rest} by a recurrence that we establish in Theorem~\ref{thm:5term}.

\section{Diagrammatics of jellyfish invariants}\label{sec:diagrammatics}

In this section, we develop a diagrammatic calculus of jellyfish invariants. We also use this perspective to give an alternate proof of Theorem~\ref{thm:inspecht}. Subsection~\ref{sec:hook} focuses on the special ``hook'' case $r=1$.

\subsection{Inversion counting lemmas}
\label{sec:lemmas}

We now prove some useful lemmas on comparing the number of inversions in related tableaux. These will be used to understand signs in the proof of Theorem~\ref{thm:5term} in the next subsection.

\begin{definition}
Let $\rstabgen_r(T)$ be the set of tableaux obtainable from $T\in\rstab_r(\pi)$ by permuting entries within columns. Let $\rstabgen_r(\pi)=\displaystyle\bigcup_{T\in\rstab_r(\pi)}\rstabgen_r(T)$. We call the elements of $\rstabgen_r(\pi)$ \newword{column-permuted jellyfish tableaux}.
\end{definition}

\begin{definition}
\label{def:inv2}
Let $T\in \rstabgen_r(\pi)$. Define the \newword{inversion number} $\inv(T)$ as the number of inversions in the row reading word (left to right, top to bottom), except we do not count inversions within columns. Define $\sgn(T)=(-1)^{\inv(T)}$.
\end{definition}

Note that $\rstab_r(\pi) \subsetneq \rstabgen_r(\pi)$ and Definition~\ref{def:inv} agrees with the restriction of Definition~\ref{def:inv2}, since there are no inversions within columns of a tableau in $\rstab_r(\pi)$.

\begin{lemma}
\label{lem:row_col_swap}
Suppose $U\in \rstab_r(\pi)$ and $T\in \rstabgen_r(U)$. Then $\sgn(U)=\sgn(T)$.
\end{lemma}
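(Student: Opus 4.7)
The plan is to reduce to a single intra-column transposition and then argue by parity, analyzing how each other entry contributes to the change in $\inv$. Because any column permutation taking $U$ to $T$ can be written as a product of transpositions of two entries within a single column, it suffices to show that each such elementary transposition preserves the sign from Definition~\ref{def:inv2}.

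Fix some $T' \in \rstabgen_r(U)$, and suppose we swap entries $a$ at $P = (i_1, j)$ and $b$ at $Q = (i_2, j)$ (with $i_1 < i_2$) to obtain $T''$. The pair $(a,b)$ itself lies inside column $j$ and is excluded from $\inv$, so we may focus on pairs involving a third entry $c$ at some position $R = (i_3, j_3) \neq P, Q$. If $j_3 = j$, both $(a,c)$ and $(b,c)$ are intra-column and hence not counted in Definition~\ref{def:inv2}, contributing nothing. If $j_3 \neq j$, I would split into three cases based on where $R$ falls in the row reading order relative to $P$ and $Q$:
\begin{enumerate}[label=(\roman*)]
    \item $R$ is read before both $P$ and $Q$;
    \item $R$ is read strictly between $P$ and $Q$;
    \item $R$ is read after both $P$ and $Q$.
\end{enumerate}
In cases (i) and (iii), the reading positions of $c$ relative to each of $a$ and $b$ are the same in $T'$ and $T''$, so neither pair changes inversion status. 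In case (ii), the reading positions of $c$ against both $a$ and $b$ simultaneously reverse; since $a$, $b$, $c$ are distinct, each of the two pairs flips inversion status, and the two flips cancel modulo $2$.

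Summing these contributions shows that a single elementary intra-column transposition preserves $\inv$ modulo $2$, and hence preserves $\sgn$. Iterating across a sequence of transpositions that realizes the column permutation taking $U$ to $T$, we conclude $\sgn(U)=\sgn(T)$.

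The only mildly delicate point is verifying case (ii) at its boundary, namely when $i_3 = i_1$ with $j_3 > j$, or $i_3 = i_2$ with $j_3 < j$; in both situations one has to use the lexicographic convention for the row reading order to confirm that $R$ still lies strictly between $P$ and $Q$, and that after the swap it lies strictly between $Q$ and $P$ in the reversed sense. After this check, the parity cancellation argument goes through uniformly, and the lemma follows.
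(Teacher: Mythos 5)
Your proof is correct and follows essentially the same route as the paper's: reduce to a single transposition of two entries within one column, discard intra-column pairs (which Definition~\ref{def:inv2} ignores), and show that for every third entry outside that column the inversion statuses of the two affected pairs either both stay fixed or both flip, so the parity of $\inv$ is preserved. Your explicit three-case analysis of where the third entry sits in the reading order is just a more carefully spelled-out version of the paper's cancellation argument.
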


\begin{proof} 
This argument is the same as for the analogous statement in \cite[Lemma~3.9]{Patrias.Pechenik.Striker}, and we will reproduce it here.
Suppose $T\in \rstabgen(U)$ and suppose $T'$ is obtained from $T$ by permuting the labels within columns.
We show $\sgn(T)=\sgn(T')$ (and hence, by repeated application, $\sgn(T) = \sgn(U)$). Note $\inv(T)$ may not equal $\inv(T')$, but they have the same parity.

It is enough to prove the result in the case that $T'$ is obtained from $T$ by interchanging two entries $i$ and $j$ in a single column.
Recall that $\inv(T)$ is the number of inversions in the row reading word of $T$, except ignoring inversions within columns. For each pair of entries $k,\ell$ of $T$, define
\[
\iota(k,\ell) = \begin{cases}
-1, & \text{if $k$ and $\ell$ are inverted in the reading word of $T$;} \\
1, & \text{otherwise.}\\
\end{cases}
\]

Consider any entry $k$ outside of the column containing $i$ and $j$.
Interchanging the elements $i$ and $j$ in $T$ swaps their
positions in the reading word. Therefore, the sum $\iota(i,k) + \iota(j,k) = 0$. Hence, this swap does not affect the parity of the number of inversions involving $(i, k)$ and $(j, k)$.
It does change whether $(i, j)$ is an inversion in the reading word; however, since they are in the same column, this does not affect the inversion number of the jellyfish tableau. Hence, $\sgn(T) = \sgn(T')$.
\end{proof}

\begin{corollary}
    Suppose that $\pi, \pi' \in \ordsetpart(n,d,r)$ are ordered set partitions with the same underlying unordered set partition, so that there exists $\sigma\in\mathfrak{S}_d$ with $\sigma(\pi)=\pi'$. Then, 
    \[
    [\pi]_r = \sgn(\sigma)^r [\pi']_r.
    \]
\end{corollary}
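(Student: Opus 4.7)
The plan is to deduce this corollary directly from Lemma~\ref{lem:colswap} together with the observation that the product of determinants $\rspoly(T)$ is invariant under the column-permutation bijection. Explicitly, Lemma~\ref{lem:colswap} provides a bijection $\rstab_r(\pi) \to \rstab_r(\sigma(\pi)) = \rstab_r(\pi')$ sending $T$ to the tableau $T'$ obtained by permuting the columns of $T$ according to $\sigma$, and it records the sign change $\sgn(T) = \sgn(\sigma)^r \sgn(T')$.

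The first step is to verify that $\rspoly(T) = \rspoly(T')$. This is essentially a matter of bookkeeping: the column of $T'$ indexed by $\sigma(i)$ is by construction the column of $T$ indexed by $i$, and the associated block is $\pi'_{\sigma(i)} = \pi_i$. Hence the factor $M^{\pi'_{\sigma(i)}}_{R_{\sigma(i)}(T')}$ in the product $\rspoly(T') = \prod_j M^{\pi'_j}_{R_j(T')}$ is literally equal to the factor $M^{\pi_i}_{R_i(T)}$ appearing in $\rspoly(T)$. Reindexing the product over $i$ rather than $j = \sigma(i)$ shows $\rspoly(T') = \rspoly(T)$.

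The second and final step is to assemble these ingredients. Using the bijection, one rewrites
\[
[\pi]_r = \sum_{T \in \rstab_r(\pi)} \sgn(T)\, \rspoly(T) = \sum_{T' \in \rstab_r(\pi')} \sgn(\sigma)^r\, \sgn(T')\, \rspoly(T') = \sgn(\sigma)^r\, [\pi']_r,
\]
which is the claimed identity. There is no real obstacle here; both Lemma~\ref{lem:colswap} and the invariance of $\rspoly$ under relabeling do all the work, and the corollary is essentially a summation-level statement. The only subtlety worth mentioning is the use of $\sigma(\pi)_i = \pi_{\sigma^{-1}(i)}$ from Lemma~\ref{lem:colswap}, which must be tracked carefully when matching blocks of $\pi'$ with columns of $T'$, but this is purely notational.
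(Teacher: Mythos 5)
Your proof is correct and follows essentially the same route as the paper, which derives the corollary from the column-permutation bijection and sign relation of Lemma~\ref{lem:colswap}. The only difference is cosmetic: the paper's one-line proof also cites Lemma~\ref{lem:row_col_swap}, whereas your explicit bookkeeping (permuting whole columns preserves row positions, hence preserves $\rspoly$, and keeps each column's entries in increasing order) shows that the extra lemma is not actually needed.
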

\begin{proof}
    This is immediate from combining Lemmas~\ref{lem:colswap} and~\ref{lem:row_col_swap}.
\end{proof}

\subsection{A \texorpdfstring{$(2^r+1)$}{2\^r+1}-term recurrence}
\label{sec:5term}
In this subsection, we state and prove a useful recurrence, Theorem~\ref{thm:5term}, which we apply in the next subsection to give an alternate proof of Theorem~\ref{thm:inspecht}, that the $r$-jellyfish invariants $[\pi]_r$ are contained in the appropriate Specht modules. The $r=2$ case of this recurrence appeared as \cite[Theorem~3.11]{Patrias.Pechenik.Striker} with a direct proof by Laplace expansion of determinants. Here, we identify the recurrence as an avatar of the classical Pl\"ucker relation. The description of signs appearing in Theorem~\ref{thm:5term} is very different from the classical Pl\"ucker sign calculation, so this perspective may be generally useful as a new way to think about the signs appearing in Pl\"ucker relations.

\begin{theorem}
\label{thm:5term}
Partition $\{1,\ldots n\}$ into three nonempty sets: $A$, $B$, $C$, where $|C|=r$. Then 
\begin{equation}
\label{eq:5term}
\big[(A\cup B \mid C)\big]_r=\sum_{S\subseteq C}(-1)^{|S|}\big[( A\cup S \mid B\cup (C\setminus S) ) \big]_r
\end{equation}
\end{theorem}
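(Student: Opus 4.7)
The plan is to translate the recurrence into an identity among products of Plücker coordinates on $\Gr(n,2n)$, via the Grassmann--Cayley interpretation of Section~\ref{sec:GrassmannCayley}, and then recognize it as a consequence of classical Plücker relations. I would first invoke Proposition~\ref{prop:jellyfish_and_GC} to replace every $[\pi]_r$ appearing in \eqref{eq:5term} by its Grassmann--Cayley counterpart $[\pi]'_r$ from Definition~\ref{def:web_inv}. A useful preliminary observation is that all the ordered set partitions in \eqref{eq:5term} have $d=2$ blocks of total size $n$, so the parameter $\nu = |A|+|B|$ and the index sets $[r+1,\nu]$ (denoted $\mathcal{S}$ below, to avoid clash with the subset $S \subseteq C$) and $E = [\nu+1,n]$ coincide across all terms. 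For the LHS, the hypothesis $|C|=r$ forces $\nu_2 = 0$, making $v_{E \cup (A+n) \cup (B+n)}$ top-dimensional in $\bigwedge^n V$, and the meet of $v_\mathcal{S}$ with a top-dimensional factor collapses to scalar multiplication, giving
\[
[\pi]'_r \;=\; \Delta_{E\cup(A+n)\cup(B+n)}\cdot \Delta_{\mathcal{S}\cup E\cup(C+n)}.
\]

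Next, I would expand each RHS term $[\pi_S]'_r$ using formula \eqref{eq:meetoperation}, producing a signed sum over $(|B|-|S|)$-element subsets $X\subseteq\mathcal{S}$. After wedging with the remaining factor, each summand becomes a product of two top-dimensional Plücker coordinates, and the double sum over $S\subseteq C$ and $X\subseteq\mathcal{S}$ re-indexes via $(S,X)\mapsto X\sqcup(S+n)$ to a single sum over the $|B|$-element subsets of $\mathcal{S}\cup(C+n)$. Writing $P = E\cup(A+n)\cup(B+n)$ and $Q = \mathcal{S}\cup E\cup(C+n)$, the claim thus reduces to an identity of the shape
\[
\Delta_P \cdot \Delta_Q \;=\; \sum_{\substack{R\subseteq \mathcal{S}\cup(C+n)\\ |R|=|B|}} \pm\,\Delta_{(P\setminus(B+n))\cup R}\cdot\Delta_{(Q\setminus R)\cup(B+n)},
\]
which expresses $\Delta_P\cdot\Delta_Q$ as a signed sum over ways to exchange the subset $B+n \subseteq P$ with matched subsets of $Q\setminus E$. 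Such exchange identities are standard consequences of the Plücker relations on $\Gr(n,2n)$; as a sanity check, in the toy case $n=3$, $r=1$, $|A|=|B|=1$, one verifies directly that it collapses to a single $3$-term Plücker relation.

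The main technical obstacle is the sign bookkeeping. Four distinct sources of sign must combine correctly: the $(-1)^{|S|}$ factor in the statement, the signs arising in the $\cap$-expansion from \eqref{eq:meetoperation}, the reordering signs needed to write the final wedges as standard Plücker coordinates, and the global signs supplied by Proposition~\ref{prop:jellyfish_and_GC}. Each of these signs depends only on the relative positions of the fixed index sets $\mathcal{S}$, $E$, $A+n$, $B+n$, and $C+n$, so the verification reduces to a finite parity computation. If direct sign matching proves delicate, I would anchor the calculation by first comparing the $S = \emptyset$ terms on both sides — on which both $[\pi]_r$ and $[\pi_\emptyset]_r$ reduce to a single explicit product of minors — and then argue that the remaining signs are forced by the antisymmetry of $\Delta$ in the indices being exchanged.
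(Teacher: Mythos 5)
Your overall route is the paper's route: the left-hand side collapses to a product of two Pl\"ucker coordinates (your $\Delta_P\cdot\Delta_Q$ is the paper's $\Delta_{[n-r+1,n]A'B'}\cdot\Delta_{[r+1,n]C'}$ up to relabeling), the right-hand side re-indexes to a sum over exchanges of the fixed $|B|$-element set $B+n$ with $|B|$-element subsets of the other coordinate, and the identity is then the classical Pl\"ucker exchange relation (the paper cites Fulton's Eqn.~9.1; terms with $R\cap E\neq\emptyset$ vanish by repeated indices, exactly as you note implicitly). Your reduction of the LHS, the expansion of each RHS term over $(|B|-|S|)$-element subsets $X\subseteq\mathcal{S}$, and the bijection $(S,X)\mapsto X\sqcup(S+n)$ are all correct and mirror the paper's matching of Pl\"ucker terms with column-permuted jellyfish tableaux.

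The genuine gap is the sign verification, which is not a side issue here but the bulk of the proof, and your proposal both defers it and mischaracterizes it. The global sign in Proposition~\ref{prop:jellyfish_and_GC} is $(-1)^{\inv(\word(\widehat{T}))+\frac12\sum_i\nu_i(\nu-|\pi_i|)}$, and $\inv(\word(\widehat{T}))$ depends on how the elements of $A$, $B$, $C$, and $S$ interleave inside $[n]$; it changes from one RHS term $\pi_S$ to the next and is emphatically not determined by ``the relative positions of the fixed index sets $\mathcal{S}$, $E$, $A+n$, $B+n$, $C+n$,'' so the claimed ``finite parity computation'' does not exist in the form you describe. The paper's proof spends most of its length on precisely this point: it splits $B=B_1\sqcup B_2$ and $A=A_1\sqcup A_2$, fixes particular column orderings and shuffles $A_2\shuffle B_2$ so that the tableau sign and the Pl\"ucker reordering sign can be compared pair by pair, and only then extracts the residual factor $(-1)^{|S|}$ from the $|S|$ exchanged tentacle rows. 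Your fallback anchor also rests on a false premise: the $S=\emptyset$ term $[(A\mid B\cup(C))]_r$ is \emph{not} a single product of minors (it has $\binom{|A|+|B|-r}{|B|}$ jellyfish tableaux), so there is no single-term comparison to anchor to, and ``antisymmetry of $\Delta$'' alone cannot relate the reading-word sign $\sgn(T)$ of a jellyfish tableau to the sorting sign of a Pl\"ucker monomial without an explicit interleaving analysis of the kind the paper carries out. To complete your argument you would need to reproduce that analysis (or find a genuinely coordinate-free substitute for it), at which point your proof and the paper's coincide.
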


To illustrate Theorem~\ref{thm:5term}, let $r=3$ and suppose $C = \{c_1, c_2,c_3\}$. Figure~\ref{fig:recurrence} gives a diagrammatic representation of the $9$-term recurrence:
{\small
\begin{align*}
\big[&(A\cup B \mid C)\big]_3
=\\ 
&\big[(A \mid B \cup C)\big]_3
-\big[(A\cup \{c_1\}  \mid B \cup \{c_2,c_3\})\big]_3
-\big[(A\cup \{c_2\}  \mid B \cup \{c_1,c_3\})\big]_3
-\big[(A\cup \{c_3\}  \mid B \cup \{c_1,c_2\})\big]_3 \\
+&\big[(A\cup \{c_1,c_2\} \mid B  \cup \{c_3\})\big]_3
+\big[(A\cup \{c_1, c_3\}  \mid B \cup \{c_2\})\big]_3
+\big[(A\cup \{c_2,c_3\} \mid B \cup \{c_1\})\big]_3
-\big[(A\cup C \mid B)\big]_3
\end{align*}
}

\begin{figure}[htbp]
\begin{center}
\includegraphics[width=\textwidth]{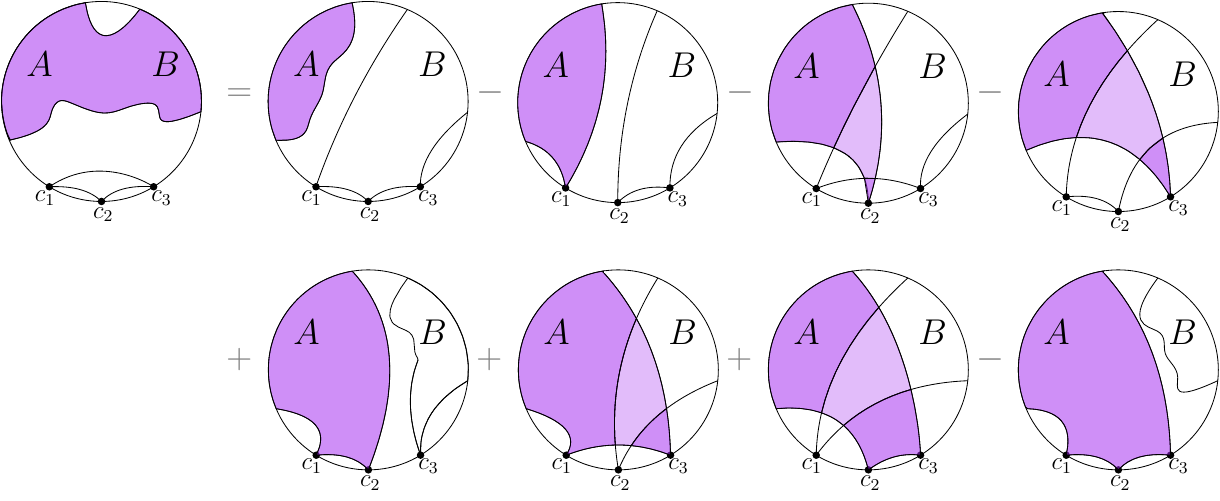}
\caption{An illustration of the recurrence of Theorem~\ref{thm:5term} in the case $r=3$.}
\label{fig:recurrence}
\end{center}
\end{figure}

\begin{proof}[Proof of Theorem~\ref{thm:5term}]
        We prove Theorem~\ref{thm:5term} using Pl\"ucker coordinates and relations. As a shorthand, for $W$ any set of positive integers, we write $W' = \{w + n : w \in W\}$.
        
    First, we rewrite the left side of \eqref{eq:5term} in terms of Pl\"ucker variables, using Equation \eqref{eq:PlutoMinor}.  By definition, there is only one jellyfish tableau for the ordered set partition $(A\cup B | C)$, since $|C|=r$. Call this tableau $\widehat{T}$.
    We have
    \begin{equation}\label{eq:expand_the_jellyfish_inv}
    \begin{split}
            \big[(A\cup B \mid C)\big]_r &= \sum_{T\in \rstab_r(A\cup B \mid C)}\sgn(T) \; \rspoly(T)  \\
          &=  \sgn(\widehat{T}) \;  M_{R_{A \cup B}(\widehat{T})}^{A \cup B} \cdot M_{R_C(\widehat{T})}^{C} \\
          &=  \sgn(\widehat{T}) \; (-1)^{|A \cup B|} \Delta_{([n]\setminus R_{A \cup B}(\widehat{T}))  A' B'}(\Phi(M)) \cdot (-1)^{|C|}\Delta_{([n]\setminus R_C(\widehat{T}))  C'}(\Phi(M))\\
          &= (-1)^n \sgn(\widehat{T}) \;  \Delta_{[n-r + 1, n]  A' B'}(\Phi(M)) \cdot \Delta_{[r+1, n]  C'}(\Phi(M))
    \end{split}
      \end{equation}
Here, the first and second equalities are by definition, while the third is using \eqref{eq:PlutoMinor} and the fourth line collects the sign.

We will apply the classical Pl\"ucker relation in a form that we borrow from \cite[Eqn.~9.1]{Fulton:YoungTableaux}:
\begin{equation}\label{eq:Fulton}
\Delta_{i_1,i_2,\ldots,i_d}\cdot\Delta_{j_1,j_2,\ldots,j_d}=\sum\Delta_{i_1',i_2',\ldots,i_d'}\cdot\Delta_{j_1',j_2',\ldots,j_d'},
\end{equation}
where the sum is over all pairs obtained by interchanging a fixed set of $k$ of the subscripts $j_1,j_2,\ldots,j_d$ with $k$ of the subscripts in $i_1,i_2,\ldots,i_d$, maintaining the order in each. Note that whenever the resulting subscripts are rearranged, signs are introduced. In our setting, we will have $k = |B|$. 

Before applying the Pl\"ucker relation, we rearrange the last line of \eqref{eq:expand_the_jellyfish_inv} in order to better match \eqref{eq:Fulton}, obtaining:
\begin{equation}\label{eq:expand_the_jellyfish_inv_some_more}
\begin{split}
        \big[(A\cup B \mid C)\big]_r 
        &= (-1)^n \sgn(\widehat{T}) \;   \Delta_{[r+1, n]  C'}(\Phi(M)) \cdot \Delta_{[n-r + 1, n]  A'B'}(\Phi(M)) \\
        &= (-1)^n \sgn(\widehat{T}) \;   \Delta_{[n-r+1, n]  C'  [r+1,n-r]}(\Phi(M)) \cdot \Delta_{[n-r + 1, n]  A' B'}(\Phi(M)). 
        \end{split}
        \end{equation}
In the second equality above, we also moved the interval $[r+1,n-r]$ to the end of its Pl\"ucker coordinate to simplify things later. This move does not affect the global sign because each element in $[r+1,n-r]$ moves past the $r$ elements of $[n-r+1,n]$ and the $r$ elements of $C'$, for an even total number of swaps.

The particular Pl\"ucker relation we apply is the one in which $B'$ is the fixed set of $k$ subscripts that gets interchanged. What happens is that the first $\ell$ indices in $B'$ interchange with a subset $Z$ of $[r+1,n]$ and the rest interchange with a subset $S'$ of $C'$. We can match each term from the right side of \eqref{eq:Fulton} with the column-permuted jellyfish tableau that is obtained from $\widehat{T}$ by moving the elements of $B$ from column $1$ to column $2$, moving the elements of $S$ from column $2$ to column $1$, and so that the elements of $Z$ are the row indices of the empty boxes in column $1$. By the definition of jellyfish tableaux, this also determines the row indices of the empty boxes in column $2$. See Figure~\ref{fig:jellyfish_sign_calc} for an example of this construction (the unexplained notations in that figure will be introduced later for a sign calculation). By Lemma~\ref{lem:row_col_swap}, the ordering of the labels within each column is immaterial; however, the ordering we impose here will be useful later. Thus, each choice of subsets $S$ and $Z$, corresponds to a unique jellyfish tableau contributing to the right hand side of \eqref{eq:5term} by sorting the columns in increasing order. Conversely, one can check that each of the jellyfish tableaux corresponds to a term of the Pl\"ucker relation.

\begin{figure}[htb]
    \centering
    \begin{tikzpicture}
    \node (T1) {\ytableaushort{{A_1}Y,{B_1}S,a,a,b,a,b,b,a,b}};
    \node[right= 18em of T1] (T2) {\ytableaushort{{A_1}Y,S{B_1},a,a,\none b,a,\none b, \none b,a, \none b}};
    \node[right=4.5em of T1] (fake1) {};
    \node[right=7em of fake1] (fake2) {};
    \node[below = 3em of T1] (P1) {$\Delta_{[n-r+1,n] Y'S'[r+1,n-r]} \Delta_{[n-r+1,n]A_1'B_1'(A_2' \shuffle B_2')} $};
    \node[below = 3em of T2] (P2) {$\Delta_{[n-r+1,n]Y'B'_1(Z^C \shuffle B'_2)} \Delta_{[n-r+1,n]A'_1S'(A'_2 \shuffle Z)}$};
    \draw[thick, ->] (fake1) -- (fake2);
    \draw[thick, ->] (P1) -- (P2);
    \end{tikzpicture}
    \caption{An illustration of the proof of Theorem~\ref{thm:5term}.}
    \label{fig:jellyfish_sign_calc}
\end{figure}

Theorem~\ref{thm:5term} is then proved, except for comparing the signs from the jellyfish tableaux with the signs arising from sorting indices in \eqref{eq:Fulton}. 

As a warm up, we first consider the case $|A \cup B| = r$. Note that in this case, $n=2r$, so $[r+1,n-r]=\emptyset$.  We can assume our  Pl\"ucker monomial from \eqref{eq:expand_the_jellyfish_inv_some_more} looks like $\Delta_{[n-r+1,n]  Y'S'}\Delta_{[n-r+1,n]  A'B'}$, where $Y = C \setminus S$. Since the elements of the subscripts on these Pl\"ucker variables are out of order, there is an associated sign: $-1$ to the number of inversions. We want to see how this sign changes when we rearrange indices to obtain $\Delta_{[n-r+1,n]  Y'B'} \Delta_{[n-r+1,n]  A'S'}$.  Note that for each choice of $S$, we are choosing a different ordering in the initial Pl\"ucker monomial from \eqref{eq:expand_the_jellyfish_inv_some_more}, but that this is acceptable, since we only need to understand the relation between the signs of the two monomials of interest. In comparing the signs of these monomials, it is straightforward to see that the signs differ by $-1$ to the number of pairs $(t_1, t_2) \in (B \cup S) \times (Y \cup A)$ with $t_1 < t_2$.

We now perform the corresponding sign calculation for the corresponding jellyfish tableaux.
By Lemma~\ref{lem:row_col_swap}, we can instead compare the column-permuted jellyfish tableaux 
\ytableaushort{A Y, B S} and \ytableaushort{A Y, S B} , where the elements of each set are arranged vertically in a column in some fixed order. Since inversions in a column do not contribute to the sign of a jellyfish tableau, here there is a sign change for each pair $(t_1, t_2) \in (B \cup S) \times (Y \cup A)$ with $t_1 < t_2$, matching the sign changes from shuffling indices in the Pl\"ucker monomials, as described in the previous paragraph; in addition, there is a sign change for each of the bottom $|B| = |S|$ rows, as desired to match the sign appearing on the right-hand side of \eqref{eq:5term}. This completes the proof in the special case $|A \cup B| = r$.

Now we consider the general case of this sign calculation. 
Let $B_1$ be the first $|S|$ elements of $B$ and let $B_2 = B \setminus B_1$. Similarly, let $A_1$ be the first $r - |S|$ elements of $A$ and let $A_2 = A \setminus A_1$.

We study the column-permuted version of $\widehat{T}$ for which the left column has $A_1$ (in increasing order) above $B_1$ (in increasing order) occupying the body of the jellyfish, and then $A_2$ and $B_2$ in the following rows, shuffled together so that $B_2$ occupies the rows indexed by $Z$. Write $A_2 \shuffle B_2$ for this ordering of $A_2 \cup B_2$. We also write $Z^C$ for $[r+1,n-r] \setminus Z$. The right column of our column-permuted $\widehat{T}$ has $Y = C \setminus S$ (in increasing order) above $S$ (in increasing order). See the left tableau of Figure~\ref{fig:jellyfish_sign_calc} for an illustration.

We can assume our corresponding Pl\"ucker monomial from \eqref{eq:expand_the_jellyfish_inv_some_more} is written in a similar way, as \[
\Delta_{[n-r+1,n]  Y'S' [r+1,n-r]} \Delta_{[n-r+1,n]  A_1'B_1' (A_2' \shuffle B_2')},
\]
where $Y = C \setminus S$. Then we want to compare the sign of this with the sign for
\[
\Delta_{[n-r+1,n]Y'B'_1(Z^C \shuffle B'_2))} \Delta_{[n-r+1,n]A'_1S'(A'_2 \shuffle Z))}.
\]
Again, note that for each choice of $S$, we are choosing a different ordering in the corresponding Pl\"ucker monomial. 
In comparing the signs of these monomials, it is again straightforward to see that swapping $B_1'$ with $S'$ results in a sign difference of $-1$ to the number of pairs $(t_1, t_2) \in (B_1 \cup S) \times (Y \cup A_1)$ with $t_1 < t_2$ together with $-1$ to the number of pairs $(t_1, t_2) \in (B_1 \cup S) \times A_2$ with $t_1 > t_2$. 
Furthermore, the sign difference in swapping $B_2'$ and $Z$ comes from the fact that all the elements of $B_2'$ are larger than the elements of $Z^C$ and all the elements of $Z$ are smaller than the elements of $A_2'$. So after the swap, all the elements of $B_2'$ have inversions with  the elements of $[r+1,n-r]$ to their right, while all the elements of $Z$ have inversions with the elements of $A_2'$ to their left. Since the positions occupied by $B_2'$ and $Z$ are corresponding, this results in a total of $n-2r$ inversions. Since $2r$ is even, this results in a contribution of $(-1)^n$.

We now compute the corresponding sign difference between the corresponding column-permuted jellyfish tableaux. See Figure~\ref{fig:jellyfish_sign_calc} for a visual comparison of these tableaux. It is again straightforward to see that swapping $B_1$ with $S$ results in a sign difference of $-1$ to the number of pairs $(t_1, t_2) \in (B_1 \cup S) \times (Y \cup A_1)$ with $t_1 < t_2$ together with $-1$ to the number of pairs $(t_1, t_2) \in (B_1 \cup S) \times A_2$ with $t_1 > t_2$. Furthermore, there is a sign change for each of the bottom $|B_1| = |S|$ rows.

So, continuing the calculation from \eqref{eq:expand_the_jellyfish_inv_some_more}, we have that 
\begin{align*}
   [(A\cup B \ | \ C)]_r =& (-1)^n\sgn(\widehat{T})\sum_{S\subseteq C}\sgn(\widehat{T})(-1)^n(-1)^{|S|}\big[( A\cup S \mid B\cup (C\setminus S) ) \big]_r \\
    =& \sum_{S\subseteq C}(-1)^{|S|}\big[( A\cup S \mid B\cup (C\setminus S) ) \big]_r
\end{align*}
as desired.
\end{proof}

We now note that we can use this rule in set partitions with more than two blocks by fixing all blocks except those involving $A$, $B$, and $C$.
\begin{corollary}\label{cor:twoblocks}
Partition $\{1,\ldots n\}$ into $d+1$ nonempty sets: $\pi_1,\pi_2,\ldots,\pi_{d-2}, A, B$, and $C$, where $|C|=r$. Then 
\begin{equation}
\label{eq:manyterm}
\big[(\pi_1 \mid \ldots \mid \pi_{d-2} \mid A\cup B \mid C)\big]_r=\sum_{S\subseteq C}(-1)^{|S|}\big[(\pi_1 \mid \ldots \mid \pi_{d-2} \mid A\cup S \mid B\cup (C\setminus S))\big]_r.
\end{equation}
\end{corollary}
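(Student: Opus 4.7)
The plan is to derive Corollary~\ref{cor:twoblocks} by adapting the Pl\"ucker-relation argument from the proof of Theorem~\ref{thm:5term} to the setting where additional blocks are present as passive onlookers. First, I would note that every ordered set partition appearing in~\eqref{eq:manyterm} has the same number $d$ of blocks, the same underlying set $[n]$, and the same parameter $r$. Consequently, the sets $S$ and $E$ from Section~\ref{sec:GrassmannCayley} are identical across all the terms in~\eqref{eq:manyterm}, and the same is true of the ``body rows'' $[r]$ that are forced to be filled in every jellyfish tableau.

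Second, I would expand both sides of~\eqref{eq:manyterm} via Definition~\ref{def:polynomial} and Equation~\eqref{eq:PlutoMinor}, turning them into signed sums of degree-$d$ Pl\"ucker monomials on $\Gr(n,2n)$. I would then group the terms by the row-assignments for the fixed blocks $\pi_1,\ldots,\pi_{d-2}$; each such grouping pulls out a common factor from both sides, leaving a 2-block identity involving only the Pl\"ucker coordinates corresponding to the last two blocks. For each fixed configuration of these row-assignments, I would apply the Pl\"ucker relation~\eqref{eq:Fulton} in exactly the form used in the proof of Theorem~\ref{thm:5term}: interchange elements between the two remaining Pl\"ucker coordinates and match the resulting terms with jellyfish tableaux on the right-hand side. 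Since the Pl\"ucker relation is local---it only touches the two coordinates being related---the presence of the fixed factors does not interfere with this application.

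The main obstacle will be confirming that the sign analysis from the proof of Theorem~\ref{thm:5term} still goes through. Inversions involving only entries in the fixed columns contribute equally on both sides and cancel. Inversions involving only entries in the two varying columns are precisely the ones accounted for in the original proof. The potentially problematic contributions are ``mixed'' inversions between a fixed column and a varying column. However, the entries in the two varying columns collectively partition the set $A\cup B\cup C$, and body-row inversions of the form (fixed column entry, varying column entry) depend only on which elements of $A\cup B\cup C$ occupy the body of each varying column. A case-by-case inventory of these mixed inversions---analogous to the treatment of ``global'' sign contributions in the proof of Theorem~\ref{thm:5term}, where many sign terms were absorbed into a quantity depending only on $\pi$---should show that the net sign difference between the LHS and a given term of the RHS is exactly $(-1)^{|S|}$, matching the coefficient in~\eqref{eq:manyterm} and completing the proof.
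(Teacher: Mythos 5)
Your proposal is correct and is essentially the paper's own argument: the paper's proof of Corollary~\ref{cor:twoblocks} is the one-line observation that the Pl\"ucker coordinates for $\pi_1,\ldots,\pi_{d-2}$ factor out, so the two-block Pl\"ucker-relation argument of Theorem~\ref{thm:5term} applies verbatim. Your version is in fact more explicit than the paper's on the two points it glosses over --- grouping terms by the row-assignments of the fixed blocks before factoring, and checking that the ``mixed'' inversions between fixed and varying columns contribute only a sign that is constant within each group (since the set of tentacle rows available to the last two blocks is determined once the fixed blocks' rows are chosen) --- and both checks go through as you anticipate.
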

\begin{proof}
The arguments in the proof of Theorem~\ref{thm:5term} apply, since the Pl\"ucker coordinates corresponding to $\pi_1,\pi_2,\ldots,\pi_{d-2}$ factor out.
\end{proof}

\subsection{\texorpdfstring{$r$}{r}-Jellyfish invariants for flamingo Specht modules}
\label{sec:rest}
In this subsection, we give an alternate proof of Theorem~\ref{thm:inspecht} using the recurrence of Theorem~\ref{thm:5term}. We also describe the action of the symmetric group $\mathfrak{S}_n$. Finally, we show linear independence of $r$-jellyfish invariants for $r>1$.

\begin{proof}[Alternate proof of Theorem~\ref{thm:inspecht}]
Recall from Section~\ref{sec:Spechtbackground} that $S^{(d^r,1^{n- rd})}$ is generated by fillings of partition shape $(d^r,1^{n-r d})$ where each of $\{1,\ldots,n\}$ is used exactly once, and the invariant is given by multiplying the Pl\"ucker variables corresponding to the columns of the filling. Note that if $ \pi \in \ordsetpart(n,d,r)$ has $d-1$ blocks of size $r$, then $[\pi]_r$ is a single term where the corresponding jellyfish tableau is a filling of partition shape $(d^r,1^{n-r d})$ with each of $\{1,\ldots,n\}$ used exactly once. Thus clearly $[\pi]_r\in S^{(d^r,1^{n-r d})}$ by the explicit description of Specht modules from Section~\ref{sec:Spechtbackground}. We use this fact repeatedly in our argument below.

We first consider the case where $d=2$. That is, we show that the invariant $[\pi]_r$ of any ordered set partition $\pi$ of $n$ with 2 blocks and parts of size at least $r$ is in $S^{(2^r,1^{n-2r})}$. We show this by induction on the number of elements in the smaller block.

The first non-trivial case is when the smaller block has $r+1$ elements. Suppose that $\pi=(A \mid T)$ with $T=\{t_1,t_2,\ldots,t_{r+1}\}$ and $|A|\geq r+1$. We apply Equation~\eqref{eq:5term} with $A$ as given, $B=\{t_1\}$, and $C=T\setminus \{t_1\}$. Then Equation~\eqref{eq:5term} yields
\begin{align*}
\big[(A\cup \{t_1\} \mid \{t_2,\ldots,t_{r+1}\})\big]_r&=\sum_{S\subseteq T\setminus \{t_1\}}(-1)^{|S|}\big[(A\cup S \mid \{t_1\}\cup (T\setminus \{t_1\})\setminus S)\big]_r\\
&=\big[(A \mid T)\big]_r-\sum_{i\in\{2,\ldots,r+1\}}\big[(A\cup \{t_i\} \mid T\setminus \{t_i\})\big]_r\\
&+\sum_{\substack{S\subseteq T\setminus \{t_1\} \\ |S|>1}}(-1)^{|S|}\big[(A\cup S \mid T\setminus S)\big]_r.
\end{align*}
Then the term on the left side of the equality and all terms in $\sum_{i\in\{2,\ldots,r+1\}}\big[(A\cup \{t_i\} \mid T\setminus \{t_i\})\big]_r$ are invariants for ordered set partitions comprised of one block of size $r$ and one block of size $n-r$, so these are already in $S^{(2^r, n-2r)}$. The terms of the sum 
\[\sum_{\substack{S\subseteq T\setminus \{t_1\} \\ |S|>1}}(-1)^{|S|}\big[(A\cup S \mid T\setminus S)\big]_r
\]
are all $0$ since $|T\setminus S|<r$ for $|S|>1$. Thus, we have an expression for $[\pi]_r=[(A \mid T)]_r$ in terms of elements of the Specht module. 

Fix some positive integer $m< \lfloor\frac{n}{2}\rfloor$. Suppose now that the invariant $[\pi]_r$ of any two-block ordered set partition with the smaller block of size $k$ and the larger block of size $n-k$ is in $S^{(2^r,1^{n-2r})}$ for $k\leq m$, and we will show the result holds for two-block ordered set partitions with the smaller block of size $m+1$ and the larger block of size $n-m-1$. 
Let $\pi=(A \mid T)$ be such an ordered set partition, with $T=\{t_1,t_2,\ldots,t_{m+1}\}$. 
Apply Equation~\eqref{eq:5term} with $A$ as given, $B=\{t_1\}$, and $C=T\setminus \{t_1\}$.
\begin{align*}
\big[(A\cup \{t_1\} \mid \{t_2,\ldots,t_{m+1}\})\big]_r&=\sum_{S\subseteq T\setminus \{t_1\}}(-1)^{|S|}\big[(A\cup S \mid \{t_1\}\cup (T\setminus \{t_1\})\setminus S)\big]_r\\
&=\sum_{S\subseteq T\setminus \{t_1\}}(-1)^{|S|}\big[(A\cup S \mid T\setminus S ) \big]_r
\end{align*}
Then the term on the left side of the equality has a block of size $m$, so its invariant is in $S^{(2^r,1^{n-2r})}$ by induction. On the right side of the equality, when $S=\emptyset$, $(A\cup S\mid T\setminus S)=\pi$, the ordered set partition whose invariant $[\pi]_r$ we wish to show is in $S^{(2^r,1^{n-2r})}$. When $|S|>m-r+1$, $T\setminus S$ is of cardinality less than $m+1-(m-r+1)=r$, so $\big[(A\cup S\mid T\setminus S)\big]_r=0$. When $1\leq |S|\leq m-r+1\leq m$, the cardinality of $T\setminus S$ is at most $m$, so $\big[(A\cup S,T\setminus S)\big]_r$ is in $S^{(2^r,1^{n-2r})}$ by induction. Thus, we have an expression for $[\pi]_r=[(A\mid T)]_r$ in terms of elements of the Specht module. 

This completes the proof that $[\pi]_r$ is in the Specht module $S^{(2^r,1^{n-2 r})}$ for all $\pi\in\ordsetpart(n,2,r)$. The theorem then follows from Corollary~\ref{cor:twoblocks}.
\end{proof}

For any permutation $w \in \mathfrak{S}_n$ and any $B \subseteq \{1, \dots, n\}$, define $w \cdot B = \{ w(b) : b \in B \}$.
For any ordered set partition $\pi = (\pi_1\mid  \dots\mid \pi_d ) \in \ordsetpart(n,d,r)$, let $w \cdot \pi$ be the ordered set partition with blocks $( w \cdot \pi_1\mid w \cdot \pi_2\mid \dots\mid w \cdot \pi_d )$. Note that here we are permuting the elements of the partition, while in Lemma~\ref{lem:colswap}, we were permuting the blocks of the partition.

\begin{proposition}
\label{prop:any_perm}
For any ordered set partition $\pi \in \ordsetpart(n,d,r)$ and any permutation $w \in \mathfrak{S}_n$, we have 
\[
w \cdot [\pi]_r = \sgn(w) [w \cdot \pi]_r,
\]
where $\sgn(w)$ denotes the sign of the permutation $w$.
\end{proposition}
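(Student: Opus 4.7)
The plan is to reduce to the case of simple transpositions $w = s_i = (i, i+1)$ and then argue by a case split on whether $i$ and $i+1$ lie in the same block of $\pi$. Writing $w = s_{i_1} \cdots s_{i_\ell}$ and iterating, the simple-transposition statement implies $w \cdot [\pi]_r = (-1)^\ell [w \cdot \pi]_r = \sgn(w) [w \cdot \pi]_r$ by induction on $\ell$, so it suffices to show $s_i \cdot [\pi]_r = -[s_i \cdot \pi]_r$ for every $i$ and every $\pi \in \ordsetpart(n,d,r)$.

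In the \emph{same-block} case, $i, i+1 \in \pi_k$, so $s_i \cdot \pi = \pi$. In each term $\rspoly(T) = \prod_j M_{R_j(T)}^{\pi_j}$ the variables $x_{\ast, i}$ and $x_{\ast, i+1}$ appear only in the single factor $M_{R_k(T)}^{\pi_k}$, and because $i$ and $i+1$ are consecutive integers they occupy adjacent positions in the sorted column list $\pi_k$. Hence $s_i$ swaps two adjacent columns of this determinant, producing a sign of $-1$ and leaving all other factors fixed; summing over $T$ gives $s_i \cdot [\pi]_r = -[\pi]_r = -[s_i \cdot \pi]_r$, as required.

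In the \emph{different-block} case, $i \in \pi_k$ and $i+1 \in \pi_l$ with $k \neq l$. I would define a bijection $T \mapsto T'$ from $\rstab_r(\pi)$ to $\rstab_r(s_i \cdot \pi)$ by globally swapping the labels $i$ and $i+1$ while preserving the cell-occupancy pattern. This is well-defined because no element of $\pi_k$ or $\pi_l$ lies strictly between the consecutive integers $i$ and $i+1$, so each relabelled entry sits in exactly the sorted position formerly occupied by its counterpart in the same column. Consequently, the action of $s_i$ by column substitution sends each $M_{R_k(T)}^{\pi_k}$ directly to $M_{R_k(T')}^{(s_i \cdot \pi)_k}$ with no sign from reordering, so $s_i \cdot \rspoly(T) = \rspoly(T')$. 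On the other hand, $\word(T') = s_i \circ \word(T)$ as permutations of $[n]$, whence $\sgn(T') = -\sgn(T)$; re-indexing the defining sum yields
\[
s_i \cdot [\pi]_r = \sum_{T \in \rstab_r(\pi)} \sgn(T)\, \rspoly(T') = -\sum_{T' \in \rstab_r(s_i \cdot \pi)} \sgn(T')\, \rspoly(T') = -[s_i \cdot \pi]_r.
\]

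The main bookkeeping obstacle is the claim in the different-block case that applying $s_i$ to the individual determinants $M_{R_k(T)}^{\pi_k}$ and $M_{R_l(T)}^{\pi_l}$ introduces no residual sign from sorted-order rearrangement. This is the only place where the combinatorics of increasing order enters, but once confirmed the compensating factor of $-1$ from the reading word of $T'$ matches $\sgn(s_i) = -1$ exactly, and the passage from simple to general permutations is immediate by induction on the length of $w$.
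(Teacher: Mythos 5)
Your proposal is correct and follows essentially the same route as the paper's proof: reduce to simple transpositions, then split into the same-block case (an adjacent-column swap in one determinant gives the $-1$) and the different-block case (relabel $i \leftrightarrow i+1$ to get a sign-reversing bijection $\rstab_r(\pi) \to \rstab_r(s_i\cdot\pi)$). Your added justification that consecutiveness of $i$ and $i+1$ prevents any residual sorting sign is a correct elaboration of a step the paper leaves implicit.
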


\begin{proof}
Let $\pi=(\pi_1\mid \dots\mid \pi_d)\in\ordsetpart(n,d,r)$ and $w\in \mathfrak{S}_n$. By induction, it is enough to show the result for $w$ a simple transposition $s_i$. 

Consider $s_i$ acting on $\pi$. There are two cases: Either $i$ and $i+1$ are in the same block of $\pi$ or else they are not. 
 
First, suppose they are not in the same block. The $\pi$ and $s_i \cdot \pi$ differ by exchanging that pair of elements between blocks. For $T\in\rstab_r(\pi)$, let $\widetilde{T}$ denote the tableau obtained from $T$ by applying the permutation $s_i$, swapping the labels $i$ and $i+1$.
Observe that $\{ \widetilde{T} : T \in \rstab_r(\pi) \} = \rstab_r(s_i \cdot \pi)$.
Note further that $\sgn(T) = -\sgn(\widetilde{T})$ for all $T \in \rstab_r(\pi)$. 
Recall from Definition~\ref{def:polynomial} that \[[\pi]_r = \sum_{T\in \rstab_r(\pi)}\sgn(T) \; \rspoly(T).\]
Therefore, 
\[  s_i \cdot [\pi]_r  =   s_i \cdot \sum_{T\in \rstab_r(\pi)}\sgn(T) \; \rspoly(T) = \sum_{T\in \rstab_r(\pi)} \sgn(T) \; \rspoly(\widetilde{T}) = \sum_{\tilde{T}\in \rstab_r(s_i \cdot \pi)} -\sgn(\widetilde{T}) \; \rspoly(\widetilde{T}) = -[s_i \cdot \pi]_r,\] which yields the desired result in this case.

Now suppose $i$ and $i+1$ are in the same block of $\pi$. Then $\pi = s_i \cdot \pi$, so $[\pi]_r = [s_i \cdot \pi]_r$. On the other hand, $s_i \cdot [\pi]_r$ differs from $[\pi]_r$ by swapping two adjacent columns of exactly one determinant in each summand. Hence, $s_i \cdot [\pi]_r = - [\pi]_r = -[s_i\cdot\pi]_r$.
\end{proof}

\begin{corollary}
\label{prop:rotation}
Up to signs, the long cycle $c_n = n12\dots (n-1)$ acts by rotation and the long element $w_0$ acts by reflection.

Precisely, for any ordered set partition $\pi \in \ordsetpart(n,d,r)$, we have 
\[
c_n \cdot [\pi]_r = (-1)^{n-1} [\mathtt{rot}(\pi)]_r \quad \text{and} \quad w_0 \cdot [\pi]_r = (-1)^{\binom{n}{2}} [\mathtt{refl}(\pi)]_r,
\]
where $\mathtt{rot}$ denotes counterclockwise rotation by $(360/n)^\circ$ and $\mathtt{refl}$ denotes reflection across the diameter with endpoint halfway between vertices $n$ and $1$.
\end{corollary}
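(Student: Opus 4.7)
The plan is to invoke Proposition~\ref{prop:any_perm} directly with $w = c_n$ and with $w = w_0$; both claims then reduce to (i) computing the signs $\sgn(c_n)$ and $\sgn(w_0)$, and (ii) verifying the combinatorial identities $c_n \cdot \pi = \mathtt{rot}(\pi)$ and $w_0 \cdot \pi = \mathtt{refl}(\pi)$.

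For the long cycle, I would first observe that in one-line notation $c_n = n\, 1\, 2\, \cdots\, (n-1)$ means $c_n(1) = n$ and $c_n(i) = i-1$ for $2 \leq i \leq n$; since any $n$-cycle factors as a product of $n-1$ transpositions, $\sgn(c_n) = (-1)^{n-1}$. Proposition~\ref{prop:any_perm} then gives $c_n \cdot [\pi]_r = (-1)^{n-1}[c_n \cdot \pi]_r$. To finish, I would note that with vertices $1, 2, \ldots, n$ placed clockwise around the disk, counterclockwise rotation by $(360/n)^\circ$ moves the vertex at position $i$ to position $i-1$ (with $0 \equiv n$). Hence each block $B$ of $\pi$ is sent to $\{b - 1 \bmod n : b \in B\} = c_n \cdot B$, so $\mathtt{rot}(\pi) = c_n \cdot \pi$.

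For the long element $w_0 = n\, (n-1)\, \cdots\, 1$, we have $w_0(i) = n+1-i$ and $\inv(w_0) = \binom{n}{2}$, so $\sgn(w_0) = (-1)^{\binom{n}{2}}$. Proposition~\ref{prop:any_perm} then yields $w_0 \cdot [\pi]_r = (-1)^{\binom{n}{2}}[w_0 \cdot \pi]_r$. The reflection $\mathtt{refl}$ across the diameter bisecting the arc between vertices $n$ and $1$ sends vertex $i$ to vertex $n+1-i$, matching the action of $w_0$ on labels, so $\mathtt{refl}(\pi) = w_0 \cdot \pi$.

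There is no real obstacle here: the corollary is essentially just Proposition~\ref{prop:any_perm} specialized to two concrete permutations, together with the elementary sign computations and a direct geometric reading of how $c_n$ and $w_0$ act on the clockwise-labeled boundary of the disk.
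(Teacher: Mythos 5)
Your proposal is correct and matches the paper's proof, which likewise deduces the corollary directly from Proposition~\ref{prop:any_perm} together with the sign computations $\sgn(c_n)=(-1)^{n-1}$ and $\sgn(w_0)=(-1)^{\binom{n}{2}}$. The extra detail you supply identifying $c_n\cdot\pi$ with $\mathtt{rot}(\pi)$ and $w_0\cdot\pi$ with $\mathtt{refl}(\pi)$ is left implicit in the paper but is the same reasoning.
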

\begin{proof}
This follows from Proposition~\ref{prop:any_perm} by noting $\sgn(c_n)=(-1)^{n-1}$ and $\sgn(w_0)=(-1)^{\binom{n}{2}}$.
\end{proof}

Recall from Section~\ref{sec:set_partitions} that $\nc(n,d,r)$ denotes the set of noncrossing set partitions of $[n]$ into $d$ blocks all of size at least $r$.

\begin{theorem}\label{thm:linind}
Let $r> 1$. For each noncrossing set partition $\gamma \in \nc(n,d,r)$, order the blocks in any way to create a corresponding ordered set partition $\pi_\gamma$. Then the set $\{ [\pi_\gamma]_r : \gamma \in \nc(n,d,r) \}$ is linearly independent.
\end{theorem}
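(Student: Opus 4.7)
The plan is to prove linear independence by passing to the Grassmannian $\mathrm{Gr}(n, 2n)$ via the map $\Phi^*$ of Section~\ref{sec:jellyfish_on_Gr} and applying Standard Monomial Theory. By Equation~\eqref{eq:PlutoMinor}, each $\Phi^*([\pi_\gamma]_r)$ is a signed sum of degree-$d$ products of Pl\"ucker coordinates, and after Pl\"ucker straightening it admits a unique expansion in the standard monomial basis of $\mathbb{C}[\mathrm{Gr}(n,2n)]$. Linear independence will follow by producing, for each noncrossing $\gamma \in \nc(n, d, r)$, a ``witness'' standard monomial that appears in this expansion only for $\gamma$.

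For each noncrossing $\gamma$, I will associate the canonical jellyfish tableau $T_\gamma \in \rstab_r(\pi_\gamma)$ in which each block's $r$ smallest elements occupy the top $r$ rows and its remaining elements occupy tentacle rows in increasing order of value. The corresponding Pl\"ucker monomial $P_\gamma = \prod_i \Delta_{([n]\setminus R_i(T_\gamma)) \cup (\pi_i + n)}$ is injective in $\gamma$: its multiset of index sets recovers $\gamma$, since $J_i \cap [n+1, 2n] = \pi_i + n$ reads off each block. When $P_\gamma$ is standard (its index sets form a componentwise chain), it serves as the witness directly. Otherwise, I straighten via the classical Pl\"ucker relation (Equation~\eqref{eq:Fulton}, which also underlies Theorem~\ref{thm:5term}) and choose the witness to be a standard monomial in the expansion that corresponds, under the same bijection between Pl\"ucker multisets and set partitions, to a \emph{crossing} partition --- such a monomial cannot be the canonical monomial of any noncrossing $\gamma'$.

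The main obstacle is verifying that witnesses can be chosen consistently across all of $\nc(n, d, r)$: one must rule out the scenario where two distinct noncrossing partitions with non-standard canonical monomials straighten to share crossing witnesses. A careful combinatorial analysis of Pl\"ucker straightening, perhaps aided by the jellyfish patterns $\jp_T$ introduced in the proof of Proposition~\ref{prop:jellyfish_and_GC} and by inductive use of Theorem~\ref{thm:5term} to reduce to simpler partitions, should force distinct noncrossing $\gamma$ to produce distinct witnesses. The hypothesis $r > 1$ will be essential here: for $r = 1$, the extra flexibility in tentacle placements discussed in Section~\ref{sec:hook} causes witnesses to collide, which is precisely the reason the hook case yields only a spanning set rather than a linearly independent collection.
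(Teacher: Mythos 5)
There is a genuine gap. Your strategy (pass to $\mathrm{Gr}(n,2n)$, expand in the standard monomial basis, and exhibit an injective ``witness'' monomial for each $\gamma$) founders exactly at the step you flag as the main obstacle, and that step is not a technicality one can wave at. Two specific problems. First, your mechanism for choosing witnesses after straightening is not well defined: a general standard Pl\"ucker monomial in $\mathbb{C}[\mathrm{Gr}(n,2n)]$ has index sets that mix $[n]$ and $[n+1,2n]$ arbitrarily, so it need not be of the form $([n]\setminus R_i)\cup(\pi_i+n)$ for any set partition at all, and the claimed ``same bijection between Pl\"ucker multisets and set partitions'' simply does not extend to the straightened terms. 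Second, even if a witness could be named, $[\pi_\gamma]_r$ is a signed sum over \emph{all} tableaux in $\rstab_r(\pi_\gamma)$, not just your canonical $T_\gamma$; you would additionally have to rule out cancellation of the witness against the straightened contributions of the other tableaux. Without both of these, no injectivity statement has been established, so linear independence does not follow. (Your closing remark about $r=1$ is also off: in the paper the hook case fails to give a basis because $|\nc(n,d,1)|$ exceeds $\dim S^{(d,1^{n-d})}$, not because of witness collisions in a straightening argument; indeed Theorem~\ref{theorem:requals1} extracts a linearly independent subfamily by the very same leading-term method.)

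The paper's proof avoids all of this by never leaving $\mathbb{C}[M]$ and never straightening. It fixes a lexicographic term order on the variables $x_{ij}$ in which row~$1$ is ordered $x_{1,1}>\cdots>x_{1,n}$ but row~$2$ is ordered in \emph{reverse}, $x_{2,n}>\cdots>x_{2,1}$. With this order, the leading monomial of $[\pi_\gamma]_r$ picks out, for each block, the variable $x_{1,i}$ with $i$ the minimum of the block and the variable $x_{2,j}$ with $j$ the maximum of the block; since a noncrossing partition with all blocks of size at least $2$ is determined by the multiset of (min, max) pairs of its blocks, the leading monomials are pairwise distinct and linear independence is immediate. If you want to salvage your approach, the honest fix is to replace the straightening machinery with a concrete term order and prove a leading-term statement; as written, the argument asserts rather than proves the injectivity on which everything depends.
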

\begin{proof}
Let $k = n - r d + r$, and let $\gamma\in\nc(n,d,r)$. Order the monomials in $[\pi_\gamma]_r$ under the lexicographic order with 
\begin{align*} &x_{1,1} > x_{1,2} > \cdots > x_{1,n}  \\
> &x_{2,n} > x_{2,n-1} > \cdots >x_{2,1}\\
> &x_{3,1}>x_{3,2} >\cdots>x_{3,n}\\
&\phantom{asasdfdsf} \vdots\\
> &x_{k,1}>x_{k,2} >\cdots > x_{k,n}.
\end{align*}
(Note that the second row here is ordered differently from the others.)

Consider the leading monomial of $[\pi_\gamma]_r$ under this term order. The factors of the form $x_{1,i}$ in the leading monomial come from the smallest element of each block, and the factors of the form $x_{2,i}$ come from the largest element of each block. This information uniquely determines the noncrossing set partition $\gamma$.
Thus, each element of the set $\{ [\pi_\gamma]_r : \gamma \in \nc(n,d,r) \}$ is a polynomial where each term has a different leading monomial. 

Suppose $\{\gamma^1,\ldots,\gamma^m\}\subseteq \nc(n,d,r)$ and \[0=a_1[\pi_\gamma^1]_r+\cdots+a_m[\pi_\gamma^m]_r.\]
One of these $[\pi_\gamma^i]_r$ has the largest leading monomial under the lexicographic ordering, and none of the other invariants contain this monomial. It follows inductively that $a_1=\dots=a_m=0$.
\end{proof}

\subsection{Hook Specht modules}\label{sec:hook}

In this subsection, we give a method to construct a diagrammatic basis for the \emph{hook Specht module} $S^{(d,1^{n-d})}$. The following lemma will be useful.

\begin{lemma}\label{lem:r=1_exists}
For each $P\subseteq \{2,\ldots,n\}$ with $|P| = d-1$, there exists a noncrossing ordered set partition $\pi = (\pi_1| \dots | \pi_d) \in \ncop(n,d,1)$ such that the smallest element in each block $\pi_i$ lies in the set $\{1\}\cup P$.
\end{lemma}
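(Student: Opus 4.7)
The plan is to exhibit an explicit such $\pi$, built out of consecutive integer intervals. Given $P \subseteq \{2,\ldots,n\}$ with $|P| = d-1$, set $Q = \{1\} \cup P$ and write its elements in increasing order as $q_1 < q_2 < \cdots < q_d$ (so in particular $q_1 = 1$). Adopt the convention $q_{d+1} = n+1$.

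Then define blocks by
\[
B_i \;=\; \{\, j \in [n] : q_i \leq j < q_{i+1} \,\} \qquad (i=1,\ldots,d),
\]
and let $\pi = (B_1 \mid B_2 \mid \cdots \mid B_d)$ with the blocks in this order. I would then verify the four required features in turn:
(i) each $B_i$ is nonempty, since $q_i \in B_i$;
(ii) the $B_i$ partition $[n]$, as the intervals $[q_i, q_{i+1}-1]$ are pairwise disjoint with union $[q_1, n] = [n]$;
(iii) the smallest element of $B_i$ is $q_i \in \{1\} \cup P$, as required;
(iv) the underlying unordered set partition is noncrossing because each block is a (contiguous) integer interval, so the convex hulls of the boundary vertices labelled by distinct blocks are disjoint arcs of the boundary circle.

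Since we have produced $d$ nonempty blocks partitioning $[n]$, with underlying set partition noncrossing and blocks of size $\geq 1 = r$, we have $\pi \in \ncop(n,d,1)$. There is no real obstacle here: the construction is completely explicit, and the lemma is essentially a repackaging of the elementary fact that specifying the block minima of a noncrossing set partition leaves at least one valid choice (namely, the one where each block is an interval). I would keep the proof to a single short paragraph presenting this construction and the four verifications.
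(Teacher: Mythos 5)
Your construction is exactly the one in the paper: take the elements of $\{1\}\cup P$ in increasing order as block minima and let each block be the integer interval from one minimum up to (but not including) the next, with the sentinel $n+1$ at the end. The proof is correct and matches the paper's argument; your extra verifications (i)--(iv) are fine but not strictly necessary, as the paper simply notes the result holds ``by construction.''
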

\begin{proof}
We can construct such a $\pi$ as follows. Let $\{1\}\cup P = \{1 = p_1 < p_2 < \dots < p_d \}$ and let $p_{d+1} = n+1$. For $1 \leq i \leq d$, let $\pi_i = \{p_{i}, p_i + 1, \dots, p_{i+1} - 1 \}$. Thus, $\pi$ is a noncrossing ordered set partition with $d$ blocks by construction.
\end{proof}

For each subset $P \subseteq \{2,\ldots,n\}$ of size $d-1$, choose a noncrossing ordered set partition $\pi^P = (\pi_1 | \dots | \pi_d) \in \ncop(n,d,1)$ such that the smallest element in each block $\pi_i$ lies in the set $\{1\}\cup P$. Such a partition exists by Lemma~\ref{lem:r=1_exists}, although it may not be unique. For each $P$, we may make an arbitrary choice of $\pi^P$. Let $\mathcal{H}  = \{\pi^P : P \subseteq \{2,\ldots,n\} \text{ and } |P| = d-1 \}$, a set of $\binom{n-1}{d-1}$ noncrossing partitions.

\begin{theorem}\label{theorem:requals1}
Construct a subset $\mathcal{H}$ of $\ncop(n,d,1)$ as described above. Then the set $\{[\pi]_1 : \pi \in \mathcal{H}\}$ of jellyfish invariants forms a basis for the hook Specht module $S^{(d,1^{n-d})}$. 
\end{theorem}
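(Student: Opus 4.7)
The plan is to combine Theorem~\ref{thm:inspecht} with a dimension count and a leading-monomial argument in the spirit of Theorem~\ref{thm:linind}. By Theorem~\ref{thm:inspecht}, each $[\pi]_1$ lies in $S^{(d,1^{n-d})}$. The hook-length formula gives $\dim S^{(d,1^{n-d})}=\binom{n-1}{d-1}$, and this coincides with $|\mathcal{H}|$ since $\mathcal{H}$ is indexed by the $(d-1)$-subsets $P\subseteq\{2,\ldots,n\}$. Thus it suffices to prove that $\{[\pi]_1:\pi\in\mathcal{H}\}$ is linearly independent.

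For linear independence, I would equip the variables with the lex order $x_{i,j}\succ x_{i',j'}$ iff $i<i'$, or $i=i'$ and $j<j'$, and extend to the induced monomial order. A direct computation shows that, for each factor $M^{\pi_j}_{R_j(T)}$ of $\rspoly(T)$, the leading monomial is the identity-diagonal term $\prod_k x_{r_k,p_k}$ of the Leibniz expansion (where the rows $r_k$ and columns $p_k$ are listed in increasing order), so that $\mathrm{LM}(\rspoly(T))=\prod_{(i,j)\in T}x_{i,T_{i,j}}$ with leading coefficient $+1$. Since $T$ can be recovered from this leading monomial (the variable $x_{i,a}$ records that entry $a$ lies in row $i$), distinct $T\in\rstab_1(\pi)$ give distinct leading monomials, and hence no cancellation occurs in the signed sum defining $[\pi]_1$; in particular, $\mathrm{LM}([\pi]_1)=\mathrm{LM}(\rspoly(T^*))$ for a unique maximizer $T^*\in\rstab_1(\pi)$, with coefficient $\sgn(T^*)=\pm1$.

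The crucial observation is that every $T\in\rstab_1(\pi)$ places $\min(\pi_j)$ in cell $(1,j)$, so the row-$1$ factor of $\mathrm{LM}([\pi]_1)$ is always $\prod_{j=1}^d x_{1,\min(\pi_j)}$, from which one reads off the set $\{1\}\cup P_\pi$ of smallest block entries. By the construction of $\mathcal{H}$, the map $\pi\mapsto P_\pi$ is injective on $\mathcal{H}$, so the leading monomials $\mathrm{LM}([\pi]_1)$ for $\pi\in\mathcal{H}$ are pairwise distinct. A standard leading-monomial argument --- repeatedly isolate the largest leading monomial appearing in a hypothetical dependence $\sum_\pi c_\pi[\pi]_1=0$, observe that it is not matched by any other $[\pi]_1$ in the sum, and deduce that its coefficient must vanish --- then establishes linear independence. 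The main obstacle is verifying that the leading monomials are distinct and do not cancel, and both issues are handled by the bijection between elements of $\mathcal{H}$ and their first-element sets that is built into the definition of $\mathcal{H}$.
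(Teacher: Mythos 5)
Your proposal is correct and follows essentially the same route as the paper: membership via Theorem~\ref{thm:inspecht}, the dimension count $\binom{n-1}{d-1}=|\mathcal{H}|$, and linear independence by showing the leading monomials are distinguished by the row-$1$ variables $x_{1,\min(\pi_j)}$, i.e.\ by the set $\{1\}\cup P$. The only (immaterial) difference is that you use plain lex order on the variables while the paper reuses the term order from Theorem~\ref{thm:linind} (which reverses row~$2$); for $r=1$ the distinguishing data lives entirely in row~$1$, so both orders work, and your write-up actually supplies more detail than the paper's.
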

\begin{proof}
By Theorem~\ref{thm:inspecht}, each $[\pi]_1$ is in the hook Specht module $S^{(d,1^{n-d})}$. Using the lexicographic ordering of monomials given in the proof of Theorem~\ref{thm:linind}, each jellyfish invariant $\{[\pi]_1 : \pi\in\mathcal{H}\}$ has a different leading term. It follows that this set of polynomials is linearly independent. The result then follows from the observation that the dimension of $S^{(d,1^{n-d})}$ is the number of standard Young tableaux of shape $(d,1^{n-d})$, which is easily seen to be $\binom{n-1}{d-1}=|\mathcal{H}|$.
\end{proof}

We note that Theorem~\ref{thm:5term} specializes to a three-term recurrence in the case $r=1$. 

\begin{corollary}
\label{cor:3term}
Partition $\{1,\ldots n\}$ into three nonempty sets: $A$, $B$, and $C$ where $|C|=1$. Then 
\begin{equation}
\label{eq:3term}
\big[(A\cup B \mid C)\big]_1+\big[(A\cup C \mid B)\big]_1+\big[(B\cup C \mid A)\big]_1=0.
\end{equation}
\end{corollary}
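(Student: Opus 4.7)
The plan is to obtain Corollary~\ref{cor:3term} as a direct specialization of Theorem~\ref{thm:5term} combined with the block-swap sign formula. Since $|C|=r=1$, the only subsets of $C$ are $\emptyset$ and $C$ itself, so the $2^r+1 = 3$ terms on the right-hand side of Equation~\eqref{eq:5term} collapse to a two-term sum:
\[
[(A\cup B \mid C)]_1 \;=\; [(A \mid B\cup C)]_1 \;-\; [(A\cup C \mid B)]_1.
\]

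Next, I would rewrite the term $[(A\mid B\cup C)]_1$ using the block-swap statement. Specifically, $(A\mid B\cup C)$ and $(B\cup C\mid A)$ are related by the transposition $\sigma\in\mathfrak{S}_2$ acting on block positions, so the corollary to Lemma~\ref{lem:row_col_swap} (which combines Lemma~\ref{lem:colswap} with Lemma~\ref{lem:row_col_swap}) yields
\[
[(A\mid B\cup C)]_1 \;=\; \sgn(\sigma)^1\,[(B\cup C\mid A)]_1 \;=\; -[(B\cup C\mid A)]_1.
\]
Substituting and rearranging produces the desired identity
\[
[(A\cup B \mid C)]_1 + [(A\cup C \mid B)]_1 + [(B\cup C \mid A)]_1 = 0.
\]

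There is no real obstacle here: the argument is purely formal, exploiting only the fact that the sum in Theorem~\ref{thm:5term} indexed by $S\subseteq C$ is minimal when $|C|=1$, together with the $r=1$ specialization of the block-swap sign. The only minor point worth being careful about is the direction of the sign in the block swap; since $r$ is odd, swapping two blocks flips the sign of the jellyfish invariant, which is exactly what is required to turn the two-term recurrence from Theorem~\ref{thm:5term} into the symmetric three-term relation stated in the corollary.
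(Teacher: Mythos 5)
Your proof is correct and matches the paper's argument essentially verbatim: both specialize Theorem~\ref{thm:5term} to the two-term sum over $S\subseteq C$ with $|C|=1$, then flip $[(A\mid B\cup C)]_1$ to $-[(B\cup C\mid A)]_1$ via the block-swap sign and rearrange. Your explicit invocation of Lemma~\ref{lem:colswap} together with Lemma~\ref{lem:row_col_swap} is, if anything, slightly more careful than the paper's citation of the latter alone.
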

\begin{proof}
Theorem~\ref{thm:5term} with $r=1$ gives
$\big[(A\cup B \mid C)\big]_1=\big[(A\mid B\cup C)\big]_1-\big[(A\cup C\mid B)\big]_1$. Now, using Lemma~\ref{lem:row_col_swap}, switching the order of the two blocks yields a negative sign. So \[
\big[(A\mid B\cup C)\big]_1=-\big[(B\cup C \mid A)\big]_1.
\]
Bringing all terms to one side completes the proof.
\end{proof}

Using this recurrence, we obtain relations that look similar to the skein relations of the $r=2$ case. Since there are more noncrossing partitions than basis elements, one can resolve crossings in multiple ways. We can also use the recurrence to see diagrammatically (cf.\ Figure~\ref{fig:lin_dep}) the linearly dependence of noncrossing set partitions.

\begin{figure}[htbp]
    \centering
    \includegraphics[width=\textwidth]{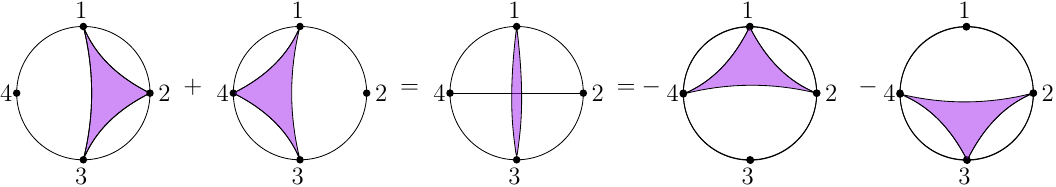}\\
    \vspace{.1in}
\includegraphics[scale=.9]{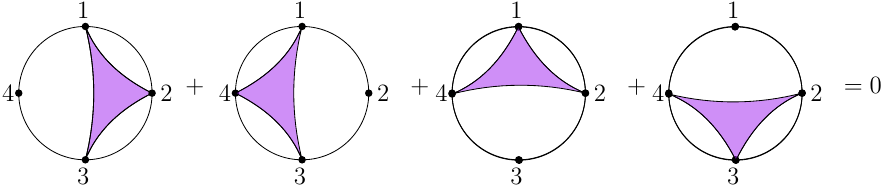}
    \caption{The top image shows how to resolve a crossing in two different ways when $r=1$. The second image shows a linear dependence among noncrossing partitions when $r=1$. In both images, the block colored purple is the first block of the ordered set partition.}
    \label{fig:lin_dep}
\end{figure}

\subsection{Final remarks on relations among jellyfish invariants}\label{sec:final}

We can use the recurrence of Theorem~\ref{thm:5term} to obtain further relations between jellyfish invariants of ordered set partitions, such as the following, where the first block is purple and the second block is white.

\begin{center}
    \includegraphics[width=\textwidth]{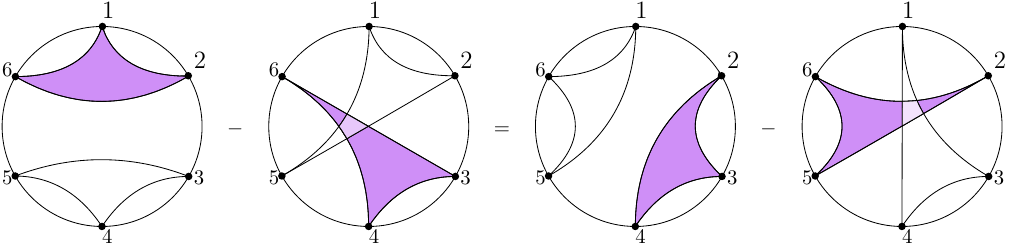}
\end{center}

Towards constructing web bases for general flamingo Specht modules, we conjecture the following extension of Theorem~\ref{thm:linind}.
For $r = 3$, the set in question is the set of noncrossing partitions, and hence follows from Theorem~\ref{thm:linind}. We have verified the conjecture for some higher $r$ by computer calculation.
\begin{conjecture}\label{conj:more_linind}
Let $r \geq 3$ and let $\mathscr{S} \subset \setpart(n,d,r)$ denote the collection of set partitions that can be made noncrossing by the application of at most $r-3$ adjacent transpositions. For each set partition $\gamma \in \mathscr{S}$, order the blocks in any way to create a corresponding ordered set partition $\pi_\gamma$. Then the set $\{ [\pi_\gamma]_r : \gamma \in \mathscr{S} \}$ is linearly independent.
\end{conjecture}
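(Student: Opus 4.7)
The plan is to extend the leading-monomial technique from the proof of Theorem~\ref{thm:linind}. I would define a refined lexicographic order on monomials in $\mathbb{C}[M]$ in which the first $r-1$ rows of $M$ carry alternating orientations: row $1$ uses $x_{1,1} > x_{1,2} > \cdots > x_{1,n}$, row $2$ uses the reverse $x_{2,n} > x_{2,n-1} > \cdots > x_{2,1}$, row $3$ reverts to normal, and so on through row $r-1$, with the remaining rows placed after these in a fixed order. Under this term order, the leading contribution of the minor $M_{R_i}^{\pi_i}$ pairs row $1$ with the smallest element of $\pi_i$, row $2$ with the largest, row $3$ with the second-smallest, row $4$ with the second-largest, and so on, working outward from both extremes. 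As in the proof of Theorem~\ref{thm:linind}, the contribution from the tentacle rows does not interfere with the lex comparison on the top rows, so the leading monomial of $[\pi_\gamma]_r$ is determined by the \emph{extremal profile} of each block, namely its $\lceil (r-1)/2 \rceil$ smallest and $\lfloor (r-1)/2 \rfloor$ largest elements.

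The crux of the argument is to show that this extremal-profile data determines $\gamma$ uniquely within $\mathscr{S}$. I would try to establish this by induction on the number $k \leq r-3$ of adjacent label transpositions needed to take $\gamma$ to a noncrossing partition. The base case $k=0$ is essentially Theorem~\ref{thm:linind}, since additional extremal data can only refine the reconstruction of a noncrossing partition. For the inductive step, the key observation is that the only effect of $s_i$ on a block $A$ containing $i$ (with $i+1 \in B \neq A$) is to replace the label $i$ by $i+1$ at the same internal rank, so a sequence of at most $r-3$ such transpositions perturbs the extremal profile only in predictable positions. One would then argue that both the underlying noncrossing partition and the specific sequence of toggles can be read back from the profiles.

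I expect this inductive step to be the main obstacle: a sequence of transpositions can cascade and potentially promote what was an interior element of one block to an extremal position of a neighboring block, and tracking these cascades without loss of information is delicate. If the direct extremal-profile strategy fails, I would fall back on the $(2^r+1)$-term recurrence of Theorem~\ref{thm:5term}, using it to express each crossing $[\pi_\gamma]_r$ as a signed combination of invariants for partitions strictly closer to noncrossing, and checking that the resolved terms form a triangular system relative to the distance-from-noncrossing order. Once injectivity of $\gamma \mapsto \mathrm{LM}([\pi_\gamma]_r)$ on $\mathscr{S}$ is established by either route, linear independence follows by the standard triangular argument of Theorem~\ref{thm:linind}: in any hypothetical dependence $\sum_\gamma c_\gamma [\pi_\gamma]_r = 0$, the lexicographically largest leading monomial appears in exactly one summand, forcing that coefficient to vanish, and the argument iterates.
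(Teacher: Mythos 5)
This statement is Conjecture~\ref{conj:more_linind}, and the paper does not prove it: the authors only note that the $r=3$ case reduces to Theorem~\ref{thm:linind} (since $\mathscr{S}$ is then exactly the set of noncrossing partitions), report computer verification for some higher $r$, and suggest that the recurrence of Theorem~\ref{thm:5term} may be useful. So there is no proof in the paper to compare against, and your proposal, as written, is a research plan rather than a proof. You concede as much yourself: the step you identify as ``the crux'' --- that the extremal profile (the $\lceil (r-1)/2\rceil$ smallest and $\lfloor (r-1)/2\rfloor$ largest elements of each block) determines $\gamma$ uniquely within $\mathscr{S}$ --- is exactly the part you do not establish, and your description of the inductive step already flags the cascading-promotion problem that would defeat a naive induction. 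Until that injectivity claim is proved, the triangularity argument has nothing to stand on.

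There are two further gaps worth naming concretely. First, even granting the term order, you must show that the putative leading monomial of $[\pi_\gamma]_r$ is actually attained with nonzero coefficient: $[\pi_\gamma]_r$ is a signed sum over all jellyfish tableaux $T\in\rstab_r(\pi_\gamma)$ of products of minors, and one must rule out cancellation of the maximal monomial among different~$T$ (in the proof of Theorem~\ref{thm:linind} this is implicit because the leading data involves only rows $1$ and $2$; with $r-1$ rows in play and tentacle rows entering the minors $M_{R_i(T)}^{\pi_i}$, it requires a separate argument). Second, your fallback --- resolving crossings via Theorem~\ref{thm:5term} into partitions ``closer to noncrossing'' and hoping for a triangular system --- is precisely the heuristic the authors themselves offer at the end of Section~\ref{sec:final} without being able to complete it; the recurrence produces $2^r$ terms with signs, and there is no argument given (by you or by the paper) that the resulting system is unitriangular with respect to any distance-from-noncrossing order. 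The paper's footnote indicates the conjecture was subsequently addressed by other work, but within this paper the statement remains open, and your proposal does not close it.
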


We suspect that the recurrence of Theorem~\ref{thm:5term} will be useful in approaching Conjecture~\ref{conj:more_linind}, as it allows one to rewrite the polynomial for an ordered set partition by a linear combination of polynomials for ordered set partitions that are closer to noncrossing.\footnote{After this preprint was posted, Jesse Kim built upon the ideas of this manuscript and found a basis for flamingo Specht modules in terms of jellyfish invariants of \emph{$r$-weakly noncrossing set partitions} \cite{Kim:3row}.}

It would be convenient if for $r>1$ and $\pi \in \ordsetpart(n,d,r)$, the set of $r$-jellyfish polynomials of rotations of $\pi$ were linearly independent. However, this is not true. For example, consider $\pi = ( 1 \ 2 \ 3 \ 5 \ | \ 4 \ 6 )$ and $r=2$. Then the rotation orbit of $\pi$ consists of $6$ distinct set partitions. However, one may check that the corresponding $2$-jellyfish invariants span only a $5$-dimensional space.

\section*{Acknowledgements}
We thank Ben Elias, Christian Gaetz, Stephan Pfannerer, Brendon Rhoades, Joshua P.\ Swanson, and Bruce Westbury for helpful conversations. We are grateful to Ardea Thurston-Shaine for contributing the illustrations of flamingos and jellyfish in Figures~\ref{fig:flamingo} and~\ref{fig:cute_jellyfish}. We are also very grateful to the referees, who made many  helpful suggestions including the great idea to prove Theorem~\ref{thm:5term} via Pl\"ucker relations.

OP acknowledges support from NSERC Discovery Grant RGPIN-2021-02391 and Launch Supplement DGECR-2021-00010. JS acknowledges support from Simons Foundation/SFARI grant (527204, JS) and NSF grant DMS-2247089.

\bibliographystyle{amsalphavar}
\bibliography{increasingwebs}
\end{document}